\newtheorem{theorem}{Theorem}[section]
\newtheorem{lemma}[theorem]{Lemma}
\newtheorem{fact}[theorem]{Fact}
\newcommand{\I}{\mathcal I}
\newcommand{\Z}{\mathbb{Z}}
\newcommand{\R}{\mathbb{R}}
\DeclareMathOperator*{\E}{\mathbb{E}}
\newcommand{\sign}{\text{sign}}
\newcommand{\lam}{\lambda}
\newcommand{\eps}{\varepsilon}
\newcommand{\1}{\mathbbm{1}}
\newcommand{\abs}[1]{|#1| }
\newcommand{\wh}{\widehat}
\newcommand{\from}{\leftarrow}
\renewcommand{\d}{\mathrm{d}}
\DeclarePairedDelimiterX{\kl}[2]{D_{\mathrm{KL}}(}{)}{%
  #1\,\delimsize\|\,#2%
}
\newcommand{\lambdahat}{\hat{\lambda}}
\newcommand{\Normal}{\mathcal{N}}
\newcommand{\Exp}{\mathbb{E}}
\newcommand{\IQR}{\mathrm{IQR}}
\newcommand{\DTV}{d_\mathrm{TV}}
\newcommand{\DKL}{D_\mathrm{KL}}
\renewcommand{\DH}{d_\mathrm{H}}
\renewcommand{\vec}{}
\title{Finite-Sample Maximum Likelihood Estimation of Location}
\author{%
  Shivam Gupta\\
  The University of Texas at Austin\\
  \texttt{shivamgupta@utexas.edu} \\
   \And
   Jasper C.H.~Lee \\
   University of Wisconsin--Madison \\
   \texttt{jasper.lee@wisc.edu} \\
   \And
   Eric Price \\
   The University of Texas at Austin \\
   \texttt{ecprice@cs.utexas.edu} \\
   \And
   Paul Valiant \\
   Purdue University \\
   \texttt{pvaliant@gmail.com} \\
}
\begin{document}

\maketitle

\begin{abstract}
  We consider 1-dimensional location estimation, where we estimate a
  parameter $\lambda$ from $n$ samples $\lambda + \eta_i$, with each $\eta_i$
  drawn i.i.d.~from a known distribution $f$.  For fixed $f$ the maximum-likelihood estimate (MLE) is well-known
  to be optimal \emph{in the limit} as $n \to \infty$: it is asymptotically normal with variance matching
  the Cram\'er-Rao lower bound of $\frac{1}{n\I}$, where $\I$ is the
  Fisher information of $f$.  However, this bound does not hold
  for finite $n$, or when $f$ varies with $n$.  We show for arbitrary
  $f$ and $n$ that one can recover a similar theory based on the
  Fisher information of a \emph{smoothed} version of $f$, where the
  smoothing radius decays with $n$.
\end{abstract}

\section{Introduction}

We revisit a fundamental problem in statistics: consider a
translation-invariant parametric model
$\{f^{\theta}\}_{\theta \in \R}$ of distributions, where
$f^\theta(x) = f(x-\theta)$.  Suppose there is an arbitrarily chosen
unknown true parameter $\lambda$, and we get i.i.d.~samples from
$f^{\lambda}$.  The task is to accurately estimate $\lambda$ from the
samples.  This problem is known as \emph{location parameter}
estimation in the statistics literature.

Location estimation is a well-studied and general model, including as
a special case the important setting of Gaussian mean estimation.  In
contrast to general mean estimation (where we want to estimate the
mean of a distribution given minimal assumptions such as moment
conditions), in location estimation we are given the shape of the
distribution up to shift.  This advantage lets us handle some
distributions where mean estimation is impossible (e.g., the mean may
not exist), and lets us aim for higher accuracy than is possible
without knowing the distribution.



The classic theory of location estimation is asymptotic,
see~\cite{vanDerVaart:2000asymptotic} for a detailed background.  On
the algorithmic side, it is well-known that the maximum likelihood
estimator (MLE) is asymptotically normal.  Specifically, as the number
of samples $n$ tends to infinity, the distribution of the MLE
converges to a Gaussian centered at the true parameter with variance
$1/(n\I)$, where $\I$ is the \emph{Fisher information} of the
distribution $f$:
\begin{align}\label{eq:fisher}
  \I := \int \frac{(f'(x))^2}{f(x)} \,\d x = \E_{x \sim f}\left[ (\frac{\partial}{\partial x} \log f(x))^2\right].
\end{align}
Conversely, the celebrated Cram\'{e}r-Rao bound states that the
variance of any unbiased estimator must be at least $1/(n\I)$, meaning
that the MLE has mean-squared error that is asymptotically at least as
good as any unbiased estimator.


In the last few decades, motivated by an increasing dependence on data for high-stakes applications, the statistics and computer science communities have shifted focus towards \emph{finite-sample} and \emph{high probability} theories:
1) asymptotic theories assume access to an infinite amount of data, and can in certain cases fail to predict the performance of an algorithm with only a finite number of samples---see the next section for bad examples for the MLE---
2) in high-stakes applications where failure can be catastrophic, it is crucial for predictions to hold except with exponentially small probability.
Yet, classic results bounding the variance or mean-squared error of estimators, such as the Cram\'{e}r-Rao bound, do not readily translate to (tight) high probability bounds.



The goal of this paper is to establish a finite-sample and high
probability theory for the location estimation problem, in both the
algorithmic bound and the estimation lower bound.  Our algorithmic
theory includes a simple yet crucial modification of perturbing
samples by Gaussian noise---corresponding to drawing samples from a
Gaussian-smoothed version of the underlying distribution---before
performing MLE.  We show that this smoothed MLE has finite-$n$
high-probability performance analogous to the Gaussian tail in the
classic asymptotic theory, but replacing the usual Fisher information
with a \emph{smoothed Fisher information}.  The amount of smoothing
required decreases with $n$.


Complementing our upper bound result, we prove a high probability
version of the Cram\'{e}r-Rao bound for Gaussian-smoothed
distributions, showing that for these distributions our sub-Gaussian
accuracy bound, with variance determined by the Fisher information, is
optimal to within a $1+o(1)$ factor.

\subsection{Obstacles to a Finite Sample Theory}
\label{sec:obstacles}

Before discussing our results in detail, we examine two simple distributions where the asymptotic theory predictions for the MLE do not hold in finite samples.
The first example highlights an information-theoretic barrier, that no algorithm can attain the performance predicted by the Gaussian with variance $1/\I$.
The second example, on the other hand, demonstrates how the MLE can be ``tricked" by the distribution, and how an algorithmic remedy is needed to improve its accuracy.


\paragraph{Gaussian with sawtooth noise}
Our first example takes a standard Gaussian, and adds a fine-grained
sawtooth perturbation to it over a bounded region at the center of the
Gaussian, as shown in Figure~\ref{fig:sawtooth}.  The fine-grained
sawtooth has slope either $+\Delta$ or $-\Delta$, alternating over
``teeth'' of width $w$, for $\Delta \gg 1$ and $w \ll 1/\Delta$.  This
sawtooth perturbation barely changes the pdf, but significantly
changes its derivatives, so the Fisher information  $\int (f')^2/f$
grows from $1$ to $\Theta(\Delta^2)$.

Essentially, the sawtooth perturbation makes the distribution easier
to align \emph{within} a tooth, but not \emph{across} teeth. The
asymptotic analysis reflects that: for $n \gg 1/w^2$ where we can
align the teeth correctly, the MLE is much more accurate on the
perturbed distribution.  But for $n \ll 1/w^2$, no algorithm can do
better on the perturbed distribution than on a regular
Gaussian.\footnote{This can be shown by the KL divergence from
  shifting an integer number of teeth of distance about $1/\sqrt{n}$.}
Thus, the normalized estimation accuracy depends on $n$: for large
enough $n$, it has variance $\frac{\Theta(1)}{\Delta^2 n}$, but for
smaller $n$ it has variance $\frac{1}{n}$.  Our finite sample theory
should reflect this.

\begin{figure}
    \begin{subfigure}[b]{.65\textwidth}
        \centering
        \hspace*{-1.5cm}
        \includegraphics[trim={2.5cm 0cm .5cm .5cm},clip,valign=m,width=.25\textwidth]{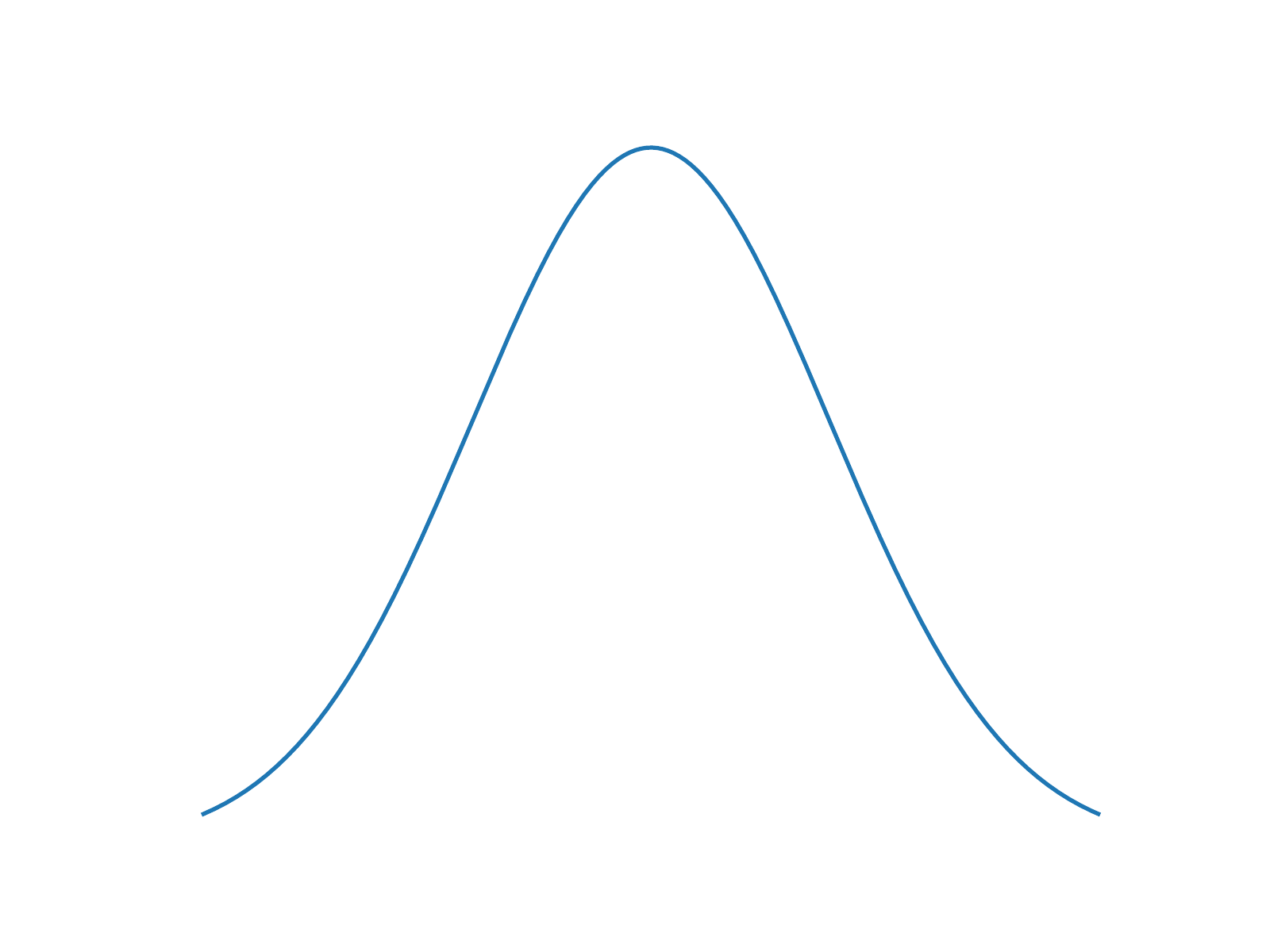}\hspace*{-0.5cm}
        \Huge$+$\hspace*{0cm}
        \includegraphics[trim={2.5cm 0cm .5cm .5cm},clip,valign=m,width=.25\textwidth]{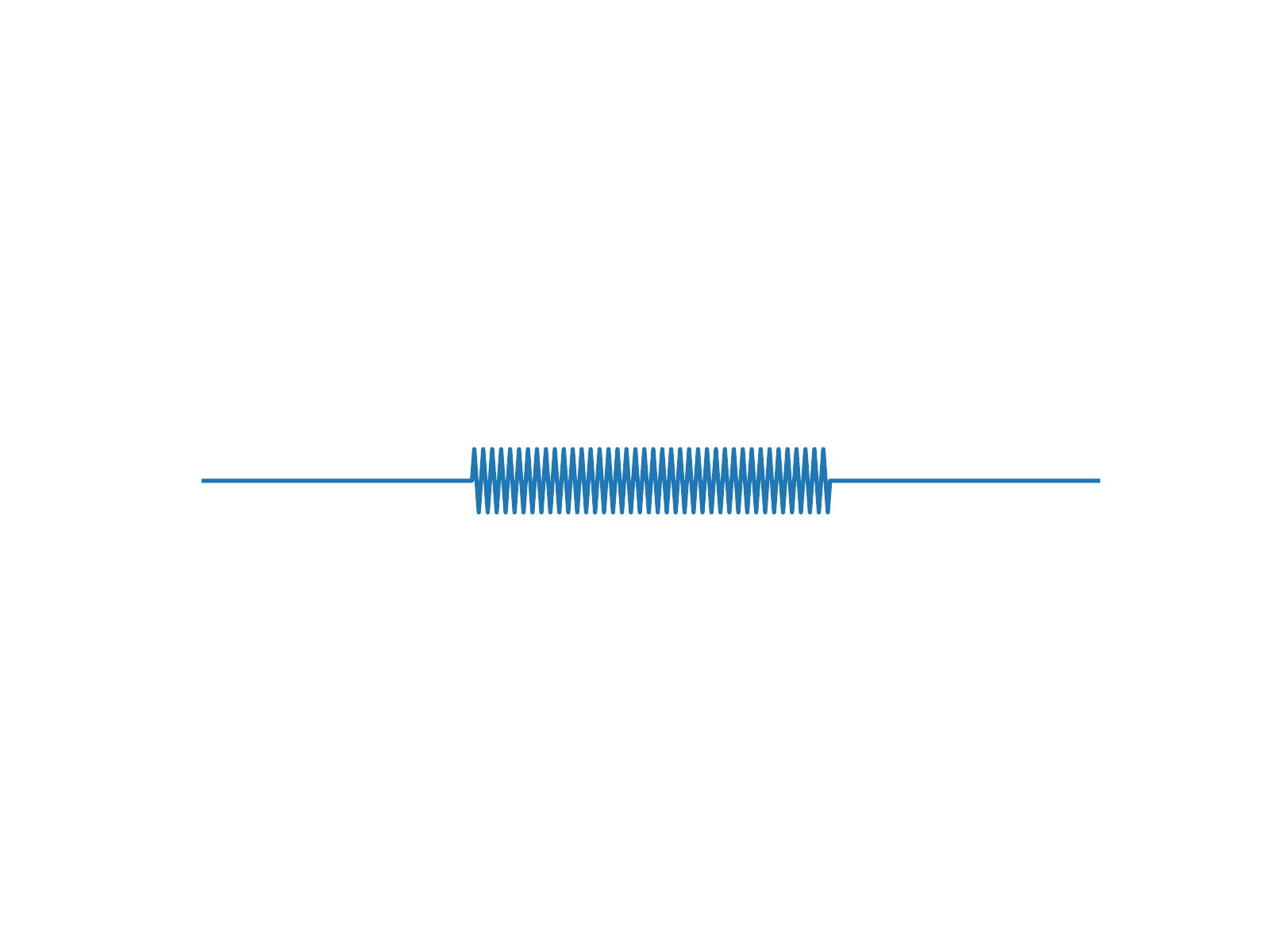}
        \Huge$=$
        \includegraphics[trim=3.5cm 0cm 3.5cm 0cm,clip,valign=m,width=.25\textwidth]{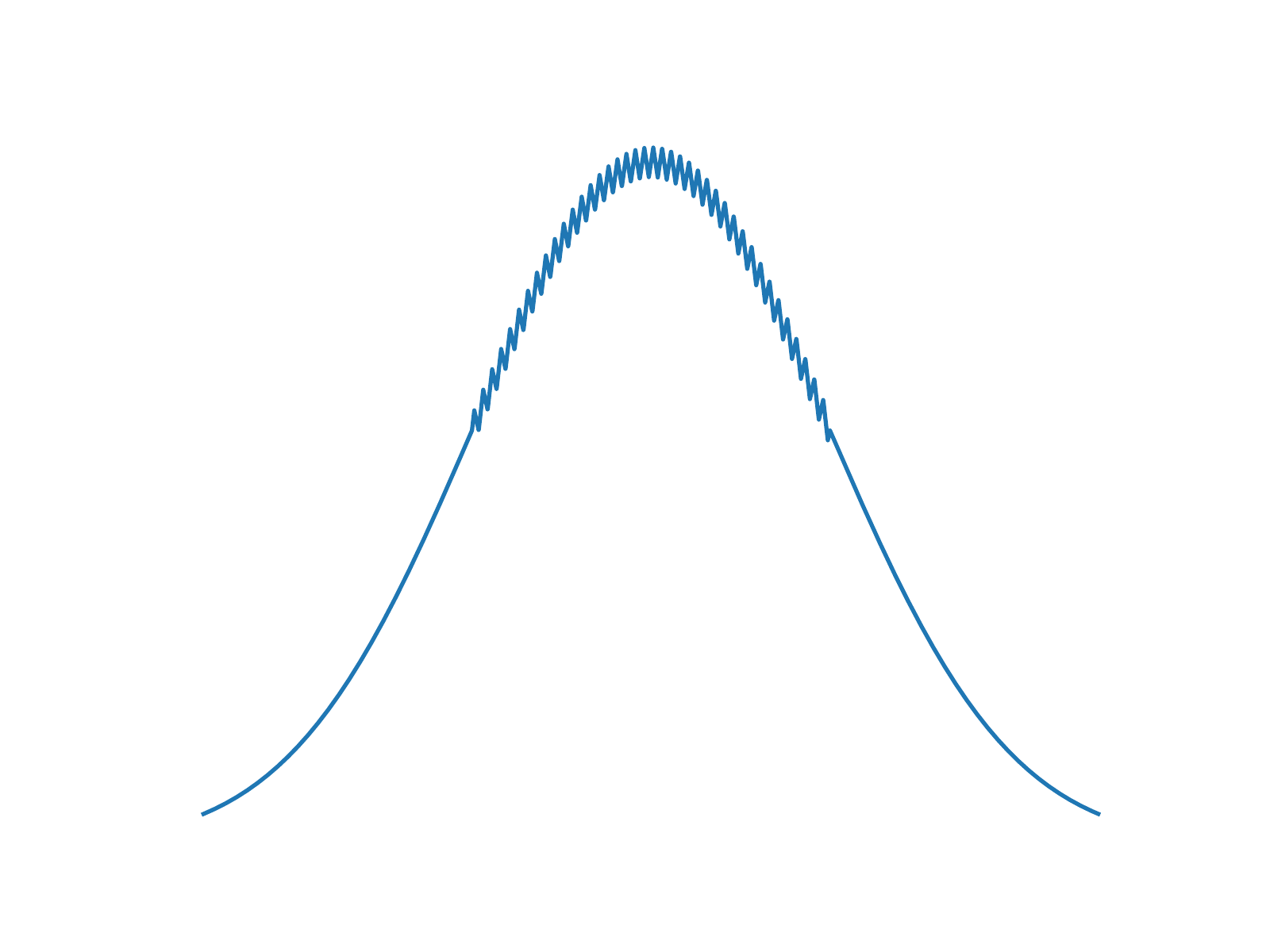}
        \caption{Sawtooth example}
        \label{fig:sawtooth}
    \end{subfigure}
    \hfill
    \begin{subfigure}[b]{.35\textwidth}
    \centering
        \includegraphics[width=\textwidth,keepaspectratio]{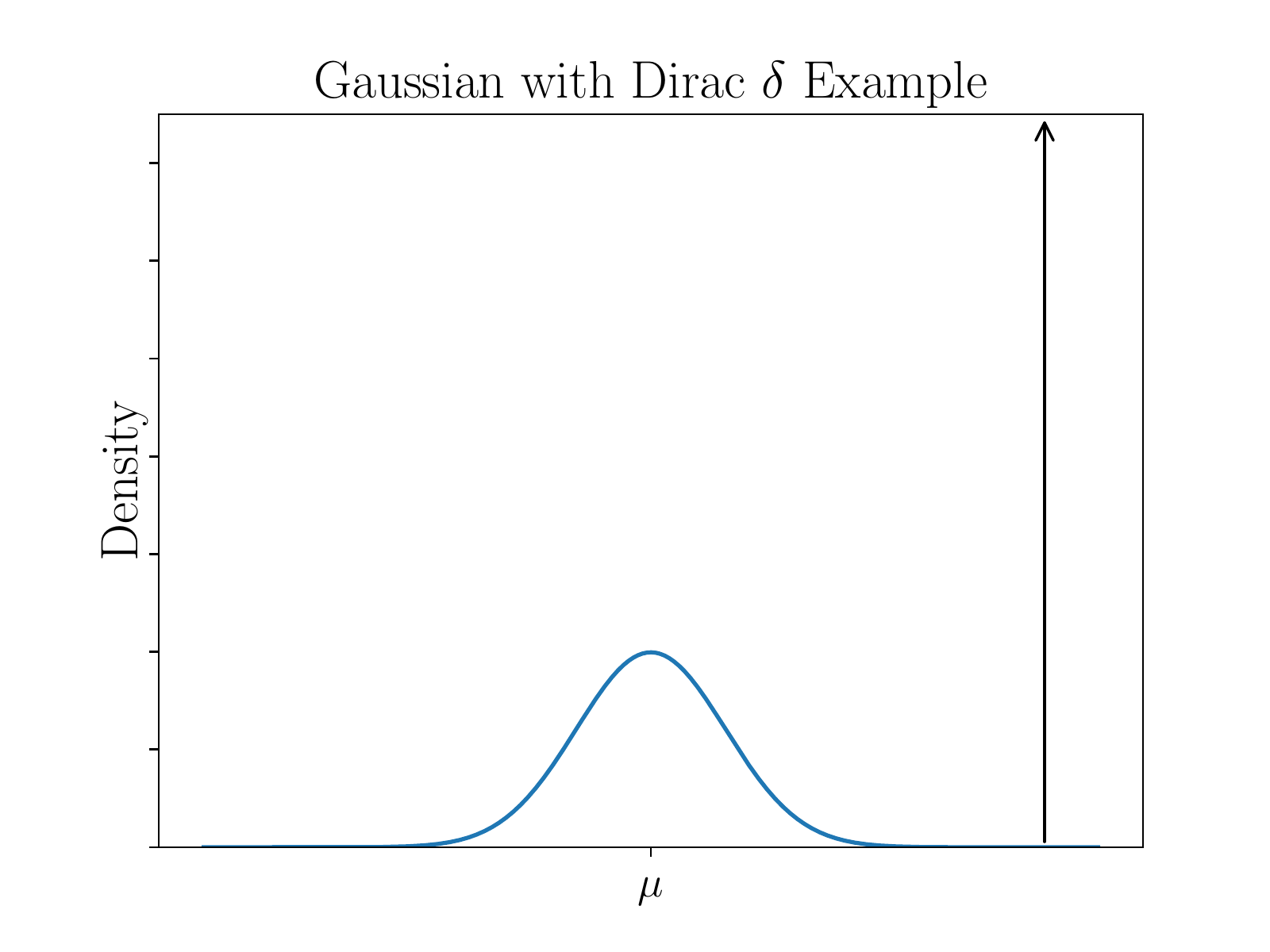}
        \caption{Dirac $\delta$ example}
        \label{fig:spikedGaussian}
    \end{subfigure}
    \caption{Bad examples for MLE}
    \vspace*{-5mm}
\end{figure}


\paragraph{Gaussian with a Dirac $\delta$ spike}
Our next example adds an even simpler noise to the standard Gaussian:
a Dirac $\delta$ spike with minimal mass $\eps$, placed sufficiently
far away from the Gaussian mean 0 (Figure~\ref{fig:spikedGaussian}).
This distribution has infinite Fisher information, reflecting that for
$n \gg 1/\eps$, we will probably see the spike multiple times, identify
it, and get zero estimation error.  But for $n < 1/\eps$, we probably
will not see the spike, so our error bound will not reflect the
overall Fisher information.

Moreover, the MLE performs remarkably badly when $n \ll 1/\eps$.  Most
likely the Dirac $\delta$ is not sampled, yet the Dirac $\delta$ has
infinite density, and so the MLE will match a Gaussian sample to the
spike to get a maximium likelihood solution.  As the spike is placed
far from the true mean, this leads to much higher error than (say) the
empirical mean.

Fortunately, there is a simple modification to the MLE that solves
this issue almost completely: we add a small amount of Gaussian noise
to the distribution.  This means we convolve the PDF with a small
Gaussian, and add independent Gaussian noise to each individual sample
to effectively draw from the convolved PDF.  The crucial effect of this
Gaussian smoothing is that the density of the Dirac $\delta$ will be
reduced from infinity down to some constant, and hence MLE will no
longer ``fit it at any cost''.  On the other hand, the smoothing increases
the variance of the distribution and decreases the Fisher information.
This raises the question of determining the \emph{best} amount of
smoothing, which we address in this paper.


\subsection{Our Results and Approach}
\label{sec:results}

Based on the discussion of the spiked Gaussian model in the previous section, we propose a finite sample theory for MLE that uses Gaussian smoothing.
As we described, the algorithmic approach is simple: we perturb all the samples by independent Gaussian noise of variance $r^2$, and convolve the known model $f$ by the same Gaussian to yield the model $f_r$, before performing MLE.


We state below the basic MLE algorithm (Algorithm~\ref{alg:localMLE}) in this paper, which adopts the above approach.
Algorithm~\ref{alg:localMLE} is a \emph{local} algorithm, in that it assumes as input an initial uncertainty region guaranteed to contain the true parameter $\lambda$, and performs MLE only over this domain.
Furthermore, Algorithm~\ref{alg:localMLE} attempts only to find a local optimum in the likelihood: it computes the derivative of the log likelihood function, also known as the \emph{score} function, and returns any root of the score function.

\begin{algorithm}\caption{Local MLE for known parametric model}
\label{alg:localMLE}
\vspace*{-3mm}
\paragraph{Input Parameters:} Description of distribution $f$, smoothing parameter $r$, samples $x_1, \ldots, x_n \overset{i.i.d.}{\sim} f^{\lambda}$, uncertainty region $[\ell, u]$ containing the unknown $\lambda$
\begin{enumerate}
\item Let $s_r(\lambdahat)$ be the score function of $f_r$, the $r$-smoothed version of $f$.
\item For each sample $x_i$, compute a perturbed sample $x'_i = x_i + \Normal(0,r^2)$ where all the Gaussian noise are drawn independently across all the samples.
\item Compute $\lambdahat$ that is a root of the empirical score function $\hat{s}(\lambdahat) = \sum_{i = 1}^n s_r(x'_i-\lambdahat)$ inside the domain $[\ell,u]$.
A root should exist and picking any root is sufficient.
\item Return $\lambdahat$.
\end{enumerate}

\end{algorithm}

Theorem~\ref{thm:simpleLocalMLE} states our guarantees on Algorithm~\ref{alg:localMLE}.
Locally around the true parameter---that is, within $r/2$ for the $r$-smoothed distribution---any root of the score function (i.e., local optimum of likelihood) must be very close to the true parameter $\lambda$ with high probability over the $n$ samples.  In particular, the estimation error is within a $1 + o(1)$ factor of the Gaussian deviation with variance $\frac{1}{n\I_r}$ and failure probability $\delta$, where $\I_r$ is the Fisher information of $f_r$.



\begin{restatable}[Local Convergence]{theorem}{corSimpleLocalMLE}
\label{thm:simpleLocalMLE}
Suppose we have a known model $f_r$ that is the result of $r$-smoothing, with Fisher information $\I_r$, and a given width parameter $\eps_{\max}$ and failure probability $\delta$.
Further suppose that $r$ satisfies $r \ge 2\eps_{\max}$ and there is a sufficiently large parameter $\gamma$ such that 
1) $r^2 \sqrt{\I_r} \ge \gamma \eps_{\max}$,
2) $(\log \frac{1}{\delta})/n \le \frac{1}{\gamma^2}$ and
3) $\log 1/(r\sqrt{\I_r}) \le \frac{1}{\gamma}\log\frac{1}{\delta}/\log \log \frac{1}{\delta}$.

Then, with probability at least $1-\delta$, for all $\eps \in \left((1+O(\frac{1}{\gamma}))\sqrt{\frac{2\log\frac{1}{\delta}}{n \I_r}}, \eps_{\max}\right]$, $\hat{s}(\lambda-\eps)$ is strictly negative and $\hat{s}(\lambda+\eps)$ is strictly positive.
\end{restatable}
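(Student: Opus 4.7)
The plan is a bias--variance decomposition of the empirical score $\hat s$, followed by a uniformity argument over $\eps$. By symmetry, it suffices to establish $\hat s(\lambda+\eps)>0$. Since the perturbed samples satisfy $x'_i-\lambda\sim f_r$, the two classical Fisher identities $\E_{Z\sim f_r}[s_r(Z)]=0$ and $\E_{Z\sim f_r}[s_r'(Z)]=-\I_r$ yield, via a first-order Taylor expansion of $u\mapsto \E_{Z\sim f_r}[s_r(Z-u)]$ around $u=0$, the bias estimate $\E[\hat s(\lambda+\eps)]=n\eps\I_r+O(n\eps^2 R)$ where $R$ bounds a second derivative on $[0,\eps_{\max}]$. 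The Gaussian-convolution representation $s_r(x)=r^{-2}(\E[Y\mid X=x]-x)$, with $Y\sim f$ and $X=Y+\Normal(0,r^2)\sim f_r$, lets one express derivatives of $s_r$ in terms of conditional moments of $Y\mid X$ times powers of $r^{-2}$. The hypotheses $r\ge 2\eps_{\max}$ and $r^2\sqrt{\I_r}\ge\gamma\eps_{\max}$ are calibrated precisely to convert this error into an $O(1/\gamma)$ multiplicative loss, giving $\E[\hat s(\lambda+\eps)]\ge(1-O(1/\gamma))\,n\eps\I_r$ uniformly over $\eps\in(0,\eps_{\max}]$.

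The crux is a sharp sub-Gaussian concentration of $\hat s(\lambdahat)$ around its mean with the correct leading constant: the target deviation $(1+O(1/\gamma))\sqrt{2n\I_r\log(1/\delta)}$ contains the factor $2$ inside the square root, which no generic Hoeffding or Bernstein bound can reach. The Fisher identity $\Var_{Z\sim f_r}[s_r(Z)]=\I_r$ supplies precisely the target variance per sample, and the same Taylor argument shows this variance is preserved to a $1+O(1/\gamma)$ factor on the whole interval $|\lambdahat-\lambda|\le\eps_{\max}$. Chernoff's method then yields the desired tail provided the MGF of $s_r(Z)$ is effectively sub-Gaussian at the scale $t\asymp\sqrt{\log(1/\delta)/(n\I_r)}$. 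Gaussian smoothing delivers exactly this: the conditional-expectation form of $s_r$ allows its MGF to be controlled by combining Gaussian tail bounds on $X-Y$ with a truncation of the score at a level governed by $r\sqrt{\I_r}$. Hypothesis~3, the bound on $\log(1/(r\sqrt{\I_r}))$ in terms of $\log(1/\delta)/\log\log(1/\delta)$, is the exact calibration ensuring the sub-Gaussian regime extends to the relevant scale, while hypothesis~2 keeps the cubic Bernstein-type correction in the cumulant expansion subleading compared to the quadratic Gaussian term, preserving the sharp constant.

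Finally, to promote pointwise concentration to the stated ``for all $\eps$'' statement, I would place a geometric net of polylogarithmic size on the interval, apply the tail bound at each net point with failure probability suitably divided, and interpolate using a uniform bound on $|s_r'|$ extracted again from the Gaussian-convolution representation. The logarithmic net size is easily absorbed into the $O(1/\gamma)$ slack. Combining the bias lower bound on $\E[\hat s(\lambda+\eps)]$ with the uniform fluctuation bound then gives $\hat s(\lambda+\eps)>0$ simultaneously for all $\eps$ above the threshold, and the symmetric argument handles $\hat s(\lambda-\eps)<0$. The main obstacle throughout is the sub-Gaussian concentration: attaining the tight factor of $2$ demands simultaneous exploitation of the exact Fisher variance identity and of the smoothing-induced MGF control, and the three hypotheses are calibrated precisely to keep the Bernstein and truncation errors subleading so that the asymptotic Gaussian constant survives the finite-sample analysis.
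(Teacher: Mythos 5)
Your skeleton (expected score $\approx \pm\I_r\eps$, sharp concentration of the empirical score, then a net plus a derivative bound to get uniformity in $\eps$) is the same as the paper's, but the three places where you assert the sharp constants are exactly where the paper's technical content lives, and two of your assertions are false as stated. First, the score of an $r$-smoothed distribution is \emph{not} ``effectively sub-Gaussian'' with variance proxy $O(\I_r)$: for a spiked example one can have $\I_r=O(1)$ while $|s_r|$ reaches $\Omega(\sqrt{\log(1/r)}/r)\gg\sqrt{\I_r}$ on a set of non-negligible mass, so the correct statement is only sub-Gamma, $\E_x|s_r(x+\eps)|^k\le\frac{k!}{2}(15/r)^{k-2}\max(\E_x[s_r^2(x+\eps)],\I_r)$, which the paper proves by a self-comparison argument showing $f_r(x)|s_r(x)|^k\le\frac14\bigl(f_r(x-r)|s_r(x-r)|^k+f_r(x+r)|s_r(x+r)|^k\bigr)$ whenever $|s_r(x)|\gtrsim\sqrt{k}/r$ (Lemmas~\ref{lem:score-moments}--\ref{lem:offcenter-score-moments}), and then Bernstein for sub-Gamma variables gives the $\sqrt{2\I_r\log(1/\delta)/n}$ Gaussian term plus a $\frac{15\log(1/\delta)}{nr}$ linear term that the hypotheses render subleading. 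Your proposed mechanism (truncation at a level governed by $r\sqrt{\I_r}$ plus Gaussian tails of $X-Y$) is not developed and does not obviously yield this moment growth; this is the crux you identified but did not supply. Second, your bias bound via a pointwise second derivative $R$ expressed through conditional moments of $Y\mid X$ ``times powers of $r^{-2}$'' naturally gives an error of order $\eps^2/r^3$, whose ratio to the main term $\eps\I_r$ is $\frac{\eps}{r^3\I_r}\le\frac{1}{\gamma\, r\sqrt{\I_r}}$, which is \emph{not} $O(1/\gamma)$ since $r\sqrt{\I_r}$ may be far below $1$ (condition 3 only controls its logarithm). The paper needs the error in the form $O(\sqrt{\I_r}\,\eps^2/r^2)$, obtained by Cauchy--Schwarz against $\E[\Delta_\eps^2]\lesssim\eps^4/r^4$ (Lemmas~\ref{lem:smoothed} and~\ref{lem:grad_expectation_theta}); similarly the second-moment statement carries a $\frac{\eps}{r}\sqrt{\log\frac{1}{r^2\I_r}}$ correction (Lemma~\ref{lem:var_close_to_fisher}) that must be tied to condition 1, not waved through as ``the same Taylor argument.''

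Third, the uniformity step: there is no uniform bound on $|s_r'|$ --- only the one-sided bound $s_r'\ge -1/r^2$ (Lemma~\ref{lem:u-lowerderivative}), since $s_r'$ can be arbitrarily large positive near a spike --- so interpolation between net points works only in the direction compatible with the sign you are proving (which suffices, but your step as written is wrong). Moreover the admissible net is not of polylogarithmic size: to interpolate with slope bound $1/r^2$ against a margin of order $\I_r\eps$ the spacing ratio must be $1+O(r^2\I_r)$, giving roughly $\frac{1}{\xi r^2\I_r}$ points, and a plain $\delta/|N|$ union bound then inflates the leading constant unless one either exploits, as the paper does, that the per-point failure probability decays like $\delta^{i^2}$ with the distance index $i$, or invokes condition 3 to show $\log\frac{1}{r^2\I_r}$ is $o(\log\frac1\delta)$. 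These gaps are of the ``missing key lemma'' kind rather than a wrong global strategy, but without them the $1+O(1/\gamma)$ factor --- the entire point of the theorem --- is not established.
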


In the scenario where we do have an initial uncertainty region for the true parameter $\lambda$, we would use Theorem~\ref{thm:simpleLocalMLE} to compute the minimal smoothing amount $r$ satisfying the assumptions in the theorem, then use Algorithm~\ref{alg:localMLE} with this parameter $r$, to obtain an accurate estimate of $\lambda$.

\looseness=-1 In general, however, we may not have an initial uncertainty region for $\lambda$.
In Section~\ref{sec:globalMLE}, we present Algorithm~\ref{alg:globalMLE}, a global two-stage MLE algorithm, which first infers an initial uncertainty region by using quantile information from the distribution $f$ before invoking Algorithm~\ref{alg:localMLE}, the local MLE algorithm.
The guarantees of Algorithm~\ref{alg:globalMLE} are summarized here.

\begin{theorem}[Global MLE guarantees, informal version of Theorem~\ref{thm:simpleGlobalMLE}]
\label{thm:informalSimpleGlobalMLE}
Given a model $f$, let the $r$-smoothed Fisher information of a
distribution $f$ be $\I_r$, and let $\IQR$ be the interquartile range
of $f$.  When
$n \gg \log\frac{1}{\delta} \gg 1$, there exists an $r^* = o(\IQR)$ such
that, with probability at least $1-\delta$, the output $\lambdahat$ of
Algorithm~\ref{alg:globalMLE} satisfies
\[ |\lambdahat - \lambda| \le \left(1+o(1)\right)\sqrt{\frac{2\log\frac{1}{\delta}}{n I_{r^*}}}\]
\end{theorem}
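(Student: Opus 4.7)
The plan is to prove Theorem~\ref{thm:informalSimpleGlobalMLE} by reducing to Theorem~\ref{thm:simpleLocalMLE}. Algorithm~\ref{alg:globalMLE} should proceed in two stages: first, construct an uncertainty region $[\ell,u]\ni\lambda$ using empirical quantiles of the known shape $f$; second, invoke Algorithm~\ref{alg:localMLE} on that region with an appropriately chosen smoothing parameter $r^*$. The overall failure probability is controlled by a union bound, with each stage designed to fail with probability at most $\delta/2$.

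\textbf{Stage 1 (coarse localization via quantiles).} Because $f$ is known, each quantile of $f^\lambda$ equals $\lambda$ plus a known constant. Applying Dvoretzky--Kiefer--Wolfowitz to the empirical CDF and inverting at well-separated levels of $f$ (e.g., $1/4$ and $3/4$) yields, with probability at least $1-\delta/2$, an uncertainty region of width $\eps_{\max}$ containing $\lambda$, where the ratio $\IQR/\eps_{\max}$ grows polynomially in $n/\log(1/\delta)$. In particular $\eps_{\max} = o(\IQR)$ because $n\gg\log(1/\delta)$.

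\textbf{Stage 2 (smoothing choice and local MLE).} We next pick $r^*$ and a slowly growing $\gamma=\omega(1)$ so that the hypotheses of Theorem~\ref{thm:simpleLocalMLE} hold on the region from Stage~1: $r^*\ge 2\eps_{\max}$, $(r^*)^2\sqrt{\I_{r^*}}\ge \gamma\eps_{\max}$, $\log(1/\delta)/n\le 1/\gamma^2$, and $\log(1/(r^*\sqrt{\I_{r^*}}))\le (1/\gamma)\log(1/\delta)/\log\log(1/\delta)$. The condition on $n$ is the quantitative form of $n\gg\log(1/\delta)$. For the other two, we use the general sandwich $1/(r^2+\Var(f))\lesssim \I_r\le 1/r^2$, which gives $r\sqrt{\I_r}\gtrsim r/\IQR$ and $r^2\sqrt{\I_r}\gtrsim r^2/\IQR$ in the regime $r\ll\IQR$. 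Taking $r^*\asymp\sqrt{\gamma\,\IQR\,\eps_{\max}}$ satisfies the two lower bounds, while $\log(1/(r^*\sqrt{\I_{r^*}}))=O(\log(\IQR/r^*))$ stays within the permitted budget because the polynomial gap $\IQR/\eps_{\max}$ translates only into a logarithmic overhead. All choices are compatible with $r^*=o(\IQR)$, so applying Theorem~\ref{thm:simpleLocalMLE} with failure probability $\delta/2$ yields the advertised bound.

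\textbf{Main obstacle.} The delicate step is the joint feasibility of the three conditions on $r^*$ together with $r^*=o(\IQR)$: since $\I_r$ can behave non-monotonically in $r$ for arbitrary $f$, the existence of a usable $r^*$ is not automatic. The argument hinges on the Fisher-information sandwich above and on the polynomial margin between $\eps_{\max}$ and $\IQR$ supplied by $n\gg\log(1/\delta)$, which together leave enough room to balance the lower bounds on $r^2\sqrt{\I_r}$ and on $r\sqrt{\I_r}$.
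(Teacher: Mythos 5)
There are two genuine gaps, both at exactly the points your proposal identifies as delicate but then glosses over.

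\textbf{Stage 1 is incorrect for arbitrary $f$ as written.} Pinning the quantile levels at $1/4$ and $3/4$ does not yield $\eps_{\max}=o(\IQR)$ in general. The width of your uncertainty region is (up to the two-sided intersection) the length of the $x$-interval between the $\alpha-\sqrt{2\log(4/\delta)/n}$ and $\alpha+\sqrt{2\log(4/\delta)/n}$ quantiles of $f$ at the \emph{fixed} level $\alpha\in\{1/4,3/4\}$, and this can be $\Theta(\IQR)$ no matter how large $n$ is: take $f$ with three well-separated blobs of mass $1/4$, $1/2$, $1/4$, so that both quartiles sit exactly at the edge of a gap of length $\Theta(\IQR)$; then every mass-$2\sqrt{2\log(4/\delta)/n}$ window around level $1/4$ or $3/4$ straddles that gap. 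This is precisely why Algorithm~\ref{alg:globalMLE} (Step 1) chooses $\alpha$ \emph{adaptively} as the level minimizing the window width; a pigeonhole argument over the $\approx\sqrt{n/\log(1/\delta)}$ disjoint mass-windows inside the interquartile range then gives $\eps_{\max}\le O(\sqrt{\log(1/\delta)/n})\,\IQR$, which your fixed-level DKW argument cannot deliver. (Relatedly, your sandwich $1/(r^2+\Var(f))\lesssim\I_r$ is vacuous when $f$ has infinite variance; the bound actually needed, and used in the paper, is Lemma~\ref{lem:IrLB}, $\I_r\gtrsim 1/(\IQR+r)^2$ --- your subsequent deductions implicitly use this IQR form.)

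\textbf{Stage 2: your choice $r^*\asymp\sqrt{\gamma\,\IQR\,\eps_{\max}}$ can violate condition 3 of Theorem~\ref{thm:simpleLocalMLE}.} With that choice, $\log\bigl(1/(r^*\sqrt{\I_{r^*}})\bigr)=\Theta\bigl(\log(\IQR/r^*)\bigr)=\Theta\bigl(\log(n/\log\tfrac1\delta)\bigr)$ up to constants, whereas the permitted budget is $\frac{1}{\gamma}\log\frac1\delta/\log\log\frac1\delta$. The theorem's regime allows $n$ to be arbitrarily large relative to $1/\delta$ (only $n\gg\log\frac1\delta\gg1$ is assumed), and in that regime $\log(n/\log\frac1\delta)$ can vastly exceed $\log\frac1\delta$, so "the polynomial gap translates only into a logarithmic overhead" does not close the argument: the overhead is logarithmic in $n$, but the budget is in $\delta$. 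The paper resolves this by flooring the smoothing radius, taking $r^*=\Omega\bigl(\max\bigl((\log\tfrac1\delta/n)^{1/8},\,2^{-O(\sqrt{\log(1/\delta)})}\bigr)\bigr)\IQR$, so that $\log(\IQR/r^*)=O(\sqrt{\log\frac1\delta})$ always; condition 3 then holds with $\gamma_2=\Theta(\sqrt{\log\frac1\delta})$, condition 1 holds with $\gamma_1=\Theta\bigl((n/\log\frac1\delta)^{1/4}\bigr)$ via $(r^*)^2\sqrt{\I_{r^*}}\gtrsim (r^*)^2/(\IQR+r^*)$, and the resulting error factor is $1+O\bigl((\log\frac1\delta/n)^{1/4}\bigr)+O\bigl(1/\sqrt{\log\frac1\delta}\bigr)=1+o(1)$. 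Your overall two-stage reduction to Theorem~\ref{thm:simpleLocalMLE} with a $\delta/2$ union bound is the right skeleton and matches the paper, but both the coarse-localization step and the choice of $r^*$ need the modifications above to be correct for arbitrary $f$ and for all admissible $(n,\delta)$.
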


In addition to the theoretical framework, Section~\ref{sec:experiments} gives experimental evidence demonstrating that $r$-smoothed Fisher information does capture the empirical performance of (smoothed) MLE.

We also prove new estimation lower bounds for the location estimation problem for $r$-smoothed distributions.
The lower bound statement below (Theorem~\ref{thm:lowerbound}) shows that the estimation error $\left(1+o(1)\right)\sqrt{\frac{2\log\frac{1}{\delta}}{n I_{r}}}$ is optimal to within a $1+o(1)$ factor.

\begin{restatable}{theorem}{ThmLB}
\label{thm:lowerbound}
Suppose $f_r$ is an $r$-smoothed distribution with Fisher information $\I_r$.
Given failure probability $\delta$ and sample size $n$, no algorithm can distinguish $f_r$ and $f_r^{2\eps}$ with probability $1-\delta$, where $\eps = (1-o(1))\sqrt{2\log\frac{1}{\delta}/(n\I_r)}$.
Here, the $o(1)$ term tends to 0 as $\delta \to 0$ and $\log\frac{1}{\delta}/n \to 0$, for a fixed $r^2\I_r$.
\end{restatable}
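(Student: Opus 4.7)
The plan is to apply Le Cam's two-point method via a likelihood-ratio analysis. The problem reduces to showing that the Neyman--Pearson optimal test between $P = f_r^{\otimes n}$ and $Q = (f_r^{2\eps})^{\otimes n}$ incurs error probability at least $\delta$ on at least one side. By translation symmetry the two error probabilities coincide at the minimax threshold $0$ applied to the log-likelihood ratio
\[ L \;=\; \sum_{i=1}^n \ell(x_i), \qquad \ell(x) := \log \frac{f_r(x)}{f_r(x - 2\eps)}, \qquad x_i \overset{\text{i.i.d.}}{\sim} f_r, \]
so it suffices to show $\Pr_P[L \le 0] \ge \delta$ whenever $\eps \le (1-o(1))\sqrt{2\log(1/\delta)/(n\I_r)}$.

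To compute the first two moments of $L$, I would Taylor-expand $\ell$ in the shift. Writing $g := \log f_r$, we have $\ell(x) = 2\eps\, g'(x) - 2\eps^2\, g''(x) + O(\eps^3 \|g'''\|_\infty)$; combined with the identities $\E_P[g'(X)] = 0$, $\E_P[(g'(X))^2] = \I_r$, and $\E_P[g''(X)] = -\I_r$ (the last two by integration by parts), this yields
\[ \E_P[\ell(X)] = 2\eps^2 \I_r\,(1+o(1)) \quad \text{and} \quad \mathrm{Var}_P[\ell(X)] = 4\eps^2 \I_r\,(1+o(1)). \]
Hence $\E_P[L] = 2n\eps^2\I_r(1+o(1))$, $\mathrm{Var}_P[L] = 4n\eps^2\I_r(1+o(1))$, and the normalized signal-to-noise ratio equals $\E_P[L]/\sqrt{\mathrm{Var}_P[L]} = \eps\sqrt{n\I_r}(1+o(1))$.

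The next step is a quantitative central limit argument, giving $\Pr_P[L \le 0] \approx \Phi(-\eps\sqrt{n\I_r}(1+o(1)))$. Inverting via Mill's ratio, the Gaussian tail satisfies $\Phi(-t) \ge \delta$ iff $t \le \sqrt{2\log(1/\delta)}(1+o(1))$, yielding $\eps\sqrt{n\I_r} \le \sqrt{2\log(1/\delta)}(1+o(1))$ exactly as claimed. The Gaussian smoothing is essential here: convolution with a Gaussian kernel of radius $r$ endows $g = \log f_r$ with uniformly bounded derivatives of all orders (scaling in $r$), so the Taylor remainder for $\ell$ and the higher-moment terms appearing in the CLT are both $o(1)$ under the regime $\log(1/\delta)/n \to 0$ at fixed $r^2\I_r$.

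The main obstacle will be making the $1+o(1)$ approximation valid at tail depth $\sqrt{2\log(1/\delta)}$ standard deviations: a vanilla Berry--Esseen bound gives only additive error $O(1/\sqrt n)$, which is far coarser than the $\delta$-scale tail it needs to approximate. I would therefore replace it with a moderate-deviations refinement---or equivalently a direct Chernoff analysis of $L$ using the log-moment-generating function assembled from the Taylor expansion of $\ell$---to obtain a \emph{relative} (rather than additive) approximation to the Gaussian tail. The uniformly bounded higher derivatives of $g$ afforded by the Gaussian smoothing make these log-MGF coefficients uniformly controllable, so the refinement goes through under the stated regime.
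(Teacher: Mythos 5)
Your high-level reduction and the first/second moment calculations are consistent with the paper (the mean $2\eps^2\I_r(1+o(1))$ and variance $4\eps^2\I_r(1+o(1))$ of the per-sample log-likelihood ratio are exactly the paper's Lemmas on $\kl{f_r}{f_r^{2\eps}}$ and the second moment of $\gamma$), and the target inequality $\eps\sqrt{n\I_r}\le(1-o(1))\sqrt{2\log\frac{1}{\delta}}$ is the right one. But there is a genuine gap in the ingredient you lean on to control everything beyond second order: it is \emph{false} that Gaussian smoothing makes $g=\log f_r$ have uniformly bounded derivatives. Take $f$ to be two point masses separated by $a\gg r$: then $s_r=g'$ grows like $|x|/r^2$ in the tails, and $g''$ at the midpoint is of order $a^2/r^4$, so $\|g''\|_\infty$ and $\|g'''\|_\infty$ blow up with $a$. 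Consequently your Taylor remainder $O(\eps^3\|g'''\|_\infty)$ and the claim that the log-MGF coefficients of $\ell$ are ``uniformly controllable'' do not hold as stated. What smoothing actually buys -- and what the paper proves and uses -- is much weaker: a one-sided bound $s_r'\ge -1/r^2$ and sub-Gamma \emph{moment} bounds $\E_x|s_r(x+\eps)|^k\le\frac{k!}{2}(15/r)^{k-2}\max(\E_x[s_r^2(x+\eps)],\I_r)$, from which the moments of $\gamma=\log\frac{f_r^{2\eps}}{f_r}$ are bounded via the integral representation $\gamma(x)=\int_x^{x+2\eps}s_r(y)\,\d y$ rather than a pointwise sup-norm Taylor expansion. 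Any repaired version of your argument must route through this kind of moment control.

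The second gap is that even granting sub-Gamma control of $\ell$, your plan needs a \emph{lower} bound on the tail probability $\Pr_P[L\le 0]$ at depth $\sqrt{2\log\frac{1}{\delta}}$ standard deviations (and an analogous statement under $Q$ -- note the two error probabilities at threshold $0$ do not literally coincide unless $f_r$ is symmetric, though both sides can be handled analogously). A Chernoff/log-MGF computation by itself gives only tail \emph{upper} bounds; turning it into a $(1+o(1))$-tight lower bound requires an exponential-tilting/moderate-deviations argument with anti-concentration under the tilted measure, and with the extra wrinkle that the summand law depends on $n$ through $\eps$, so off-the-shelf moderate deviation theorems do not directly apply. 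This is precisely the delicate step, and the proposal only gestures at it. The paper avoids it by a different device: it lower bounds $1-\DTV(p^{\otimes n},q^{\otimes n})$ via a Bhattacharyya coefficient restricted to the event that the empirical log-likelihood ratio is near its typical value, combining $\DH^2\le(1+\kappa)\frac14\kl{p}{q}$ with sub-Gamma (Bernstein) \emph{upper} tail bounds on $\bar\gamma$ to discard the bad buckets -- thereby getting the $\delta^{1+o(1)}$ indistinguishability bound without ever proving a tail lower bound for $L$ directly. Either you adopt that mechanism, or you must supply the tilting-based lower bound in full; as written, the proof does not go through.
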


This lower bound is the standard ``two-point" statement that, with $n$ samples, it is statistically impossible to distinguish between the distributions $f$ and $f$ shifted by a small error (in the $x$-axis) with probability $1-\delta$.
Even though there are known standard inequalities on distribution distances and divergences for proving lower bounds of this form, the technical challenge is that they generally yield estimation lower bounds that are only tight to within constant factors, instead of the $1+o(1)$ tightness we desire.
This paper presents new analysis to derive a $1+o(1)$-tight lower bound, which may be of independent interest.

\subsection{Notations}

We denote the shift-invariant model we consider by the distribution $f$, and the distribution with parameter $\lambda$ by $f^\lambda(x) = f(x-\lambda)$.
Denote by $Z_r$ the Gaussian with mean 0 and variance $r^2$.
The $r$-smoothed model for $f$ is denoted by $f_r$ (and similarly, for parameter $\lambda$, $f^\lambda_r$) which is distributed as $Y = Z_r + X$ where $X \from f$ independently from the Gaussian perturbation $Z_r$.

\looseness=-1 The log-likelihood function of $f$ is denoted by $l = \log f$.
The \emph{score} function is the derivative of $l$, denoted by $s = l' = f'/f$.
We use the notation $s_r$ to denote the score function of $f_r$.
The Fisher Information of $f$ is denoted by $I = \Exp_{x \from f}[s^2(x)]$.
Similarly, the Fisher Information of $f_r$ is denoted by $I_r$.

\section{Related Work}

Location estimation and MLE in general has been extensively studied under the lens of asymptotic statistics.
See~\cite{vanDerVaart:2000asymptotic} for an in-depth treatment.
The MLE has also been studied under the finite-sample setting~\cite{parametric_finite_sample,rate_of_convergence_mle, MLE_concentration}, but these prior works impose restrictive regularity conditions and also loses (at least) multiplicative constants in the estimation accuracy.
In contrast, our work modifies the MLE to include smoothing, and we give analyses that are tight to within $1+o(1)$ factors.

\looseness=-1 There has also been a flurry of recent interest in the related mean estimation problem, in the finite-sample and high-probability setting.
Recall that mean estimation does not assume knowledge of the shape of the distribution, but instead imposes mild moment conditions, for example the finiteness of the variance.
Catoni~\cite{catoni} initiated a line of work studying the statistical limits of univariate mean estimation to within a $1+o(1)$ factor, ending recently with the work of Lee and Valiant~\cite{lee_valiant}, which proposed and analyzed an estimator with accuracy optimal to within a $1+o(1)$ factor for all distributions with finite variance.
See also the recent work of Minsker~\cite{Minsker:2022-subgaussian-mean} for an alternative solution.

Beyond the differences in assumptions, the main distinction between location and mean estimation lies in their statistical limits.
In mean estimation, the optimal accuracy is captured by the variance of the underlying distribution, scaling linearly with the standard deviation.
On the other hand, the classic asymptotic theory suggests that the Fisher information captures the optimal accuracy for location estimation, scaling with the reciprocal of the square root of the Fisher information.
It is a well-known fact that the Fisher information is always lower bounded by the reciprocal of the variance~\cite{Shevlyakov:2011}, which shows that location estimation is always easier than mean estimation in the infinite-sample regime.
In this work, we refine this understanding, showing that in finite samples, the optimal accuracy for location estimation is instead given by the $r$-smoothed Fisher information in place of the unsmoothed Fisher information.



\section{Tails and boundedness of $r$-smoothed score and Fisher information}
\label{sec:rscore}

Recall that given a distribution $f$, its $r$-smoothed version $f_r$ is distributed as $Y = X+Z_r$ where $X \sim f$ and $Z_r \sim \Normal(0, r^2)$ and $X,Z_r$ are independent.

Both our algorithmic and lower bound theories are centered around $r$-smoothed distributions.
Therefore, we state here basic concentration and boundedness properties of $r$-smoothed score function and Fisher information, which we use in the rest of the paper.
We prove all these lemmas in Appendix~\ref{app:rscore}.




First, we show that the $r$-smoothed Fisher information $\I_r$ is upper bounded by $1/r^2$ and can be lower bounded using the interquartile range of $f$.

\begin{restatable}{lemma}{LemIrBounded}
\label{lem:IrBounded}
  Let $\I_r$ be the Fisher information of an $r$-smoothed distribution $f_r$.
  Then, $\I_r \le 1/r^2$.
\end{restatable}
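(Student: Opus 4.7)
The plan is to use a Tweedie-style identity for the score function of a Gaussian-smoothed distribution, followed by a conditional-expectation/Jensen argument. Write $Y = X + Z_r$ with $X \sim f$ independent of $Z_r \sim \Normal(0, r^2)$, and let $\phi_r$ denote the density of $Z_r$, so that $f_r = f * \phi_r$.

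First I would compute $s_r$ explicitly. Differentiating under the integral and using $\phi_r'(z) = -\frac{z}{r^2} \phi_r(z)$ gives
\begin{align*}
f_r'(y) \;=\; \int f(x)\,\phi_r'(y-x)\,\d x \;=\; -\frac{1}{r^2} \int (y-x)\, f(x)\, \phi_r(y-x)\,\d x.
\end{align*}
Dividing by $f_r(y) = \int f(x)\phi_r(y-x)\,\d x$, the ratio is exactly the conditional expectation of $Y - X = Z_r$ given $Y = y$ under the joint law of $(X, Z_r)$. Hence
\begin{align*}
s_r(y) \;=\; \frac{f_r'(y)}{f_r(y)} \;=\; -\frac{1}{r^2}\, \Exp[\,Z_r \mid Y = y\,].
\end{align*}

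Next, I would substitute into the definition of $\I_r$ to get
\begin{align*}
\I_r \;=\; \Exp\!\left[s_r(Y)^2\right] \;=\; \frac{1}{r^4}\, \Exp\!\left[\Exp[Z_r \mid Y]^2\right].
\end{align*}
The tower property combined with conditional Jensen's inequality yields $\Exp[\Exp[Z_r \mid Y]^2] \le \Exp[\Exp[Z_r^2 \mid Y]] = \Exp[Z_r^2] = r^2$, so
\begin{align*}
\I_r \;\le\; \frac{1}{r^4} \cdot r^2 \;=\; \frac{1}{r^2},
\end{align*}
which is the desired bound.

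There is essentially no hard step here: the only thing to be careful about is the regularity needed to differentiate under the integral sign, but this is standard because $\phi_r$ and its derivative are Schwartz functions, so dominated convergence applies for any density $f$. A fully alternative route would be to invoke Stam's inequality $1/\I(X+Z) \ge 1/\I(X) + 1/\I(Z)$ with the Gaussian's Fisher information $\I(Z_r) = 1/r^2$; this gives the same conclusion but hides the underlying conditional-expectation picture that will likely be reused elsewhere in the paper.
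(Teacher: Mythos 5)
Your proposal is correct and matches the paper's own argument: the paper's Lemma~\ref{lem:p-z} is exactly the representation of $s_r$ as $\pm\frac{1}{r^2}\E[Z_r \mid Y]$ (your Tweedie computation), and the paper then applies Jensen's inequality to the conditional expectation just as you do, giving $\I_r \le \E[Z_r^2]/r^4 = 1/r^2$. The sign of the conditional-expectation identity (you get $-\frac{1}{r^2}\E[Z_r\mid Y=y]$, the paper writes it with a plus) is immaterial here since the quantity is squared.
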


\begin{restatable}{lemma}{LemIrLB}
\label{lem:IrLB}
Let $\I_r$ be the Fisher information for $f_r$, the $r$-smoothed version of distribution $f$.
Let $\IQR$ be the interquartile range of $f$.
Then, $\I_r \gtrsim 1/(\IQR+r)^2$.
Here, the hidden constant is a universal one independent of the distribution $f$ and independent of $r$.
\end{restatable}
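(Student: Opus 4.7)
The plan is to derive the lower bound on $\I_r$ from the variational characterization of Fisher information. Specifically, for any absolutely continuous test function $g$ with $\E[g(Y)^2]<\infty$ and suitable decay, integration by parts gives $\E[g'(Y)] = -\int g\, f_r'$, and Cauchy--Schwarz applied to $\int g\cdot(f_r'/\sqrt{f_r})\cdot\sqrt{f_r}$ yields
\[
\I_r \;\ge\; \frac{(\E[g'(Y)])^2}{\E[g(Y)^2]}.
\]
The task is then to design a test function $g$ that simultaneously exposes the $\IQR$ scale of $f$ and the $r$ scale of the Gaussian smoothing.

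I would take $g$ to be a \emph{clipped identity}. Let $q_1,q_3$ be the 25th and 75th percentiles of $f$, so $q_3-q_1=\IQR$ and $\Pr[X\in[q_1,q_3]] \ge 1/2$. Setting $m=(q_1+q_3)/2$ and $M=\IQR/2+r$, define
\[
g(y) \;=\; \max\bigl(-M,\; \min(M,\; y-m)\bigr).
\]
This function is $1$-Lipschitz and bounded by $M$, so $\E[g(Y)^2] \le M^2 \le (\IQR+r)^2$, while $g'(y) = \1[|y-m|<M]$ a.e. Hence the numerator is $\E[g'(Y)] = \Pr[|Y-m|<M]$.

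To lower bound this probability by a constant, write $Y=X+Z$ with $Z\sim\Normal(0,r^2)$ independent of $X$. Whenever $|X-m|\le \IQR/2$ (equivalently $X\in[q_1,q_3]$) and $|Z|\le r$, the triangle inequality gives $|Y-m|\le M$. Independence of $X$ and $Z$ then yields $\Pr[|Y-m|<M] \ge \tfrac12\cdot(2\Phi(1)-1) = \Omega(1)$, and plugging into the variational inequality gives $\I_r \gtrsim 1/(\IQR+r)^2$, as desired.

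The only technical care is justifying integration by parts: boundary terms vanish because $g$ is bounded and $f_r$, being a Gaussian convolution, decays at infinity, and $\int|f_r'|<\infty$ since $\I_r \le 1/r^2 <\infty$ by Lemma~\ref{lem:IrBounded} (so the lemma is non-trivial). There is no single ``hard step''; the main design choice is selecting a test function whose derivative captures mass on a region of width $\IQR+r$ while keeping the function itself bounded by that same scale, which is precisely what the clipped identity accomplishes.
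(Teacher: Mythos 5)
Your proof is correct, and it takes a genuinely different route from the paper. The paper's own argument is two lines: it invokes a known result (Shevlyakov, Section 3.1) stating that for a smooth density the Fisher information is $\gtrsim 1/R^2$ where $R$ is the $30$th--$70$th percentile range, and then argues via a coupling that the $30$--$70$ range of $f_r$ is $\IQR + O(r)$. You instead give a self-contained proof from the variational (Cram\'er--Rao/Cauchy--Schwarz) characterization $\I_r \ge (\E[g'(Y)])^2/\E[g(Y)^2]$, with the clipped-identity test function at scale $M = \IQR/2 + r$; the numerator is bounded below by $\Pr[X \in [q_1,q_3]]\Pr[|Z|\le r] \ge \tfrac12(2\Phi(1)-1)$, which works directly with the quantiles of $f$ and so absorbs the paper's coupling step into a one-line probability bound. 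Your calculation is sound: $g$ is bounded and Lipschitz, $f_r$ is $C^1$, vanishes at infinity, and has $\int |f_r'| \le \sqrt{\I_r} < \infty$ by Lemma~\ref{lem:IrBounded}, so the integration by parts is legitimate, and the distinction between $|Y-m|<M$ and $|Y-m|\le M$ is harmless since $Y$ has a density. What your approach buys is an explicit universal constant (roughly $\bigl(\tfrac12(2\Phi(1)-1)\bigr)^2$) and independence from an external reference; what the paper's approach buys is brevity and reuse of a standard quantile-based Fisher-information inequality. Both establish the stated $\I_r \gtrsim 1/(\IQR+r)^2$ with a constant independent of $f$ and $r$.
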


Next, we show that, fixing a point close to the true parameter $\lambda$, the empirical score function evaluated at that point will concentrate around its expectation for smoothed distributions.

\begin{restatable}{corollary}{CorScoreEstimate}
\label{cor:scoreestimate}
Let $f$ be an arbitrary distribution and let $f_r$ be the $r$-smoothed version of $f$.
That is, $f_r(x) = \E_{y \from f}[\frac{1}{\sqrt{2\pi r^2}}e^{-\frac{(x-y)^2}{2r^2}}]$.
Consider the parametric family of distributions $f_{r}^{\lambda}(x) = f_r(x-\lambda)$.
Suppose we take $n$ i.i.d.~samples $y_1,\ldots,y_n \from f_r^\lambda$, and consider the empirical score function $\hat{s}$ mapping a candidate parameter $\hat{\lambda}$ to $\frac{1}{n}\sum_i s_r(y_i-\lambdahat)$, where $s_r$ is the score function of $f_r$.

Then, for any $|\eps| \le r/2$,

\[
  \Pr_{y_i \overset{i.i.d.}{\sim} f_r^\lambda}\left(|\hat{s}(\lambda+\eps) - \E_{x\from f_r}[s(x-\eps)]| \ge \sqrt{\frac{2\max(\E_x[s^2_r(x - \eps)], \I_r)\log\frac{2}{\delta}}{n}} + \frac{15\log\frac{2}{\delta}}{nr}\right) \le \delta
\]
\end{restatable}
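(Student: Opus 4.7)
The plan is to rewrite $\hat s(\lambda+\eps) = \frac{1}{n}\sum_i W_i$, where $W_i := s_r(y_i - \lambda - \eps)$. Since $Y_i := y_i - \lambda \sim f_r$, the $W_i$'s are i.i.d.\ copies of a random variable $W$ whose mean is exactly $\E_{x\sim f_r}[s_r(x-\eps)]$. The target bound has the familiar Bernstein shape $\sqrt{2\sigma^2\log(2/\delta)/n} + O(M\log(2/\delta)/n)$, so the task reduces to supplying suitable variance and magnitude inputs $\sigma^2, M$ for $W$ and invoking Bernstein's inequality.

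The variance input is immediate: $\mathrm{Var}(W) \le \E[W^2] = \E_{x\sim f_r}[s_r^2(x-\eps)]$, which is exactly one of the two quantities in the stated maximum; the $\I_r$ alternative is a looser but sometimes-cleaner variance bound that can be obtained by comparing the shifted second moment with the unshifted one (using $|\eps|\le r/2$) or by invoking Lemma~\ref{lem:IrBounded}.

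For the magnitude bound I would use Tweedie's formula $r^2 s_r(y) = -\E[Z_r \mid Y' = y]$, where $Y' = X + Z_r$ with $X\sim f$ and $Z_r\sim\Normal(0,r^2)$ independent. Jensen then gives $r^{2k}|s_r(y)|^k \le \E[|Z_r|^k \mid Y' = y]$ pointwise, and integrating under $Y'\sim f_r$ yields Gaussian-type moment bounds $\E[|s_r(Y')|^k] \le (C\sqrt k/r)^k$; i.e.\ $s_r(Y')$ is sub-Gaussian with proxy $\Theta(1/r^2)$ under $f_r$. The hypothesis $|\eps|\le r/2$ lets me transfer this tail bound from $s_r(Y')$ under $f_r$ to $s_r(Y-\eps)$ under $Y\sim f_r$: writing $Y - \eps \sim f_r^{-\eps}$ and comparing the densities $f_r^{-\eps}$ and $f_r$ (which remain within a bounded pointwise ratio on the bulk of the mass when $|\eps|\le r/2$), the sub-Gaussian proxy inflates by at most a constant. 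With these inputs, applying a Bernstein inequality for sub-exponential sums---or equivalently truncating $W$ at $|W|\le M = \Theta(\log(2/\delta)/r)$, applying standard bounded Bernstein to the truncated sum, and controlling the tail via the moment bound---yields the claimed concentration.

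The principal obstacle is the magnitude step, because $s_r$ is not pointwise bounded (e.g.\ if $f$ has a Dirac spike, $s_r$ grows linearly away from it). A truncation-plus-tail argument is therefore essential, and carefully tracking constants through this argument is what pins down the explicit $15$ in the additive $\log(2/\delta)/(nr)$ term. The hypothesis $|\eps|\le r/2$ is used precisely to keep the change-of-measure factor between $f_r$ and $f_r^{-\eps}$ bounded, which is what permits the sharp constants in the statement.
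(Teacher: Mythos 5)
Your reduction to a Bernstein-type bound for $W_i = s_r(y_i-\lambda-\eps)$ and your variance input are fine, but the magnitude step is where the entire content of this corollary lives, and your version of it is too weak to yield the stated inequality. The claim is a sub-Gamma (Bernstein) bound whose variance factor is $\max(\E_x[s_r^2(x-\eps)],\I_r)$---which can be far smaller than $1/r^2$ (e.g.\ $f$ Gaussian with standard deviation $\sigma\gg r$, where this factor is $\approx 1/\sigma^2$)---and whose scale parameter is $15/r$, giving an additive term \emph{linear} in $\log\frac{2}{\delta}$. To obtain this by a Chernoff/MGF argument you need the moment condition $\E_x[|s_r(x+\eps)|^k]\le \frac{k!}{2}(15/r)^{k-2}\max(\E_x[s_r^2(x+\eps)],\I_r)$ for all $k\ge 3$; this is precisely the paper's Lemma~\ref{lem:offcenter-score-moments}, and proving it is the hard part. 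Your Tweedie-plus-Jensen bound only gives $\E[|s_r|^k]\le (C\sqrt{k}/r)^k$, i.e.\ sub-Gaussianity with proxy $\Theta(1/r^2)$: already at $k=3$ this is $\Theta(1/r^3)$, which is much larger than $\frac{3!}{2}\cdot\frac{15}{r}\cdot\max(\E[s_r^2],\I_r)$ whenever the second moment is $\ll 1/r^2$, so it cannot substitute for the needed prefactor. Your fallback---truncate at $M=\Theta(\log(2/\delta)/r)$ and apply bounded Bernstein---produces an additive term $\Theta(M\log(2/\delta)/n)=\Theta(\log^2(2/\delta)/(nr))$, a full $\log(2/\delta)$ factor worse than the stated $\frac{15\log\frac{2}{\delta}}{nr}$, and you would additionally have to control the truncation bias and the union bound over untruncated samples, which needs relations between $n$ and $\delta$ that the statement does not assume. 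The loss is not cosmetic: the downstream $1+o(1)$ analysis in Lemma~\ref{lem:localMLE} absorbs the additive term only because it is linear in $\log\frac{1}{\delta}$.

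A second, related gap is the change of measure you invoke to pass from $s_r(Y)$ under $f_r$ to $s_r(Y-\eps)$: the ratio $f_r(x+\eps)/f_r(x)$ is \emph{not} uniformly bounded for $|\eps|\le r/2$ (take $f$ a mixture of two width-$r$ Gaussians separated by $d\gg r$; between them the ratio is $e^{\Omega(\eps d/r^2)}$), and the region where it blows up is exactly where the score is large, i.e.\ where the moment mass comes from, so restricting to ``the bulk'' does not rescue the argument. The paper's route is structurally different: using Lemma~\ref{lem:p-z} and the derivative bound $s_r'\ge -1/r^2$ (Lemma~\ref{lem:u-lowerderivative}), it shows that wherever $|s_r(x+\eps)|\gtrsim \sqrt{k}/r$, the weighted mass $f_r(x)|s_r(x+\eps)|^k$ is dominated by a constant fraction of the corresponding mass at points shifted by $\pm r$ or $\mp\eps$ (Lemmas~\ref{lem:score-moments}, \ref{lem:eps_pos_s_pos_helper}, \ref{lem:eps_neg_s_pos_helper}); integrating this self-bounding inequality is what produces the prefactor $\max(\E_x[s_r^2(x+\eps)],\I_r)$ instead of $1/r^2$. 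Without an argument of this kind (or some other proof of the sub-Gamma moment bound with the second-moment prefactor), the stated concentration inequality does not follow from your outline.
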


\section{A Finite Sample Analysis of $r$-smoothed Local MLE}
\label{sec:localMLE}

In this section, we analyze Algorithm~\ref{alg:localMLE}, which is our version of local MLE with $r$-smoothing applied.
Algorithm~\ref{alg:localMLE} takes an initial uncertainty region that the true parameter is guaranteed to lie in, and uses the model and the initial interval to refine the estimate to high accuracy.
We first present a simpler and easier-to-interpret version of our result, Theorem~\ref{thm:simpleLocalMLE}, which we stated in Section~\ref{sec:results}.

Recall that Algorithm~\ref{alg:localMLE} computes the empirical score function, and returns any of its roots.
The theorem thus states that, with high probability, for any point $\lambda+\eps$ with $|\eps|$ too large, the empirical score function must be non-zero and thus $\lambda+\eps$ will not returned as the estimate.
More precisely, given an initial interval of length $\eps_{\max}$ as well as the failure probability $\delta$, the theorem assumes that the smoothing parameter $r$ is sufficiently large (conditions 1 and 3 in the theorem) and that the sample size $n$ is sufficiently large, and guarantees an estimation error that is within a $1+o(1)$ factor of the error predicted by the Gaussian with variance $1/\I_r$, where $\I_r$ is the Fisher information of $f_r$.

\corSimpleLocalMLE*

The above theorem follows from the following theorem, which makes the ``$o(1)$" term (the $O(1/\gamma)$ term) in the theorem explicit.
Assumptions 2 and 3 in the theorem statement essentially bounds various multiplicative terms in the estimation error and makes sure that they are ``$1+o(1)$" terms.

\begin{theorem}
\label{thm:localMLE}
Suppose we have a known model $f_r$ that is the result of $r$-smoothing, and a given parameter $\eps_{\max}$.
Let $\beta$ and $\eta$ be the hidden multiplicative constants in Lemmas~\ref{lem:grad_expectation_theta} and~\ref{lem:var_close_to_fisher}.
Further suppose that $r$ satisfies $r \ge 2\eps_{\max}$ and $r^2 \sqrt{\I_r} \ge \gamma \eps_{\max}$ for some parameter $\gamma \ge \beta$.

Now define the notation $\rho_r$ by 
\[ 1+\rho_r = \sqrt{1+\frac{\eta}{\gamma}} + \frac{15}{2\sqrt{\gamma}}\left(\frac{2\log\frac{4\log\frac{1}{\delta}}{r^2 \I_r(1-\frac{\beta}{\gamma})\delta}}{n}\right)^{\frac{1}{4}} \]
Then, for sufficiently small $\delta > 0$, with probability at least $1-\delta$, for all $\eps \in \left((1+\frac{1}{\log\frac{1}{\delta}})\frac{1+\rho_r}{1-\frac{\beta}{\gamma}}\sqrt{1+\frac{\log\frac{4\log\frac{1}{\delta}}{r^2 \I_r (1-\frac{\beta}{\gamma})}}{\log\frac{1}{\delta}}} \sqrt{\frac{2\log\frac{1}{\delta}}{n \I_r}}, \eps_{\max}\right]$, $\hat{s}(\lambda-\eps) < 0$ and $\hat{s}(\lambda+\eps) > 0$.
\end{theorem}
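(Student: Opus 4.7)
The plan is to combine three ingredients—an expected-score lower bound (signal), pointwise concentration of the empirical score (noise), and a union bound over $\eps$—calibrated tightly enough to yield the $1+o(1)$ constants in the stated threshold.

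\emph{Signal step.} For $\eps \in [-r/2, r/2]$, define $g(\eps) := \E_{x \sim f_r}[s_r(x-\eps)]$. Integration by parts gives $g(0) = 0$ and $g'(0) = \I_r$, so a Taylor expansion yields $g(\eps) = \eps\,\I_r + O(\eps^2)$ with the quadratic remainder controlled by derivatives of $s_r$. I expect Lemma~\ref{lem:grad_expectation_theta} to provide exactly the clean form needed, namely $g(\eps) \ge \eps\,\I_r(1-\beta/\gamma)$ for $\eps > 0$ and the symmetric upper bound for $\eps < 0$, with the assumption $r^2\sqrt{\I_r} \ge \gamma\eps_{\max}$ absorbing the quadratic remainder into a $1/\gamma$ multiplicative slack. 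This is the source of the $(1-\beta/\gamma)^{-1}$ factor in the final threshold.

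\emph{Noise step.} At a fixed $\eps$, Corollary~\ref{cor:scoreestimate} bounds $|\hat{s}(\lambda+\eps) - g(\eps)|$ by the sum of a sub-Gaussian deviation with variance proxy $\max(\E_x[s_r^2(x-\eps)], \I_r)/n$ and an additive term $15\log(2/\delta')/(nr)$. Invoking Lemma~\ref{lem:var_close_to_fisher} replaces $\E_x[s_r^2(x-\eps)]$ by at most $(1+\eta/\gamma)\I_r$ whenever $|\eps| \le r/2$, tightening the leading deviation to $\sqrt{2(1+\eta/\gamma)\I_r\log(2/\delta')/n}$. Equating signal to noise at one fixed $\eps$ gives the per-$\eps$ threshold $\frac{\sqrt{1+\eta/\gamma}}{1-\beta/\gamma}\sqrt{\tfrac{2\log(2/\delta')}{n\I_r}} + \frac{15\log(2/\delta')}{nr\I_r(1-\beta/\gamma)}$; rewriting the additive term as a multiplicative correction of the sub-Gaussian term and using $r^2\sqrt{\I_r} \ge \gamma\eps_{\max}$ to convert $1/r$ into a $1/\sqrt{\gamma}$ prefactor produces exactly the $n^{-1/4}$-style factor inside the definition of $1+\rho_r$.

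\emph{Uniform-in-$\eps$ step.} To promote the per-$\eps$ statement to one valid for every $\eps \in [\eps_0,\eps_{\max}]$, where $\eps_0$ denotes the per-$\eps$ threshold, I would discretize this interval on a geometric grid of ratio $2$. The number of gridpoints is $O(\log(\eps_{\max}/\eps_0))$, which, after substituting the assumptions on $r$ and $n$, reduces to $O\!\bigl(\log \tfrac{\log(1/\delta)}{r^2\I_r(1-\beta/\gamma)}\bigr)$, precisely the logarithmic argument appearing both inside $\rho_r$ and inside the $\sqrt{1+\log(\cdot)/\log(1/\delta)}$ prefactor. I would apply the per-$\eps$ bound with $\delta$ shrunk by the gridpoint count, union-bound, and then use monotonicity of $g$ to extend the conclusion from each grid point to its entire bucket, absorbing the slack into the $(1+1/\log(1/\delta))$ factor in the theorem. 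The claim $\hat{s}(\lambda-\eps) < 0$ follows by the identical argument with $\eps \to -\eps$.

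The main obstacle will be the uniform-in-$\eps$ step: preserving $1+o(1)$ tightness forbids any crude discretization or loose handling of between-gridpoint behavior, and it is precisely this step that contributes the $\log\log$ corrections and the $\log\frac{4\log(1/\delta)}{r^2\I_r(1-\beta/\gamma)}$ terms in the theorem. The remaining ingredients—Taylor expansion of $g$, closeness of $\E[s_r^2]$ to $\I_r$, and pointwise concentration—are direct invocations of the lemmas in Section~\ref{sec:rscore}.
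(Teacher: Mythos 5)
Your signal and noise steps match the paper's route exactly (Lemma~\ref{lem:grad_expectation_theta}, Lemma~\ref{lem:var_close_to_fisher}, and Corollary~\ref{cor:scoreestimate} give the per-$\eps$ bound with the $(1-\beta/\gamma)^{-1}$ and $\sqrt{1+\eta/\gamma}$ factors, and the $15/(2\sqrt{\gamma})\,(2\log(1/\delta)/n)^{1/4}$ correction after using $r^2\sqrt{\I_r}\ge\gamma\eps_{\max}$). The gap is in your uniform-in-$\eps$ step, which you correctly identify as the hard part but do not actually resolve. You propose a geometric grid of ratio $2$ and then ``use monotonicity of $g$ to extend the conclusion from each grid point to its entire bucket.'' But $g$ is the \emph{expectation} of the score; monotonicity of $g$ says nothing about the realized empirical score $\hat{s}$ between gridpoints, which is a random, non-monotone function. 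To bridge gaps you need a deterministic regularity property of $\hat{s}$ itself, and the paper supplies exactly this via Lemma~\ref{lem:u-lowerderivative}: $s_r' \ge -1/r^2$ pointwise, so the empirical score, viewed as a function of the candidate parameter, can move against you at rate at most $1/r^2$. Your proposal never invokes this ingredient, and without it the extension step has no justification.

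Moreover, even granting the $1/r^2$ derivative bound, a ratio-$2$ geometric grid is too coarse to preserve $1+o(1)$ tightness: over a gap of width $\approx \eps$ the adversarial drift is $\eps/r^2$, which dominates the signal $\eps\I_r(1-\beta/\gamma)$ whenever $r^2\I_r \ll 1$ (and $r^2\I_r$ can indeed be arbitrarily small; Lemma~\ref{lem:IrBounded} only gives $r^2\I_r\le 1$). Absorbing that drift would force a constant-factor (in fact $\Theta(1/(r^2\I_r))$-factor) inflation of the threshold, not a $(1+1/\log\frac{1}{\delta})$ factor. The paper instead takes an \emph{arithmetic} net of spacing $\xi r^2(1+\rho_r)\sqrt{2\log(1/\delta)\I_r/n}$ with $\xi = 1/\log\frac{1}{\delta}$, demands the small extra margin $\xi(1+\rho_r)\sqrt{2\log(1/\delta)\I_r/n}$ at each net point (which the $1/r^2$ drift over one spacing cannot overcome), and handles the union bound not by uniformly shrinking $\delta$ over the whole grid but by grouping net points into scales $N_i$ where the failure probability decays like $\delta^{i^2}$ (signal linear in $\eps$, noise scale constant). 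Summing gives total failure probability $\approx \delta/(\xi r^2\I_r(1-\beta/\gamma))$, and reparameterizing $\delta$ is what produces the $\log\frac{4\log\frac{1}{\delta}}{r^2\I_r(1-\frac{\beta}{\gamma})\delta}$ terms in $\rho_r$ and the $\sqrt{1+\log(\cdot)/\log\frac{1}{\delta}}$ prefactor; your accounting attributes these to the logarithm of a geometric gridpoint count, which is off by an exponential (the relevant count is $\approx \frac{\log(1/\delta)}{r^2\I_r(1-\beta/\gamma)}$ net points per scale, not its logarithm). So the overall architecture is right, but the discretization mechanism as proposed would fail, and the missing idea is precisely the deterministic score-derivative bound plus the fine net with scale-dependent tail probabilities.
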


To prove Theorem~\ref{thm:localMLE}, it suffices to show the following lemma.
The theorem follows directly by reparameterizing $\delta$ and choosing $\xi$ to be $1/\log\frac{1}{\delta}$.

\begin{restatable}{lemma}{LemLocalMLE}
\label{lem:localMLE}
Suppose we have a known model $f_r$ that is the result of $r$-smoothing with Fisher information $\I_r$, and a given parameter $\eps_{\max}$.
Let $\beta$ and $\eta$ be the hidden multiplicative constants in Lemmas~\ref{lem:grad_expectation_theta} and~\ref{lem:var_close_to_fisher}.
Further suppose that $r$ satisfies $r \ge 2\eps_{\max}$ and $r^2 \sqrt{\I_r} \ge \gamma \eps_{\max}$ for some parameter $\gamma \ge \beta$.
Also define the notation $\tilde{\rho}$ (a ``$o(1)$" term) by
\[ 1+\tilde{\rho} = \sqrt{1+\frac{\eta}{\gamma}} + \frac{15}{2\sqrt{\gamma}}\left(\frac{2\log\frac{1}{\delta}}{n}\right)^{\frac{1}{4}} \]

Then, for every $\xi \ll 1$, with probability at least $1-\delta \cdot \frac{2}{\xi r^2\I_r(1-\frac{\beta}{\gamma})(1-\delta)}$, for all $\eps \in \left((1+\xi)\frac{1+\tilde{\rho}}{1-\frac{\beta}{\gamma}} \sqrt{\frac{2\log\frac{1}{\delta}}{n \I_r}}, \eps_{\max}\right]$, $\hat{s}(\lambda-\eps)$ is strictly negative and $\hat{s}(\lambda+\eps)$ is strictly positive.

\end{restatable}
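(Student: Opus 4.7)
The plan is a pointwise analysis of $\hat{s}(\lambda \pm \eps)$ via expectation and concentration, followed by a union bound over a grid of $\eps$ values to upgrade to uniformity.

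First, fix a single $\eps > 0$ and examine $\hat{s}(\lambda+\eps) = \frac{1}{n}\sum_i s_r(y_i - \lambda - \eps)$. Since $y_i - \lambda \sim f_r$, its mean is $\mu_+(\eps) := \E_{x \sim f_r}[s_r(x - \eps)]$, which satisfies $\mu_+(0) = 0$ and $\mu_+'(0) = \I_r$ by the standard score identities. Lemma~\ref{lem:grad_expectation_theta} then bounds the Taylor remainder to give $\mu_+(\eps) \ge (1 - \beta/\gamma)\,\eps\,\I_r$ under the assumption $r^2 \sqrt{\I_r} \ge \gamma \eps_{\max}$, and symmetrically $\mu_+(-\eps) \le -(1-\beta/\gamma)\,\eps\,\I_r$ for the $\hat{s}(\lambda-\eps)$ side. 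Lemma~\ref{lem:var_close_to_fisher} bounds the relevant variance by $(1+\eta/\gamma)\I_r$, and Corollary~\ref{cor:scoreestimate} (which applies because $\eps_{\max} \le r/2$) then yields the concentration
\[
|\hat{s}(\lambda + \eps) - \mu_+(\eps)| \le \sqrt{\frac{2(1+\eta/\gamma)\,\I_r \log(2/\delta)}{n}} + \frac{15\log(2/\delta)}{nr}
\]
with probability at least $1-\delta$.

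Combining these, $\hat{s}(\lambda+\eps) > 0$ holds whenever $\eps > \frac{1+\tilde\rho'}{1-\beta/\gamma}\,A$, where $A := \sqrt{2\log(1/\delta)/(n\I_r)}$, the $\sqrt{1+\eta/\gamma}$ piece of $1+\tilde\rho'$ comes from the main variance term, and the additive low-order term $15\log(2/\delta)/(nr)$ contributes a piece of order $\frac{1}{r\sqrt{\I_r}}\sqrt{\log(1/\delta)/n}$ once expressed as a multiple of $A$. To arrive at the specific quartic-root form $\frac{15}{2\sqrt{\gamma}}(2\log(1/\delta)/n)^{1/4}$ in the lemma, I would observe that for the stated interval of valid $\eps$ to be nonempty one needs $\eps_{\max} \ge A$, which combined with $r^2 \sqrt{\I_r} \ge \gamma \eps_{\max}$ forces $r^2 \I_r \ge \gamma \sqrt{2\log(1/\delta)/n}$; substituting converts the $\frac{1}{r\sqrt{\I_r}}\sqrt{\log/n}$ scaling into $(\log/n)^{1/4}/\sqrt{\gamma}$, matching the claim.

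To upgrade from pointwise to uniform in $\eps$, I would place a grid on $[\eps^*, \eps_{\max}]$ with $\eps^* = \frac{1+\tilde\rho}{1-\beta/\gamma}A$ (e.g., a geometric grid $\eps_k = (1+\xi)^k \eps^*$), apply the pointwise bound at each grid point with failure parameter $\delta$, and union bound. The number $N$ of grid points is tuned as a function of $\xi$ and the Fisher parameter $r^2\I_r$ so that the total failure probability matches the stated multiplier $\frac{2}{\xi r^2\I_r(1-\beta/\gamma)(1-\delta)}$. For $\eps \in [\eps_k, \eps_{k+1}]$ between grid points, I would use a Lipschitz or covering estimate on $s_r$ coming from the Gaussian convolution, together with monotonicity of $\mu_+$, to conclude that $\hat{s}(\lambda+\eps)$ retains its sign throughout; the extra $(1+\xi)$ factor in the lemma's lower endpoint absorbs this grid slack.

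The main obstacle, I expect, is the careful bookkeeping that makes the per-point Corollary~\ref{cor:scoreestimate} tail and the union bound over the grid combine cleanly into the stated multiplicative $(1+\tilde\rho)$ prefactor rather than something slightly worse. In particular, the substitution producing the quartic-root form uses an $r^2\I_r$ lower bound derived from the very range of $\eps$ the statement is about, so the mutual dependence between $\eps^*$, $\eps_{\max}$, and the final $\tilde\rho$ has to be untangled carefully. A secondary technical point is handling the between-grid interpolation uniformly without introducing additional log factors that would spoil the $1+\tilde\rho = 1+o(1)$ conclusion.
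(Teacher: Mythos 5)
Your proposal matches the paper's own proof in essentially every respect: the same pointwise combination of Lemma~\ref{lem:grad_expectation_theta}, Lemma~\ref{lem:var_close_to_fisher} and Corollary~\ref{cor:scoreestimate}, the same substitution $r^2\sqrt{\I_r}\ge\gamma\eps_{\max}\ge\gamma\sqrt{2\log\frac{1}{\delta}/(n\I_r)}$ to turn the additive Bernstein term into the $\frac{15}{2\sqrt{\gamma}}\left(\frac{2\log\frac{1}{\delta}}{n}\right)^{1/4}$ piece, and a net argument with Lipschitz-type interpolation; the "bookkeeping" you flag is handled in the paper by an arithmetic net of spacing $\xi r^2(1+\rho_r)\sqrt{2\log\frac{1}{\delta}\,\I_r/n}$, the one-sided derivative bound $\hat{s}'\le 1/r^2$ from Lemma~\ref{lem:u-lowerderivative}, and the observation that per-point failure probabilities decay like $\delta^{i^2}$ across blocks of width $\frac{1+\rho_r}{1-\beta/\gamma}\sqrt{2\log\frac{1}{\delta}/(n\I_r)}$, which sums to exactly the stated multiplier $\frac{2}{\xi r^2\I_r(1-\frac{\beta}{\gamma})(1-\delta)}$.
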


We prove Lemma~\ref{lem:localMLE} in Appendix~\ref{app:localMLE}, and here we give a proof sketch.

\begin{proof}[Proof sketch for Lemma~\ref{lem:localMLE}]
First, recall that Corollary~\ref{cor:scoreestimate} from Section~\ref{sec:rscore} shows that fixing a candidate input value $\lambda+\eps$ for some small $\eps$, the value of the empirical score function at $\lambda+\eps$ is well-concentrated around its expectation.
In Lemmas~\ref{lem:grad_expectation_theta} and~\ref{lem:var_close_to_fisher}, we calculate and bound the expectation and second moment of the empirical score function at $\lambda+\eps$ for all sufficiently small $\eps$.
This allows us to derive tail bounds for the empirical score function at each point $\lambda+\eps$, to show that it is bounded away from 0.
Next, we need to show that with high probability, the empirical score function is \emph{simultaneously} bounded away from 0 for all $\eps$ with magnitude greater than the desired estimation accuracy.
We achieve this via a straightforward net argument, crucially utilizing the fact that the expectation of the empirical score function is bounded away from 0 by an essentially linear function in $\eps$, and that the variance is essentially constant in $\eps$.
This means that the probability for the empirical score function at $\lambda+\eps$ to hit 0 is decreasing exponentially in $\eps$, which allows us to complete the net argument.
\end{proof}


\section{Global Two-Stage MLE Algorithm}
\label{sec:globalMLE}

Algorithm~\ref{alg:localMLE}, which we stated in the introduction and analyzed in Section~\ref{sec:localMLE}, is a \emph{local} algorithm that assumes we have knowledge of a non-trivially small uncertainty region containing the true parameter $\lambda$.
The smoothing parameter $r$ can then be computed from the assumptions of Theorems~\ref{thm:localMLE} or~\ref{thm:simpleLocalMLE}, and we run Algorithm~\ref{alg:localMLE} to obtain an accurate estimate of the true parameter $\lambda$, with accuracy predicted by the $r$-smoothed Fisher information $\I_r$.

However, in general, we might not have a-priori knowledge of where the true parameter $\lambda$ lies.
In this section, we propose a \emph{global} maximum likelihood algorithm (Algorithm~\ref{alg:globalMLE}) which first estimates a preliminary interval containing $\lambda$, before choosing the smoothing parameter $r^*$ using an \emph{easily calculable} expression that is $o(1)$ times smaller than the interquartile range of the distribution, and finally applies the local MLE algorithm (Algorithm~\ref{alg:localMLE}) to obtain a final estimate.
Theorem~\ref{thm:simpleGlobalMLE} states that the accuracy of Algorithm~\ref{alg:globalMLE} is always within a $1+o(1)$ times the accuracy predicted by the $r^*$-smoothed Fisher information $\I_{r^*}$.

\begin{algorithm}
\caption{Global MLE for known parametric model}
\label{alg:globalMLE}
\vspace*{-1.5mm}
\paragraph{Input Parameters:} Failure probability $\delta$, description of distribution $f$, $n$ i.i.d.~samples drawn from $f^\lambda$ for some unknown $\lambda$
\begin{enumerate}
\item Compute an $\alpha \in [\sqrt{2\log\frac{4}{\delta}/n}, 1-\sqrt{2\log\frac{4}{\delta}/n}]$ such that the interval defined by the $\alpha-\sqrt{2\log\frac{4}{\delta}/n}$ and $\alpha+\sqrt{2\log\frac{4}{\delta}/n}$ quantiles of $f$ is the smallest.

\item By standard Chernoff bounds, with probability at least $1-\frac{\delta}{2}$, the sample $\alpha$-quantile $x_\alpha$ will be such that $x_\alpha - \lambda$ is within the $\alpha-\sqrt{2\log\frac{4}{\delta}/n}$ and $\alpha+\sqrt{2\log\frac{4}{\delta}/n}$ quantiles of $f$.
Based on this, compute an initial confidence interval $[\ell, u]$ for $\lambda$.


\item Let $r^* = \Omega(\max((\frac{\log \frac{1}{\delta}}{n})^{1/8},2^{-O(\sqrt{\log\frac{1}{\delta}})})) \IQR$.
\item Run Algorithm~\ref{alg:localMLE} on the interval $[\ell, u]$, using $r^*$-smoothing and failure probability $\delta/2$, returning the final estimate $\lambdahat$.
\end{enumerate}

\end{algorithm}

\begin{theorem}[Global MLE Theorem]
\label{thm:simpleGlobalMLE}
Given a model $f$, let the $r$-smoothed Fisher information of a distribution $f$ be $\I_r$, and let $\IQR$ be the interquartile range of $f$.
Fix the failure probability be $\delta$.

Choose $r^* =  \Omega(\max((\frac{\log \frac{1}{\delta}}{n})^{1/8},2^{-O(\sqrt{\log\frac{1}{\delta}})})) \IQR$.
Then, with probability at least $1-\delta$, the output $\lambdahat$ of Algorithm~\ref{alg:globalMLE} satisfies
\[ |\lambdahat - \lambda| \le \left(1+O\left(\frac{\log \frac{1}{\delta}}{n}\right)^{\frac{1}{4}} + O\left(\frac{1}{\sqrt{\log\frac{1}{\delta}}}\right) \right)\sqrt{\frac{2\log\frac{1}{\delta}}{n \I_{r^*}}}\]
\end{theorem}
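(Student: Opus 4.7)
The overall plan is to decompose the analysis along the two stages of Algorithm~\ref{alg:globalMLE}: first handle the quantile-based initialization by standard concentration, then feed the resulting confidence interval into Theorem~\ref{thm:simpleLocalMLE} with the prescribed smoothing parameter $r^*$, and finally simplify the resulting error expression to match the stated bound. A union bound over the two failure events (each at level $\delta/2$) gives the claimed $\delta$ probability.

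For the first stage, I would apply Hoeffding's inequality to the indicators $\1[x_i \le t]$ (equivalently, DKW) to argue that, with probability at least $1 - \delta/2$, every empirical $\alpha$-quantile $x_\alpha$ corresponds to a population quantile of $f^\lambda$ lying within $\sqrt{2\log(4/\delta)/n}$ of $\alpha$. Hence $x_\alpha - \lambda$ is sandwiched between the $\alpha \pm \sqrt{2\log(4/\delta)/n}$ quantiles of $f$, and by the algorithm's choice of the minimizing $\alpha$, the resulting interval $[\ell,u]$ has width at most that of the interquartile range (since $\sqrt{2\log(4/\delta)/n} \le 1/4$ by the assumption $n \gg \log(1/\delta)$). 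This gives an $\eps_{\max} \lesssim \IQR$ to feed into Step 4.

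For the second stage, I would set $\gamma$ to be a slowly growing function of $\log(1/\delta)$ (on the order of $\sqrt{\log(1/\delta)}$) and verify the three hypotheses of Theorem~\ref{thm:simpleLocalMLE} at the chosen $r^*$. Lemmas~\ref{lem:IrBounded} and \ref{lem:IrLB} pin down $\sqrt{\I_{r^*}} \asymp 1/(\IQR + r^*)$, so hypothesis (1) becomes $r^{*2}/\IQR \gtrsim \gamma \IQR$, which is exactly what the $(\log(1/\delta)/n)^{1/8}\IQR$ branch of $r^*$ secures (along with the stronger consequence $r^{*2}\I_{r^*} \gtrsim (\log(1/\delta)/n)^{1/4}$, which controls the dominant error term). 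Hypothesis (2), $(\log(1/\delta))/n \le 1/\gamma^2$, follows from $n \gg \log(1/\delta)$. The second branch $2^{-O(\sqrt{\log(1/\delta)})}\IQR$ in the definition of $r^*$ is engineered precisely so that $\log(1/(r^*\sqrt{\I_{r^*}})) = O(\sqrt{\log(1/\delta)})$, which is $o(\log(1/\delta)/\log\log(1/\delta))$, giving hypothesis (3). I would then invoke Theorem~\ref{thm:simpleLocalMLE} (or its quantitative refinement Theorem~\ref{thm:localMLE}) at failure probability $\delta/2$ to conclude that any root $\lambdahat$ of the empirical score function inside $[\ell,u]$ satisfies the claimed bound.

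The main obstacle will be carefully propagating the various $1 + o(1)$ factors in Theorem~\ref{thm:localMLE} to match the target expression $1 + O((\log(1/\delta)/n)^{1/4}) + O(1/\sqrt{\log(1/\delta)})$. Specifically, the $(\log(1/\delta)/n)^{1/4}$ term comes from the second summand of $\rho_r$, and requires the lower bound $r^{*2}\I_{r^*} \gtrsim (\log(1/\delta)/n)^{1/4}$ so that the logarithm inside $\rho_r$ is absorbed into the constant. The $1/\sqrt{\log(1/\delta)}$ term emerges both from the $\beta/\gamma$ and $\eta/\gamma$ corrections (with $\gamma = \Theta(\sqrt{\log(1/\delta)})$) and from the factor $\sqrt{1 + \log(4\log(1/\delta)/(r^{*2}\I_{r^*}(1-\beta/\gamma)))/\log(1/\delta)}$, which collapses to $1 + O(1/\sqrt{\log(1/\delta)})$ once the aforementioned lower bound on $r^{*2}\I_{r^*}$ is used. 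Tracking these bookkeeping terms, and in particular justifying that the second branch of $r^*$ is tight enough to keep hypothesis (3) satisfied without degrading $\I_{r^*}$ relative to $\I_{r}$ for nearby $r$, is the most delicate part of the argument.
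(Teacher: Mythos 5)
Your overall architecture (union bound over the two stages, feeding the quantile-based interval into Theorem~\ref{thm:simpleLocalMLE}, using Lemmas~\ref{lem:IrBounded} and~\ref{lem:IrLB} to control $\I_{r^*}$, and tracking the $o(1)$ factors) matches the paper's proof, but there is a genuine gap in the first stage that propagates into a false claim in the second. You only extract $\eps_{\max} \lesssim \IQR$ from the minimizing choice of $\alpha$, whereas the paper's proof crucially uses minimality to get $\eps_{\max} \le O\bigl(\sqrt{\log\frac{1}{\delta}/n}\bigr)\IQR$: the probability-width-$2\sqrt{2\log(4/\delta)/n}$ quantile windows tile the central half of the mass, so by averaging the smallest such window has $x$-width at most $O\bigl(\sqrt{\log\frac{1}{\delta}/n}\bigr)\IQR$. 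With your weaker bound, hypothesis (1) of Theorem~\ref{thm:simpleLocalMLE} reads $r^{*2}\sqrt{\I_{r^*}} \gtrsim \gamma\,\IQR$, and the prescribed $r^* = \Omega\bigl((\log\frac{1}{\delta}/n)^{1/8}\bigr)\IQR$ does \emph{not} secure it: since $\sqrt{\I_{r^*}} \lesssim 1/\IQR$ and $r^* \ll \IQR$, one has $r^{*2}\sqrt{\I_{r^*}} \approx (\log\frac{1}{\delta}/n)^{1/4}\IQR \ll \IQR \le \gamma\,\IQR$, so your assertion that the $(\log\frac{1}{\delta}/n)^{1/8}$ branch "secures" hypothesis (1) is incorrect as stated. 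The sharper $\eps_{\max}$ bound is exactly what makes hypothesis (1) hold, and it is also what allows the correct choice of parameters.

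A second, related issue is your use of a single $\gamma \asymp \sqrt{\log\frac{1}{\delta}}$. Even after fixing $\eps_{\max}$, verifying hypothesis (1) with $\gamma = \sqrt{\log\frac{1}{\delta}}$ requires $(\log\frac{1}{\delta}/n)^{1/4}\IQR \gtrsim \sqrt{\log\frac{1}{\delta}}\cdot\sqrt{\log\frac{1}{\delta}/n}\,\IQR$, i.e.\ $n \gtrsim (\log\frac{1}{\delta})^{3}$, an assumption not present in the theorem. The paper instead verifies conditions (1)--(2) with $\gamma_1 = O(n/\log\frac{1}{\delta})^{1/4}$ and condition (3) (together with the $\log\log\frac{1}{\delta}/\log\frac{1}{\delta}$ term) with $\gamma_2 = O(\sqrt{\log\frac{1}{\delta}})$, and the conclusion degrades as $1 + O(1/\gamma_1) + O(1/\gamma_2)$, which is precisely the two error terms $O\bigl((\log\frac{1}{\delta}/n)^{1/4}\bigr) + O\bigl(1/\sqrt{\log\frac{1}{\delta}}\bigr)$ in the statement. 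Your bookkeeping of where the two error terms come from in $\rho_r$ is broadly on target, but the argument needs the stronger $\eps_{\max}$ bound and the two-parameter verification to go through.
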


\begin{proof}
  The total failure probability of the steps is at most $\delta$.
  Thus, in this proof we condition on the success of Algorithm~\ref{alg:localMLE} in all probabilistic steps.

    By the minimality condition in the definition of $\alpha$, the length $\eps_{\max}$ of the interval $[\ell, u]$ from Step 2 is at most $O(\sqrt{\log\frac{1}{\delta}/n})\IQR$.
    
    Further, recall by Lemma~\ref{lem:IrLB} that $\I_r \ge \Omega(1/(\IQR+r)^2)$.
    Picking $r^* = \Omega(\max((\frac{\log \frac{1}{\delta}}{n})^{1/8},2^{-O(\sqrt{\log\frac{1}{\delta}})})) \IQR$ and $\gamma_1 = O(\frac{n}{\log \frac{1}{\delta}})^{1/4}$ and $\gamma_2 = O(\sqrt{\log\frac{1}{\delta}})$, we check that the following conditions are satisfied:
    \begin{enumerate}
        \item $(r^*)^2\sqrt{\I_{r^*}} \ge (r^*)^2/(\IQR+r^*) \ge \Omega(\frac{\log \frac{1}{\delta}}{n})^{1/4} \IQR = \gamma_1 \eps_{\max}$.
        \item $\log\frac{1}{\delta}/n \le O(\sqrt{\log\frac{1}{\delta}/n}) \le 1/\gamma_1^2$.
        \item $\log 1/(r^*\sqrt{\I_{r^*}}) \le O(\log 2^{O(\sqrt{\log\frac{1}{\delta}})}) = O(\frac{1}{\gamma_2}\log\frac{1}{\delta})$.
    \end{enumerate}
    Further note that $\log\log\frac{1}{\delta}/\log\frac{1}{\delta} \le 1/\sqrt{\log\frac{1}{\delta}} = O(1/\gamma_2)$.
    
    Thus, using Theorem~\ref{thm:simpleLocalMLE}, Step 4 returns an estimate $\lambdahat$ satisfying
    \[ |\lambdahat - \lambda| \le \left(1+O\left(\frac{1}{\gamma_1}\right) + O\left(\frac{1}{\gamma_2}\right) \right)\sqrt{\frac{2\log\frac{1}{\delta}}{n \I_{r^*}}}\]
    which is equivalent to the theorem statement.
\end{proof}

\section{High Probability Cram\'{e}r-Rao Bound}
\label{sec:lb}

Complementing our algorithmic results, we also give new results on \emph{lower bounding} the estimation error in the location parameter model.
The celebrated Cram\'{e}r-Rao bound lower bounds the variance of estimators, which does not readily translate to (tight) lower bounds on the distribution tail of the estimation error.
In this section, we show that it is possible to derive a high probability version of the Cram\'{e}r-Rao lower bound for $r$-smoothed distributions, where, given a failure probability $\delta$, we lower bound the estimation error to within a $1+o(1)$-factor of the error predicted by the asymptotic normality of the standard maximum likelihood algorithm, namely the Gaussian with the true parameter as the mean, and variance $1/(n\I_r)$ for estimation using $n$ samples.


\ThmLB*

We prove Theorem~\ref{thm:lowerbound} in Appendix~\ref{app:lb}.
The high-level technique we use is a standard one, showing that, it is statistically impossible to distinguish two slightly shifted copies of $f_r$ with probability $1-\delta$, using $n$ samples.
The shift corresponds to (twice) the estimation accuracy lower bound.  The difficulty lies in getting the right constant.

Standard inequalities for showing indistinguishability results rely on calculating either the squared Hellinger distance~\cite{bar-yossef:2002complexity} or the KL-divergence~\cite{bretagnolle:1979estimation} between the two distributions.
While these inequalities are straightforward to apply, given the calculated bounds on these statistical distances/divergences, the inequalities only yield constant-factor tightness in the estimation accuracy lower bound.
On the other hand, in this work, we aim to give accuracy upper and lower bounds that are matching strongly, to within $1+o(1)$ factors.
As such, our proof of Theorem~\ref{thm:lowerbound} involves delicate and non-standard bounding techniques which may be of independent interest.
The proof techniques are currently slightly ad-hoc, and for future work, we hope to improve on these techniques to make them more general and more usable.

\section{Experimental Results}
\label{sec:experiments}

In this section, we give experimental evidence supporting our proposed algorithmic theory.
Our goals are to demonstrate that 1) $r$-smoothing is a beneficial pre-processing to the MLE, that $r$-smoothed Fisher information does capture the algorithmic performance in location estimation and 2) $r$-smoothed MLE can outperform the standard MLE, as well as standard mean estimation algorithms which do not leverage information about the distribution shape.

The version of smoothed MLE we use for experimentation is even simpler than Algorithm~\ref{alg:localMLE}: use Gaussian smoothing before performing actual maximum likelihood finding over the entire real line, instead of returning a root of the empirical score function.
This is closest to what statisticians would do in practice, and further does not require any initial uncertainty region on the true parameter $\lambda$.

\looseness=-1 We use the Gaussian-spiked \emph{Laplace} model for experiments, with a Laplace distribution of density proportional to $e^{-|x|}$, and a Gaussian of mass 0.001 and width roughly 0.002 (the discretization granularity) added at $x=4$.
The reason we choose the Laplace over the Gaussian as the ``body" of the distribution is because, fixing the variance of the distribution, the Laplace has twice the Fisher information as the Gaussian.
Given that standard mean estimation algorithms only aim to achieve sub-Gaussian concentration, choosing the Laplace as the core distribution lets us demonstrate that the smoothed MLE can outperform mean estimation algorithms even in finite samples.
We also note that this example is crucially different from the Dirac $\delta$-spiked example in Section~\ref{sec:obstacles}.
The Dirac $\delta$ spike has infinity density, whereas the narrow Gaussian spike only has somewhat large, but finite density.
Given the finite and not too large density in the spike, in our experiments, the basic MLE algorithm will \emph{not} ``fit it at any cost", and instead has a smoother error distribution whenever we do not observe samples from the Gaussian spike.
Nonetheless, even in this milder setting, we demonstrate that our smoothed MLE algorithm performs better than the original MLE.

Figure~\ref{fig:heatmap} is a heat map of the mean squared error of the smoothed MLE.
The $x$-axis varies the number of samples $n$ from 50 to 5000, and the $y$-axis varies the smoothing parameter $r$ from 0.001 to 1 in log scale.
Lighter color indicates a smaller mean squared error.
The line overlaid on the heat map indicates, for each value of $n$, the value of $r$ with the smallest mean squared error.
As $n$ increases, the optimal value of $r$ decreases, as predicted by our theory.

For small values of $n$---below about 1000---the mean squared error first decreases then increases again as we increase the smoothing parameter $r$.
This confirms the theory in the paper: for small $n$, it is unlikely that we see any samples from the spike, in which case too small values for $r$ cause MLE to overfit.
On the other hand, too large values of $r$ simply add too much noise, and also yields a sub-optimal mean squared error.
The optimal value of $r$ is thus somewhere in between.

The situation changes when $n \gg 1000$, which is 1 over the mass of the spike.
In this case, we expect to typically see samples from the spike, which allows us to estimate the mean highly accurately.
Any smoothing just adds noise, and hence the optimal value of $r$ is close to 0.

\begin{figure}
\begin{subfigure}{.3\textwidth}
    \centering
    \includegraphics[keepaspectratio,width=4.6cm]{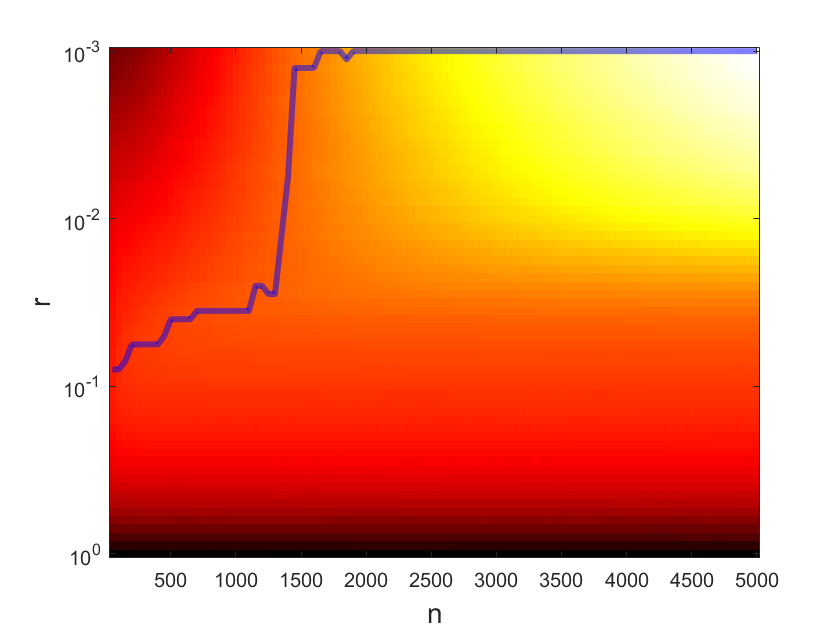}
    \caption{Heat map of mean squared error}
    \label{fig:heatmap}
\end{subfigure}
\begin{subfigure}{.34\textwidth}
    \centering
    \includegraphics[keepaspectratio,width=5.3cm]{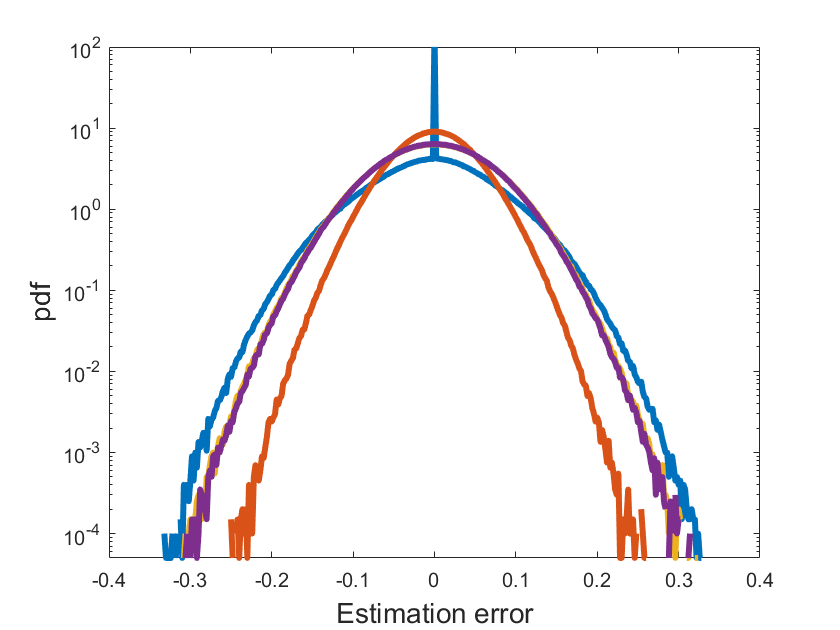}
    \caption{Estimation error distribution,\\ n = 500}
    \label{fig:error_n500}
\end{subfigure}
\begin{subfigure}{.34\textwidth}
    \centering
    \includegraphics[keepaspectratio,width=5.3cm]{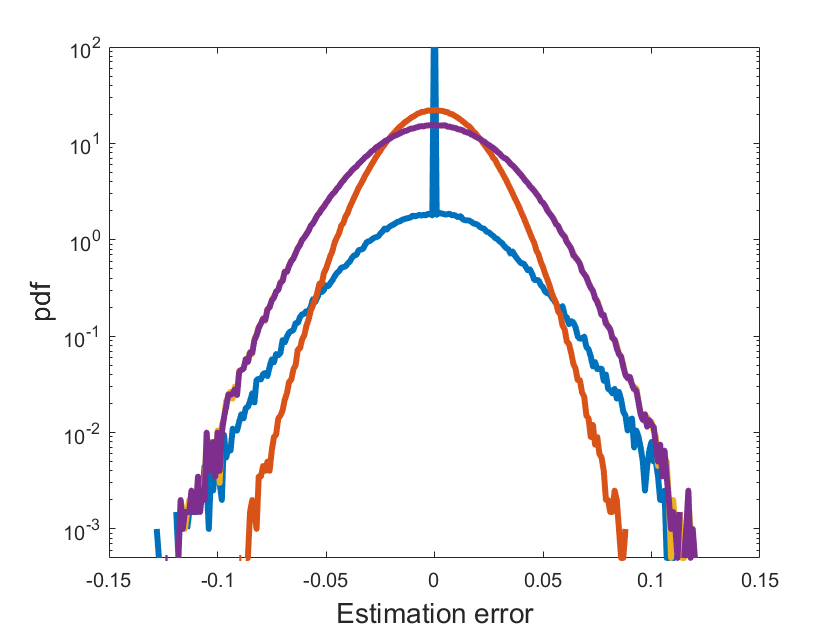}
    \caption{Estimation error distribution,\\ n = 3000}
    \label{fig:error_n3000}
\end{subfigure}
\caption{Experimental results - Spiked Laplace model}
\end{figure}

Figure~\ref{fig:error_n500} picks $n=500$ and compares the distribution of estimation errors across different algorithms: unsmoothed MLE (blue), 0.05-smoothed MLE (orange), empirical mean (yellow), Lee-Valiant (LV) estimator (purple) using $\delta = e^{-5}$.
With only 500 samples, the unsmoothed MLE occasionally sees a sample from the spike, and attains high accuracy, but otherwise has large variance in error, compared with the empirical and LV estimators (which have essentially identical performance, so yellow is overlapped by purple on the plot).
With just 0.05-smoothing, MLE outperforms all other estimators.

Figure~\ref{fig:error_n3000} picks $n=3000$ and compares the same algorithms.
The unsmoothed MLE sees samples from the spike most of the time, and attains high accuracy, vastly outperforming the empirical and LV estimators (again, yellow is overlapped by purple).
The 0.05-smoothed MLE performs worse than the unsmoothed MLE in the typical case, but has better tail behavior.
This plot suggests that the optimal smoothing parameter $r$ in the high probability regime depends on the desired failure probability $\delta$.

\section{Future Directions}


\looseness=-1 
One natural goal is to extend these techniques to estimate the mean of \emph{unknown} distributions by means of a kernel density estimate (KDE), with accuracy dependent on the true distribution's Fisher information.  In general this cannot work, a bias independent of the Fisher information is unavoidable, but for \emph{symmetric} distributions the bias is zero and one can hope for good results.  An asymptotic version of this was shown by Stone~\cite{stone1975adaptive}, and we believe our techniques could get a finite-sample guarantee here.


A second direction is to investigate ways to generalize and simplify our lower bound analysis techniques.
Recall that, while standard ``indistinguishability" bounds based on squared Hellinger distance and KL-divergence are relatively straightforward to apply, they generally lose constant factors.
Our analysis is tight to within a $1+o(1)$-factor, but it requires analyzing several different parameters of the distribution.
One can hope to extend and generalize these techniques to yield a new easy-to-apply bound, similar to those based on Hellinger distance and KL-divergence, that gives $1+o(1)$-factor tightness.


\section*{Acknowledgements}

Shivam Gupta and Eric Price are supported by NSF awards CCF-2008868, CCF-1751040 (CAREER), and the NSF AI Institute for Foundations of Machine Learning (IFML).
Jasper C.H.~Lee is supported in part by the generous funding of a Croucher Fellowship for Postdoctoral Research and by NSF award DMS-2023239.
Paul Valiant is supported by NSF award CCF-2127806.

\bibliographystyle{alpha}
\bibliography{references}

\newpage
\appendix

\section{Proofs from Section~\ref{sec:rscore}}
\label{app:rscore}
We first prove a utility lemma, Lemma~\ref{lem:p-z}, which we use throughout the rest of the paper.

\begin{restatable}{lemma}{LemPZ}
\label{lem:p-z}
Let $f$ be an arbitrary distribution and let $f_r$ be the $r$-smoothed version of $f$. That is, $f_r = \E_{y \leftarrow f}\left[\frac{1}{\sqrt{2 \pi r^2}} e^{-\frac{(x- y)^2}{2 r^2}}\right]$. Let $s_r$ be the score function of $f_r$. Let $(X, Y, Z_r)$ be the joint distribution such that $Y \sim f$, $Z_r \sim \mathcal N(0, r^2)$ are independent, and $X = Y + Z_r \sim f_r$. We have, for every $\eps > 0$,
  \[
    \frac{f_r(x+\eps)}{f_r(x)} = \E_{Z_r \mid x}\left[e^{\frac{2 \eps Z_r - \eps^2}{2 r^2}}\right]
    \quad
  \text{and in particular}
  \quad
    s_r(x) = \E_{Z_r \mid x}\left[\frac{Z_r}{r^2}\right]
  \]
\end{restatable}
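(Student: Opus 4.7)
The plan is to prove the ratio identity directly from the convolution definition, and then derive the score-function identity either by specializing to the limit $\eps \to 0$ or by a direct differentiation argument. Both steps amount to manipulating Gaussian densities.

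For the first identity, I would start from the definition $f_r(x) = \int f(y)\,\phi_r(x-y)\,\d y$ where $\phi_r$ denotes the density of $\mathcal{N}(0,r^2)$. Writing $f_r(x+\eps) = \int f(y)\,\phi_r(x+\eps-y)\,\d y$ and factoring out $\phi_r(x-y)$ inside the integrand gives
\[
f_r(x+\eps) = \int f(y)\,\phi_r(x-y)\,\frac{\phi_r(x+\eps-y)}{\phi_r(x-y)}\,\d y.
\]
A direct computation of the Gaussian ratio yields
\[
\frac{\phi_r(x+\eps-y)}{\phi_r(x-y)} = \exp\!\left(\frac{2\eps\,(y-x) - \eps^2}{2r^2}\right),
\]
which is an exponential in the variable $y-x$ (equivalently, in $Z_r$ under the appropriate sign convention relating $X$, $Y$, and $Z_r$). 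Recognizing the remaining factor $f(y)\,\phi_r(x-y)/f_r(x)$ as the density of $Y$ (equivalently $Z_r$) conditional on $X=x$, one divides through by $f_r(x)$ and rewrites the integral as a conditional expectation, yielding the first claim.

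For the second identity, I would differentiate the first identity with respect to $\eps$ and set $\eps=0$. The left-hand side's derivative at $\eps=0$ is $f_r'(x)/f_r(x) = s_r(x)$ by definition of the score. On the right-hand side, interchanging derivative and expectation (justified by dominated convergence, since the integrand and its derivative are controlled by Gaussian tails) gives
\[
\frac{\d}{\d\eps}\bigg|_{\eps=0} \E_{Z_r\mid x}\!\left[e^{(2\eps Z_r - \eps^2)/(2r^2)}\right] = \E_{Z_r\mid x}\!\left[\frac{Z_r}{r^2}\right].
\]
Alternatively, one can derive the score identity directly by using $\phi_r'(u) = -(u/r^2)\phi_r(u)$ to differentiate under the integral sign in the definition of $f_r$, then dividing by $f_r(x)$ to recognize the conditional expectation.

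There is no genuine obstacle here: the lemma is essentially a completion-of-squares identity for Gaussian convolutions, and the main care required is bookkeeping the sign conventions relating $X$, $Y$, $Z_r$ (and correspondingly the conditional density of $Z_r$ given $X=x$) so that the final expressions match the stated form. Once that conditional density is written down as $p(z\mid x) = f(x-z)\phi_r(z)/f_r(x)$, both identities drop out of the computation with almost no additional work.
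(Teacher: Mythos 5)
Your proposal is correct and takes essentially the same route as the paper's proof: write $f_r(x+\eps)$ as the convolution integral, factor out the Gaussian ratio $w_r(z+\eps)/w_r(z)$, recognize the conditional density $p(z\mid x)=f(x-z)w_r(z)/f_r(x)$ so the integral becomes a conditional expectation, and then obtain the score identity by differentiating at $\eps=0$ (the paper uses exactly this differentiation step). The only point requiring care is the one you flag, namely the sign convention relating $Z_r$ to $X-Y$ in the exponent, which matches the bookkeeping in the paper's argument.
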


\begin{proof}

  For simplicity of exposition, we only show the case where $f$ has a density. The general case can be proven by, for example, a limit argument. Let $w_r$ be the pdf of $\mathcal N(0, r^2)$. First, we show that for any $x, \eps$ we have
  \begin{align}
    \frac{f_r(x + \eps)}{f_r(x)} = \E_{Z_r \mid x} \left[\frac{w_r(Z_r + \eps)}{w_r(Z_r)}\right]\label{eq:cond}
  \end{align}
  Denote the density of $(x, z, \wh{x})$ by $p(\cdot)$.  Note that
  \[
    p(z \mid x) = \frac{p(x, z)}{p(x)} = \frac{f(x - z) w_r(z)}{f_r(x)}
  \]
  and hence
  \begin{align*}
    f_r(x + \eps)  &= \int_{-\infty}^\infty w_r(z) f(x + \eps - z) \,\d z = \int_{-\infty}^\infty w_r(z + \eps) f(x - z)\, \d z \\
                 &= \int_{-\infty}^\infty p(z \mid x) f_r(x) \frac{w_r(z + \eps)}{w_r(z)} \,\d z\\
                 &= f_r(x) \E_{Z \mid x}\left[\frac{w_r(Z_r + \eps)}{w_r(Z_r)}\right]
  \end{align*}
  proving~\eqref{eq:cond}. 

  Since $w_r(z) = \frac{1}{\sqrt{2 \pi r^2}} e^{-\frac{z^2}{2r^2}}$, this gives
  \[
    \frac{f_r(x + \eps)}{f_r(x)} = \E_{Z \mid x}\left[e^{\frac{2 \eps Z_r - \eps^2}{2r^2}}\right].
  \]
  Taking the derivative with respect to $\eps$ and evaluating at $\eps = 0$,
  \[
    \frac{f_r'(x)}{f_r(x)} = \E_{Z \mid x} \frac{Z_r}{r^2}.
  \]
\end{proof}

We now prove Lemmas~\ref{lem:IrBounded} and~\ref{lem:IrLB}, which upper and lower bound the $r$-smoothed Fisher information $\I_r$ respectively.

\LemIrBounded*

\begin{proof}
  Using Lemma~\ref{lem:p-z} and Jensen's inequality,
\[
      I_r = \E_x[s_r^2(x)] = \E_x[(\E_{Z_r|x} Z_r/r^2)^2] \le \E_x[\E_{Z_r|x} Z_r^2/r^4] = 1/r^2 \qedhere\]
\end{proof}

\LemIrLB*

\begin{proof}
  First, observe that $f_r$ is a smooth distribution in the sense that it is differentiable, and furthermore, its derivative is continuous.
  Thus, letting $R$ be the $30^{\text{th}}$-$70^{\text{th}}$ percentile range of $f_r$.
  Then, by a known result~\cite{Shevlyakov:2011} (Section 3.1), $I_r \gtrsim 1/R^2$.
  
  Furthermore, it is easy to see via a coupling argument that $R = \IQR + O(r)$.
  The lemma statement follows.
\end{proof}

Next, we prove another utility lemma, which states that the derivative of the score function cannot be too small for an $r$-smoothed distribution.
Phrased differently, the score function of an $r$-smoothed distribution cannot decrease fast.

\begin{lemma}\label{lem:u-lowerderivative}
  $s_r'(x) \geq -1/r^2$ for all $x$, where $s_r$ is the score function of $f_r$, the $r$-smoothed version of distribution $f$.
\end{lemma}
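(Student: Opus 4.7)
The plan is to express $s_r'(x)$ in terms of conditional moments of $Z_r$ given $x$, using the convolution structure, and then recognize a conditional variance (which is automatically nonnegative) as the only quantity in the formula beyond the constant $-1/r^2$.

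First I would write $f_r = f \ast w_r$ where $w_r$ is the pdf of $\mathcal{N}(0,r^2)$, so that $f_r(x) = \int f(y)\, w_r(x-y)\, \d y$. Differentiating under the integral and using the explicit Gaussian identities $w_r'(z) = -\frac{z}{r^2}w_r(z)$ and $w_r''(z) = \bigl(\frac{z^2}{r^4} - \frac{1}{r^2}\bigr)w_r(z)$, I would obtain
\[
\frac{f_r'(x)}{f_r(x)} = \E_{Z_r\mid x}\!\left[\frac{Z_r}{r^2}\right],\qquad \frac{f_r''(x)}{f_r(x)} = \E_{Z_r\mid x}\!\left[\frac{Z_r^2}{r^4} - \frac{1}{r^2}\right],
\]
where the first identity recovers Lemma~\ref{lem:p-z} and the second is the analogous computation at second order. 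These follow by the same conditioning step used there: $p(z\mid x) = f(x-z)w_r(z)/f_r(x)$ lets us rewrite ratios of derivatives of $f_r$ at $x$ as conditional expectations of corresponding ratios of derivatives of $w_r$.

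Next, since $s_r = (\log f_r)'$, I would apply the standard identity
\[
s_r'(x) = \frac{f_r''(x)}{f_r(x)} - \left(\frac{f_r'(x)}{f_r(x)}\right)^{\!2}.
\]
Plugging in the two formulas above yields
\[
s_r'(x) = \frac{1}{r^4}\Bigl(\E_{Z_r\mid x}[Z_r^2] - (\E_{Z_r\mid x}[Z_r])^2\Bigr) - \frac{1}{r^2} = \frac{\mathrm{Var}(Z_r\mid x)}{r^4} - \frac{1}{r^2}.
\]
Since the conditional variance is nonnegative, $s_r'(x) \ge -1/r^2$, as desired.

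There is no real obstacle here; the only things to be careful about are (i) justifying differentiation under the integral, which is routine because $w_r$ and all its derivatives decay super-polynomially, and (ii) handling the case where $f$ is not a density, which, as in the proof of Lemma~\ref{lem:p-z}, can be reduced to the density case by a limit argument since $f_r$ is always smooth regardless of $f$.
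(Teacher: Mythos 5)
Your proof is correct, but it takes a different route from the paper. You compute $s_r'(x)$ exactly, using $s_r' = f_r''/f_r - (f_r'/f_r)^2$ together with the Gaussian identities for $w_r'$ and $w_r''$, and arrive at the identity $s_r'(x) = \Var(Z_r \mid x)/r^4 - 1/r^2$, from which the bound is immediate since the conditional variance is nonnegative. The paper instead never touches second derivatives: it differentiates the likelihood-ratio representation of Lemma~\ref{lem:p-z} once in $\eps$, writes $s_r(x+\eps)$ as a ratio of two conditional expectations, and applies a positive-correlation (Chebyshev association) inequality for the two monotone functions $e^{(2\eps Z_r - \eps^2)/(2r^2)}$ and $(Z_r-\eps)/r^2$ to get the finite-difference inequality $s_r(x+\eps) \ge s_r(x) - \eps/r^2$ for $\eps>0$, and only then lets $\eps \to 0$. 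Your argument is arguably cleaner and strictly more informative, since it identifies the exact slack in the inequality as a conditional variance; the paper's argument has the advantage of producing the finite-difference form directly, which is the form actually invoked in later lemmas (e.g.\ $s_r(x\pm r) \gtrless s_r(x) \mp 1/r$ in Lemma~\ref{lem:score-moments}), though of course that form also follows from your derivative bound by integrating. Your two caveats (differentiation under the integral, and general $f$ via the smoothness of $f_r$ or a limit argument) are handled appropriately and match the level of rigor of the paper's own treatment in Lemma~\ref{lem:p-z}.
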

\begin{proof}
  By taking the derivative of Lemma~\ref{lem:p-z} in $\eps$,
  \[
    \frac{f_r'(x+\eps)}{f_r(x)} = \E_{Z \mid x}\left[e^{\frac{2 \eps Z - \eps^2}{2 r^2}}\frac{Z_r - \eps}{r^2}\right]
  \]
  Hence
  \[
    s_r(x+\eps) = \frac{f_r'(x+\eps)}{f_r(x+\eps)} =     \frac{f_r'(x+\eps)}{f_r(x)}\frac{f_r(x)}{f_r(x+\eps)} = \frac{\E_{Z_r \mid x}\left[e^{\frac{2 \eps Z_r - \eps^2}{2 r^2}}\frac{Z_r - \eps}{r^2}\right]}{\E_{Z_r \mid x}\left[e^{\frac{2 \eps Z_r - \eps^2}{2 r^2}}\right]}.
  \]
  For $\eps > 0$, since $e^{\frac{2 \eps Z_r - \eps^2}{2 r^2}}$ and
  $\frac{Z_r - \eps}{r^2}$ are monotonically increasing in $Z_r$, and the
  former is nonnegative, they are positively correlated:
  \[
    \E_{Z_r \mid x}\left[e^{\frac{2 \eps Z_r - \eps^2}{2 r^2}}\frac{Z_r -
      \eps}{r^2}\right] \geq \E_{Z_r \mid x}\left[e^{\frac{2 \eps Z_r - \eps^2}{2
        r^2}}\right]\E_{Z_r \mid x}\left[\frac{Z_r - \eps}{r^2}\right]
  \]
  Hence
  \[
    s_r(x + \eps) \geq \E_{Z_r \mid x}\left[\frac{Z_r - \eps}{r^2}\right] = s_r(x) - \frac{\eps}{r^2}.
  \]
  or (taking $\eps \to 0$), $s_r'(x) \geq -\frac{1}{r^2}$.
\end{proof}

Lastly, we prove the concentration of empirical score function.
The way we do so is to show (Lemma~\ref{lem:offcenter-score-moments}) that the $k^{\text{th}}$ absolute moment of the score function is upper bounded according to the standard moment bounds for sub-Gamma distributions.
As a corollary (Corollary~\ref{cor:scoreestimate}), we get that the scores have sub-Gamma concentration.

As a utility lemma, we bound the moments of the score function when the score function is aligned with the distribution, instead of being misaligned by some $\eps$ distance.

\begin{lemma}\label{lem:score-moments}
  Let $s_r$ be the score function of an $r$-smoothed distribution $f_r$ with Fisher information $\I_r$.
  Then, for $k \ge 3$,
  \[
    \E_x[\abs{s_r(x)}^k] \leq (1.6/r)^{k-2} k^{k/2}\I_r
  \]
\end{lemma}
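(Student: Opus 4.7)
The plan is to bound $\E_x[|s_r(x)|^k]$ via the tail-integration identity
\[
\E_x[|s_r(x)|^k] = \int_0^\infty k t^{k-1}\Pr(|s_r(X)|>t)\,\d t,
\]
using two complementary tail bounds for $s_r(X)$ and splitting the integral at a threshold $A/r$ with $A = 1.6\sqrt{k}$. In the ``body'' $t \le A/r$, I would apply Chebyshev's inequality $\Pr(|s_r(X)|>t) \le \I_r/t^2$, which crucially exploits $\E[s_r(X)^2] = \I_r$. In the ``tail'' $t > A/r$, I would use a sub-Gaussian bound $\Pr(|s_r(X)|>t) \le 2 e^{-t^2 r^2/2}$.

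The key structural input is sub-Gaussianity of $s_r(X)$ with variance proxy $1/r^2$, which follows from Lemma~\ref{lem:p-z}: since $s_r(X)$ is, up to sign, the conditional expectation $\E_{Z_r\mid X}[Z_r/r^2]$, applying Jensen's inequality to the MGF yields
\[
\E\!\left[e^{\lambda s_r(X)}\right] \le \E\!\left[\E_{Z_r\mid X}\!\left[e^{\lambda Z_r/r^2}\right]\right] = \E\!\left[e^{\lambda Z_r/r^2}\right] = e^{\lambda^2/(2r^2)},
\]
and the usual Chernoff argument delivers the stated tail bound.

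Evaluating the two pieces, the body integrates directly to $\frac{k}{k-2}\,\I_r(A/r)^{k-2}$. The tail, after the substitution $u=tr$, becomes $\frac{2k}{r^k}\int_A^\infty u^{k-1}e^{-u^2/2}\,\d u$, which by repeated integration by parts is bounded (for $A^2$ at least a moderate multiple of $k$) by $O\!\bigl(kA^{k-2}e^{-A^2/2}/r^k\bigr)$. With $A=1.6\sqrt{k}$, we have $e^{-A^2/2}=e^{-1.28k}$, which decays fast enough in $k$ to absorb the tail into the body. Applying $k^{(k-2)/2} \le k^{k/2}$ to the body contribution then yields the claimed bound $(1.6/r)^{k-2}k^{k/2}\,\I_r$.

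The main obstacle I expect is bookkeeping constants tightly enough to land exactly at $1.6$, and handling the extreme regime where $r^2\I_r$ is so small that Chebyshev only dominates over a negligible range. In that regime I would fall back on the complementary Jensen bound obtained directly from Lemma~\ref{lem:p-z}, namely $|s_r(X)|^k \le \E_{Z_r\mid X}[|Z_r|^k]/r^{2k}$, which with $\E[|Z|^k]\le k^{k/2}$ yields $\E_x[|s_r(x)|^k]\le k^{k/2}/r^k$; this is in turn dominated by $(1.6/r)^{k-2}k^{k/2}\,\I_r$ whenever $r^2\I_r \ge 1.6^{2-k}$, filling in precisely the residual regime left uncovered by tail-splitting.
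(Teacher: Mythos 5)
Your two tail ingredients cannot be combined into a bound that is proportional to $\I_r$, and that is a genuine gap, not a bookkeeping issue. The sub-Gaussian bound $\Pr(|s_r(X)|>t)\le 2e^{-t^2r^2/2}$ is correct (your Jensen/MGF argument via Lemma~\ref{lem:p-z} is fine), but it carries no information about $\I_r$: the tail piece of your integral is $\frac{2k}{r^k}\int_{1.6\sqrt k}^\infty u^{k-1}e^{-u^2/2}\,\d u$, a fixed quantity of order $k(1.6\sqrt k)^{k-2}e^{-1.28k}/r^k$ that does not shrink with $\I_r$, while the target $(1.6/r)^{k-2}k^{k/2}\I_r$ does. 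Since $r^2\I_r$ can be arbitrarily small (take $f_r=\Normal(0,\sigma^2+r^2)$ with $\sigma\gg r$, so $\I_r=1/(\sigma^2+r^2)$), the tail term exceeds the claimed bound by a factor of order $e^{-1.28k}\sigma^2/r^2\to\infty$; also, at $k=3$ there is no slack at all, since the body bound $\frac{k}{k-2}\I_r(A/r)^{k-2}$ with $A=1.6\sqrt k$ already equals the target. Your fallback patches the wrong end: $k^{k/2}/r^k\le(1.6/r)^{k-2}k^{k/2}\I_r$ holds exactly when $r^2\I_r\ge 1.6^{2-k}$, i.e.\ when $r^2\I_r$ is \emph{large}, which is where the tail-splitting is already fine; it gives nothing in the small-$r^2\I_r$ regime, which is precisely the one left uncovered. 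Trying to repair this by taking the threshold $A\sim\sqrt{k+2\log\frac{1}{r^2\I_r}}$ (so the Gaussian tail becomes $\lesssim r^2\I_r$) inflates the body to $\I_r\bigl(C\sqrt{k+\log\frac{1}{r^2\I_r}}/r\bigr)^{k-2}$, a strictly weaker statement than the lemma (the sub-Gamma scale would pick up a $\sqrt{\log\frac{1}{r^2\I_r}}$ factor that the paper's downstream use in Corollary~\ref{cor:scoreestimate} and Lemma~\ref{lem:localMLE} does not tolerate as stated).

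The paper's proof avoids any $\I_r$-free tail estimate. It shows a pointwise domination: whenever $|s_r(x)|\ge(2+1.2\sqrt k)/r$, one has $f_r(x)|s_r(x)|^k\le\frac14\bigl(f_r(x-r)|s_r(x-r)|^k+f_r(x+r)|s_r(x+r)|^k\bigr)$, using $f_r(x+\eps)\ge f_r(x)e^{\eps s_r(x)-\eps^2/(2r^2)}$ (Lemma~\ref{lem:p-z} plus Jensen) together with $s_r'\ge-1/r^2$ (Lemma~\ref{lem:u-lowerderivative}). Integrating this self-bounding inequality shows the whole $k$-th moment is at most twice the contribution from the region $\{|s_r|<\alpha/r\}$, which is at most $(\alpha/r)^{k-2}\E[s_r^2]=(\alpha/r)^{k-2}\I_r$ — so the factor $\I_r$ survives in every regime, which is exactly what your decomposition loses. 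To make your route work you would need a tail bound of the shape $\Pr(|s_r(X)|>t)\lesssim r^2\I_r\,e^{-ct^2r^2}$ for $t\gtrsim 1/r$, but such a bound is essentially equivalent to the lemma itself (it follows from it by Markov), so it cannot serve as an independent input.
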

\begin{proof}

  For any $x, \eps$, by Lemma~\ref{lem:p-z} and Jensen's inequality,
  \[
    f_r(x + \eps) \geq f_r(x) e^{\eps s_r(x) - \frac{\eps^2}{2r^2}}.
  \]
  Setting $\eps = \pm r$ with sign matching $s_r(x)$, we have that
  \[
    f_r(x + r \text{sign}(s_r(x))) \geq  f_r(x) e^{r \abs{s_r(x)}} / \sqrt{e}.
  \]
  We also have, from Lemma~\ref{lem:u-lowerderivative}, that
  \[
    s_r(x-r) \leq s_r(x) + 1/r
  \]
   and
  \[
    s_r(x+r) \geq s_r(x) - 1/r.
  \]
  In other words,
  \[
    \abs{s_r(x + r \sign(s_r(x)) )} \geq \abs{s_r(x)} - 1/r.
  \]
  Therefore, for any $k \geq 2$, and $\abs{s_r(x)} > \alpha/r$ for $\alpha := 2 + 1.2 \sqrt{k}$,
  \begin{align*}
    f_r(x + r\text{sign}(s_r(x)))\abs{s_r(x+r\text{sign}(s_r(x)))}^k
    &\geq \frac{1}{\sqrt{e}}f_r(x) e^{r \abs{s_r(x)}} (\abs{s_r(x)}-1/r)^k\\
    &= f_r(x) \abs{s_r(x)}^k \cdot \left(\frac{1}{\sqrt{e}} e^{r \abs{s_r(x)}} (1 - \frac{1}{r \abs{s_r(x)}})^k\right)\\
    &\geq f_r(x) \abs{s_r(x)}^k \cdot \left(\frac{1}{\sqrt{e}} e^{\alpha - 1.4 \frac{k}{\alpha}}\right)\\
    &\geq f_r(x) \abs{s_r(x)}^k \cdot 4.
  \end{align*}
  Therefore
  \begin{align}
   f_r(x) \abs{s_r(x)}^k &\leq \frac{1}{4} \left(f_r(x - r)\abs{s_r(x-r)}^k + f_r(x + r)\abs{s_r(x+r)}^k\right)
  \end{align}
  whenever $k \geq 2$ and $\abs{s_r(x)} \geq \alpha/r$.  Integrating
  this,
  \begin{align*}
    \E[s_r^k(x)] = \int_{-\infty}^\infty f_r(x) \abs{s_r(x)}^k \,\d x &= 2 \int_{-\infty}^\infty f_r(x) \abs{s_r(x)}^k - \frac{1}{4} f_r(x - r)\abs{s_r(x-r)}^k - \frac{1}{4} f_r(x + r)\abs{s_r(x+r)}^k \,\d x\\
                                               &\leq 2 \int_{-\infty}^\infty f_r(x) \abs{s_r(x)}^k 1_{\abs{s_r(x)} < \alpha/r} \,\d x\\
                                               &\leq 2 \int_{-\infty}^\infty f_r(x) \abs{s_r(x)}^2 (\alpha/r)^{k-2} 1_{\abs{s_r(x)} < \alpha/r} \,\d x\\
                                               &\leq 2  (\alpha/r)^{k-2} \E_x[s_r^2(x)] = 2 (\alpha/r)^{k-2} \I_r
  \end{align*}
  Finally, we observe for any $k \geq 2$ that
  \[
    2 (1.2\sqrt{k} + 2)^{k-2} \leq k^{k/2} \cdot 1.6^{k-2}
  \]
  giving the lemma.

\end{proof}

The proof of Lemma~\ref{lem:offcenter-score-moments} has the same logical structure as the proof of Lemma~\ref{lem:score-moments}, but has further subtleties.
The following two lemmas generalize the first step in the proof of Lemma~\ref{lem:score-moments}.

\begin{lemma}\label{lem:eps_pos_s_pos_helper}
Let $s_r$ be the score function of an $r$-smoothed distribution $f_r$ with Fisher information $\I_r$.
    For any $x$, $k \ge 3$ and $0 \le \eps \le r/2$, if $s_r(x+\eps) \ge \max(2 \sqrt{k} + 2, 9.5)/r$, then
    \begin{align*}
        f_r(x) |s_r(x+\eps)|^k \le \frac{1}{5} \max\left(f_r(x - \eps) \abs{s_r(x - \eps) }^k, f_r(x + \eps+r)\abs{s_r(x +\eps+ r)}^k \right)
    \end{align*}
    
\end{lemma}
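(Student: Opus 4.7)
The plan is to adapt the argument from Lemma~\ref{lem:score-moments}---where the density at $x \pm r$ was used to dominate $f_r(x)|s_r(x)|^k$---to the present setting, in which the density is evaluated at $x$ while the score is evaluated at the shifted point $x+\eps$. Since the hypothesis $s_r(x+\eps) \ge \sigma := \max(2\sqrt{k}+2,\,9.5)/r$ forces the score to be positive and large, I expect the right-hand arm of the max, $f_r(x+\eps+r)|s_r(x+\eps+r)|^k$, to dominate $f_r(x)|s_r(x+\eps)|^k$ by itself, so that the proof reduces to showing
\[
  \frac{f_r(x)\,|s_r(x+\eps)|^k}{f_r(x+\eps+r)\,|s_r(x+\eps+r)|^k} \le \frac{1}{5}.
\]

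First I would apply Lemma~\ref{lem:u-lowerderivative} ($s_r' \ge -1/r^2$) twice, to transfer the lower bound on $s_r(x+\eps)$ to $s_r(x) \ge \sigma - \eps/r^2$ and $s_r(x+\eps+r) \ge \sigma - 1/r$; under $\sigma \ge 9.5/r$ and $\eps \le r/2$ both remain positive and close to $\sigma$. Next, I would apply Lemma~\ref{lem:p-z} with shift $\eps+r$ followed by Jensen's inequality, to obtain
\[
  \frac{f_r(x+\eps+r)}{f_r(x)} \ge e^{(\eps+r)\,s_r(x) - (\eps+r)^2/(2r^2)} \ge e^{(\eps+r)(\sigma - \eps/r^2) - (\eps+r)^2/(2r^2)}.
\]
The score-value ratio is then handled by the elementary estimate $\bigl(\sigma/(\sigma - 1/r)\bigr)^k \le e^{k/(r\sigma - 1)} \le e^{\sqrt{k}/2}$, using $r\sigma \ge 2\sqrt{k}+2$. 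Combining, the logarithm of the target ratio is at most $-(\eps+r)(\sigma - \eps/r^2) + (\eps+r)^2/(2r^2) + \sqrt{k}/2$, which after using $\eps \le r/2$ and $r\sigma \ge 2\sqrt{k}+2$ is bounded above by approximately $-3\sqrt{k}/2 - 1/8$, and hence by $\log(1/5)$ for all $k \ge 3$.

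The main subtlety I anticipate is bookkeeping rather than a conceptual barrier: Lemma~\ref{lem:p-z} naturally relates $f_r(x)$ to $f_r(x+\text{shift})$ through the score \emph{at $x$}, but our hypothesis only controls the score at $x+\eps$, so the derivative bound must be used to pass from one to the other, and this forces $\eps$ to be small enough (relative to $r\sigma$) that $s_r(x)$ stays comfortably positive---which is exactly what the two explicit thresholds $2\sqrt{k}+2$ and $9.5$ buy us. The two-sided max in the statement presumably leaves room for a symmetric ``left-shift'' argument in analogous helper lemmas for negative score or the opposite sign of $\eps$, but for the regime of this lemma the right-shift arm alone suffices.
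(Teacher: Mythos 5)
There is a genuine gap, and it sits at the very first step of your plan. Lemma~\ref{lem:u-lowerderivative} states $s_r' \ge -1/r^2$, i.e.\ the score cannot \emph{decrease} quickly as you move right; it places no upper bound on $s_r'$. Consequently, moving right by $r$ it does give $s_r(x+\eps+r) \ge s_r(x+\eps) - 1/r$, but moving \emph{left} from $x+\eps$ to $x$ it only yields $s_r(x) \le s_r(x+\eps) + \eps/r^2$ --- an upper bound, not the lower bound $s_r(x) \ge \sigma - \eps/r^2$ that your density estimate needs. In fact $s_r(x)$ can be arbitrarily negative while $s_r(x+\eps)$ is large positive: take $f$ a two-point mixture so that $f_r$ is a mixture of two Gaussians of width $r$ whose ``crossover'' lies between $x$ and $x+\eps$; then $s_r(x) \approx -(x-a)/r^2$ is hugely negative, $s_r(x+\eps) \approx (b-x-\eps)/r^2$ can be set to about $10/r$, and $\alpha := f_r(x)/f_r(x+\eps)$ can be made arbitrarily large. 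In that regime $f_r(x+\eps+r) \approx e^{r s_r(x+\eps)} f_r(x+\eps)$ is far too small to dominate $f_r(x)|s_r(x+\eps)|^k = \alpha\, f_r(x+\eps)|s_r(x+\eps)|^k$, so your claim that the right-hand arm of the max always suffices is false, not just unproven.

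This is exactly why the statement has a two-sided max and why the paper's proof is a case analysis on $\alpha = f_r(x)/f_r(x+\eps)$. When $\log\alpha \le \tfrac34 r s_r(x+\eps) - 2$, the paper argues essentially as you do (Lemma~\ref{lem:p-z} plus Jensen to lower bound $f_r(x+\eps+r)/f_r(x+\eps)$ by $e^{r s_r(x+\eps)-1/2}$, and Lemma~\ref{lem:u-lowerderivative} for the score at $x+\eps+r$), and the right arm wins; note it compares $f_r(x+\eps+r)$ to $f_r(x+\eps)$, never needing a lower bound on $s_r(x)$. When $\log\alpha > \tfrac34 r s_r(x+\eps) - 2$, the paper switches to the \emph{left} arm: a Jensen/concavity argument applied to the conditional expectation in Lemma~\ref{lem:p-z} shows $s_r(x) \le -\frac{\log\alpha}{\eps} + \frac{\eps}{2r^2}$, hence $s_r(x-\eps)$ is very negative with $|s_r(x-\eps)| \ge s_r(x+\eps)$, while $f_r(x-\eps)/f_r(x+\eps) \ge \alpha^2 e^{-\eps^2/r^2}$, so $f_r(x-\eps)|s_r(x-\eps)|^k$ dominates. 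So the left arm is essential to this lemma itself, not merely ``room'' for other symmetric lemmas; to repair your argument you would need to add precisely such a second case.
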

\begin{proof}
Let $\alpha := \frac{f_r(x)}{f_r(x+\eps)}$. By Lemma~\ref{lem:p-z}, we have
\begin{equation}\label{eq:alphadef}
\alpha = \E_{Z_r | x+\eps}\left[e^{\frac{-2 \eps Z_r - \eps^2}{2 r^2}}\right]
\end{equation}
We will consider two cases.
\paragraph{When $\log \alpha \le \frac{3}{4}r s_r(x+\eps) - 2$.}
First, by Lemma \ref{lem:p-z} and Jensen's inequality, we have
$$\frac{f_r(x + \eps+ r)}{f_r(x+\eps)} \ge e^{r s_r(x+\eps) - 1/2}$$
We also have, by Lemma~\ref{lem:u-lowerderivative},
$$s_r(x + \eps+ r) \ge s_r(x+\eps) - 1/r$$

So,
\begin{align*}
f_r(x + \eps+ r) |s_r(x +\eps+ r)|^k &\ge f_r(x+\eps) |s_r(x+\eps)|^k  e^{r s_r(x) - 1/2} \left(1 - \frac{1}{r s_r(x+\eps)} \right)^k\\
&\ge f_r(x+\eps) |s_r(x+\eps)|^k e^{r s_r(x+\eps) - \frac{k}{r s_r(x+\eps) - 1} - 1/2}
\end{align*}

Since $s_r(x+\eps) \ge (2 \sqrt{k} + 2)/r$,
$$f_r(x + \eps+ r) |s_r(x +\eps+ r)|^k \ge f_r(x+\eps) |s_r(x+\eps)|^k e^{\frac{3}{4}r s_r(x+\eps)}$$

So, since $$\alpha = \frac{f_r(x)}{f_r(x+\eps)} \le e^{\frac{3}{4} rs_r(x+\eps) - 2} $$ we have $$f(x) |s_r(x+\eps)|^k = \alpha f_r(x+\eps) |s_r(x+\eps)|^k \le \frac{1}{5} f(x+\eps+r) |s_r(x +\eps+ r)|^k$$
\paragraph{When $\log \alpha > \frac{3}{4} r s_r(x+\eps) - 2$}
Evaluating \eqref{eq:alphadef} at $x-\eps$ gives
\[ \frac{f_r(x-\eps)}{f_r(x)} = \E_{Z_r \, | \, x}\left[e^{\frac{-2\eps Z_r-\eps^2}{2r^2}}\right]\]
Taking derivative with respect to $\eps$, we have $$\frac{f_r'(x - \eps)}{f_r(x)} = \E_{Z_r | x}\left[\frac{(Z_r + x)}{r^2} e^{\frac{-2\eps Z_r - \eps^2}{2r^2}}\right]$$
and so by evaluating at $x+\eps$ (to ``shift back")
$$\frac{f_r'(x)}{f_r(x+\eps)} = \E_{Z_r | x+\eps}\left[\frac{(Z_r + x+\eps)}{r^2} e^{\frac{-2\eps Z_r - \eps^2}{2r^2}}\right]$$
Define $y = e^{\frac{-2 \eps Z_r - 2\eps^2}{2 r^2}}$, so that $\E_{Z_r\mid x+\eps}[y] = \alpha e^{\frac{\eps^2}{2 r^2}}$, and
$$\frac{(Z_r + \eps)}{r^2} e^{\frac{-2\eps Z_r - \eps^2}{2r^2}} = -\frac{e^{\frac{\eps^2}{2 r^2}}}{\eps} y \log y$$
is concave, so by Jensen's inequality
$$\frac{f'(x)}{f(x+\eps)} \le -\frac{e^{\eps^2/(2 r^2)}}{\eps} (e^{-\frac{\eps^2}{2 r^2}} \alpha) \log (e^{-\frac{\eps^2}{2 r^2}} \alpha) = - \frac{\alpha \log \alpha}{\eps} + \frac{\alpha \eps}{2 r^2}$$
So,
$$s_r(x) = \frac{f_r'(x)}{f_r(x)} \le - \frac{\log \alpha}{\eps} + \frac{\eps}{2 r^2}$$
Finally, we move to consider the point $x - \eps$.
By Lemma~\ref{lem:u-lowerderivative}, we have
$$s_r(x - \eps) \le s_r(x) + \eps/r^2 \le -\frac{\log \alpha}{\eps} + \frac{3 \eps}{2r^2}$$

By Lemma~\ref{lem:p-z},
$$\frac{f(x - \eps)}{f(x+\eps)} = \E_{Z_r \mid x+\eps}\left[e^{-\frac{4 \eps Z_r - 4 \eps^2}{2 r^2}} \right] = \E_{Z_r\mid x+\eps}[y^2] \ge \E_{Z_r \mid x+\eps}[y]^2 = \alpha^2 e^{-\frac{\eps^2}{r^2}}$$
Since $\log \alpha \ge \frac{3}{4} s_r(x+\eps) - 2$,
$$-s_r(x - \eps) \ge \frac{\frac{3}{4}r s_r(x+\eps) - 2}{\eps} - \frac{3 \eps}{2 r^2} \ge \frac{3}{2} s_r(x+\eps) - \frac{4.75}{r} \ge s_r(x)$$
where the second inequality comes from the fact that $\frac{3}{4}rs_r(x+\eps)-2 > 0$ and so the function is decreasing in $\eps$, with minimum evaluated at $\eps = r/2$.

Thus, we have
$$f_r(x - \eps) |s_r(x - \eps)|^k \ge \alpha e^{-\eps^2/r^2} f_r(x) |s_r(x+\eps)|^k $$
Since our assumptions give $\alpha e^{-\eps^2/r^2} \ge e^{5.125} e^{-1/4} \ge 5$, we get the result.
\end{proof}

\begin{lemma}\label{lem:eps_neg_s_pos_helper}
Let $s_r$ be the score function of an $r$-smoothed distribution $f_r$ with Fisher information $\I_r$.

For any $x$, $k \ge 3$ and $-r/2 \le \eps \le 0$, if $s_r(x+\eps) \ge \alpha/r$ for $\alpha = 2 + 1.2 \sqrt{k}$, then we have
\[f_r(x) |s_r(x+\eps)|^k \le \frac{1}{4}\left(f_r(x -  r ) |s_r(x+\eps - r)|^k + f_r(x + r) |s_r(x + \eps + r)|^k\right)\]

As an immediate corollary, the statement is true also when $\eps \in [0, r/2)$ and $s_r(x) \le -\alpha/r$.
\end{lemma}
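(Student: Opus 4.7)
The plan is to mirror the pointwise comparison driving Lemma~\ref{lem:score-moments}, with the extra preliminary step of transferring the score lower bound from the point $x+\eps$ (where the hypothesis lives) back to the point $x$ (where density growth must be controlled). Since $s_r(x+\eps) \ge \alpha/r > 0$, only the $+r$ shift substantially increases $f_r$, so it suffices to prove the one-sided strengthening
\[
4\, f_r(x)|s_r(x+\eps)|^k \;\le\; f_r(x+r)|s_r(x+\eps+r)|^k,
\]
after which the full lemma follows by tacking on the nonnegative $(x-r)$-term on the right.

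The transfer step is cheap: $\eps \in [-r/2,0]$ means $x$ and $x+\eps$ differ by $|\eps| \le r/2$, and $s_r' \ge -1/r^2$ (Lemma~\ref{lem:u-lowerderivative}) gives $s_r(x) \ge s_r(x+\eps) - |\eps|/r^2 \ge (\alpha - \tfrac12)/r > 0$. With $s_r(x) > 0$ in hand, Lemma~\ref{lem:p-z} and Jensen's inequality give the density bound $f_r(x+r)/f_r(x) \ge e^{r s_r(x) - 1/2} \ge e^{\alpha-1}$, exactly as in Lemma~\ref{lem:score-moments}. A second application of Lemma~\ref{lem:u-lowerderivative} then controls the score at the shifted point: $s_r(x+\eps+r) \ge s_r(x+\eps) - 1/r \ge s_r(x+\eps)(1-1/\alpha) > 0$, so $|s_r(x+\eps+r)|^k \ge |s_r(x+\eps)|^k (1-1/\alpha)^k$.

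Multiplying the two estimates, the one-sided ratio is at least $e^{\alpha-1}(1-1/\alpha)^k$; using $\log(1-1/\alpha) \ge -1.4/\alpha$ on the range $1/\alpha \le 1/4$, this is at least $e^{\alpha - 1 - 1.4k/\alpha}$, and the required $\ge 4$ reduces to $\alpha - 1 - 1.4k/\alpha \ge \log 4 \approx 1.39$. At $\alpha = 2 + 1.2\sqrt{k}$ this is a direct numerical check (tightest at $k=3$, where the left side is $\approx 2.05$); for larger $k$ the left side is monotonically increasing in $k$, so the inequality persists. The ``immediate corollary'' for $\eps \in [0, r/2)$ with $s_r(x) \le -\alpha/r$ is obtained by applying the lemma to the reflected distribution $u \mapsto f_r(-u)$, whose score is $-s_r(-u)$ and hence flips sign.

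The main obstacle is a modest one: keeping track of the two $1/2$-type constant losses---one from the Jensen step in the density bound and one from the score-transfer step via $s_r' \ge -1/r^2$---and confirming that both are absorbed by the slack built into $\alpha = 2 + 1.2\sqrt{k}$. There is no genuinely new idea needed beyond the tools already developed in Lemmas~\ref{lem:p-z},~\ref{lem:u-lowerderivative}, and~\ref{lem:score-moments}; the novelty is only in the bookkeeping required to handle the off-center evaluation point.
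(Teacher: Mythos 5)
Your proof is correct and essentially the same as the paper's: both establish the one-sided bound $4\, f_r(x)|s_r(x+\eps)|^k \le f_r(x+r)|s_r(x+\eps+r)|^k$ (the $x-r$ term being gratis) via Lemma~\ref{lem:p-z} with Jensen for the density ratio and Lemma~\ref{lem:u-lowerderivative} both to transfer the score hypothesis from $x+\eps$ to $x$ and to control $s_r(x+\eps+r)$, differing only in constant bookkeeping (your $e^{\alpha-1}(1-1/\alpha)^k \ge 4$ versus the paper's $e^{-3/2}e^{\alpha-1.4k/\alpha} \ge 4$, both of which check out at $\alpha = 2+1.2\sqrt{k}$, $k\ge 3$). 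One small caveat: your reflection argument actually yields the corollary with hypothesis $s_r(x+\eps) \le -\alpha/r$ (at the score-evaluation point) rather than $s_r(x) \le -\alpha/r$ as literally written, but this is precisely the version invoked in the proof of Lemma~\ref{lem:offcenter-score-moments}, so it matches the intended statement.
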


\begin{proof}
  For any $x, \kappa$, by Lemma~\ref{lem:p-z} and Jensen's inequality,
  \[
    f_r(x + \kappa) \geq f_r(x) e^{\kappa s_r(x) - \frac{\kappa^2}{2r^2}}.
  \]
  So, setting $\kappa = r$, we have
  \[
    f_r(x + r ) \ge f_r(x) e^{r  s_r(x)}/\sqrt{e} 
  \]
  By Lemma~\ref{lem:u-lowerderivative}, we have that 
  $$s_r(x + \eps + r) \ge s_r(x + \eps) - 1/r$$
  Since our right hand side is positive by assumption, this is equivalently stated as
  $$|s_r(x + \eps + r ))| \ge |s_r(x + \eps)| - 1/r$$
    
When $\eps < 0$, we have, by Lemma~\ref{lem:u-lowerderivative}, and since $|\eps| \le r$, $s_r(x) \ge s_r(x + \eps) - 1/r $. So,
\begin{align*}
    f_r(x + r ) |s_r(x + \eps + r )|^k 
    &\geq \frac{1}{\sqrt{e}} f_r(x) e^{r s_r(x)} (\abs{s_r(x+\eps)}-1/r)^k\\
    &\ge \frac{1}{\sqrt{e}} f_r(x) e^{r  (s_r(x + \eps) - 1/r)} (\abs{s_r(x+\eps)}-1/r)^k\\
    &\ge f_r(x) |s_r(x + \eps)|^k \left(\frac{1}{\sqrt{e}} e^{r|s_r(x + \eps)| - 1} \left(1 - \frac{1}{r |s_r(x + \eps)|} \right)^k \right)\\
    &\ge f_r(x) |s_r(x + \eps)|^k \cdot \left(e^{-3/2} e^{\alpha - 1.4k/\alpha} \right)\\
    &\ge f_r(x) |s_r(x + \eps)|^k \cdot 4 \quad \text{since $k \ge 3$}\\
  \end{align*}
\end{proof}

We are now ready to prove Lemma~\ref{lem:offcenter-score-moments}, which states that the distribution of the score function $s_r(x+\eps)$ where $x \sim f_r$ is a sub-Gamma distribution.
Corollary~\ref{cor:scoreestimate} then states that the average of many score function samples is well-concentrated, following sub-Gamma concentration.

\begin{lemma}\label{lem:offcenter-score-moments}
Let $s_r$ be the score function of an $r$-smoothed distribution $f_r$ with Fisher information $\I_r$.
  Then, for $k \ge 3$ and $|\eps| \le r/2$,
  \[
    \E_x[\abs{s_r(x+\eps)}^k] \leq \frac{k!}{2} (15/r)^{k-2} \max(\E_x[s^2_r(x + \eps)], \I_r)
  \]
  Equivalently, $s_r(x+\eps)$ is a sub-Gamma random variable.
  \[
    s_r(x+\eps) \in \Gamma(\max(\E_x[s_r^2(x+\eps)],\I_r), 15/r).
  \]
\end{lemma}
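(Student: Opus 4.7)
The plan is to follow the same case-splitting template as in the proof of Lemma~\ref{lem:score-moments}, but now using the two off-center helper lemmas (Lemmas~\ref{lem:eps_pos_s_pos_helper} and \ref{lem:eps_neg_s_pos_helper}) to absorb the asymmetry between the point $x$ where $f_r$ is evaluated and the point $x+\eps$ where $s_r$ is evaluated. Concretely, I would pick a threshold $\alpha$ of order $\sqrt{k}$ (matching the assumptions of the helpers, say $\alpha = \max(2\sqrt{k}+2, 9.5)$), partition the real line into the ``small score'' set $B = \{x : |s_r(x+\eps)| \le \alpha/r\}$ and its complement $A$, and treat the two pieces separately.

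On the small-score set $B$, the bound is immediate: $|s_r(x+\eps)|^k \le (\alpha/r)^{k-2} s_r^2(x+\eps)$, so integrating gives a contribution of at most $(\alpha/r)^{k-2}\, \E_x[s_r^2(x+\eps)]$. On the large-score set $A$, I would case-split on the signs of $\eps$ and of $s_r(x+\eps)$. For $\eps \ge 0$ with $s_r(x+\eps) \ge \alpha/r$, Lemma~\ref{lem:eps_pos_s_pos_helper} applies directly; the resulting shifted points $x-\eps$ and $x+\eps+r$ both integrate, after change of variable, to $\E_x[|s_r(x)|^k]$, which is already controlled by Lemma~\ref{lem:score-moments} and hence by $k^{k/2}(1.6/r)^{k-2}\I_r$. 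For $\eps \le 0$ with $s_r(x+\eps) \ge \alpha/r$, Lemma~\ref{lem:eps_neg_s_pos_helper} applies; the shifted points $x\pm r$ integrate to $\E_x[|s_r(x+\eps)|^k]$, the same quantity we are bounding. The remaining two sign combinations follow from the corollary of Lemma~\ref{lem:eps_neg_s_pos_helper} and from its obvious $s_r \leftrightarrow -s_r$ symmetry.

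Collecting the contributions, the integral over $A$ is bounded by $\frac{1}{5}\cdot 2\,\E_x[|s_r(x)|^k]$ from the type-(a) shifts plus $\frac{1}{4}\cdot 2\,\E_x[|s_r(x+\eps)|^k]$ from the type-(b) shifts. The type-(b) term has coefficient strictly less than $1$, so it can be absorbed into the LHS, leaving an inequality
\[
\E_x[|s_r(x+\eps)|^k] \;\le\; C_1\,k^{k/2}(1/r)^{k-2}\,\I_r \;+\; C_2\,(\alpha/r)^{k-2}\,\E_x[s_r^2(x+\eps)]
\]
for universal constants $C_1, C_2$. Since $\alpha = \Theta(\sqrt{k})$, both prefactors are of the form $O(k^{k/2}/r^{k-2})$; a Stirling-style comparison shows $k^{k/2}\cdot(\text{const})^{k-2} \le \tfrac{k!}{2}(15/r)^{k-2}/r^{k-2}$ for all $k \ge 3$ once the $15$ is chosen large enough, which rewrites the bound in the stated sub-Gamma form with $\nu = \max(\E_x[s_r^2(x+\eps)],\I_r)$ and $c = 15/r$.

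The main obstacle I expect is purely bookkeeping: verifying that in every one of the four sign combinations the helper lemma actually applies with threshold $\alpha = \Theta(\sqrt{k})$ (in particular checking that the corollary of Lemma~\ref{lem:eps_neg_s_pos_helper}, which is stated in terms of $s_r(x)$ rather than $s_r(x+\eps)$, indeed covers the $\eps>0$, $s_r(x+\eps) \le -\alpha/r$ case up to adjusting $\alpha$ by $O(1/r)$ via Lemma~\ref{lem:u-lowerderivative}), and that the self-referential coefficient from the type-(b) shifts stays strictly below $1$ so the absorption is legal. Once those are nailed down, the constant chase to pass from $k^{k/2}\cdot 1.6^{k-2}$ to $\tfrac{k!}{2}\cdot 15^{k-2}$ is straightforward for all $k\ge 3$.
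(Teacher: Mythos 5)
Your proposal is correct and follows essentially the same route as the paper's proof: the same split into a small-score region (bounded trivially by $(\alpha/r)^{k-2}\E_x[s_r^2(x+\eps)]$) and large-score regions handled by Lemmas~\ref{lem:eps_pos_s_pos_helper} and~\ref{lem:eps_neg_s_pos_helper}, with the first helper's terms integrating to the centered moment controlled by Lemma~\ref{lem:score-moments} and the second helper's self-referential terms removed exactly as you describe (your ``absorb the coefficient-$\tfrac12$ term into the LHS'' is the paper's $2\int\left[\cdot - \tfrac14\cdot - \tfrac14\cdot\right]$ identity in disguise), followed by the same constant chase to $\tfrac{k!}{2}(15/r)^{k-2}$. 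The bookkeeping issues you flag (threshold $\max(2\sqrt{k}+2,9.5)$ versus $2+1.2\sqrt{k}$, and the sign/reflection handling of the corollary) are resolved in the paper exactly as you anticipate, via the WLOG reduction to $\eps\ge 0$ and reflection symmetry.
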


\begin{proof}
Without loss of generality we only show the $\eps \ge 0$ case.

Using Lemma~\ref{lem:eps_pos_s_pos_helper} and Lemma~\ref{lem:score-moments}, we have
\begin{align*}
    &\int_{-\infty}^\infty f_r(x - \eps) |s_r(x)|^k \1_{s_r(x) > \max(2 \sqrt{k} + 2, 9.5)/r} \,\d x\\
    \le\;& \frac{1}{5}\int_{-\infty}^\infty f_r(x - 2 \eps)|s_r(x - 2\eps)|^k + f_r(x+r)|s_r(x+r)|^k\,\d x \\
    =\;& \frac{2}{5} \E[|s_r(x)|^k]\\
    \le\;& \frac{2}{5}(1.6/r)^{k-2}k^{k/2}\I_r
\end{align*}
We can start bounding the $k^{\text{th}}$ moment quantity in the lemma:
\begin{align*}
     &\;\;\E[|s_r(x+\eps)|^k]\\
     =\;& \int_{-\infty}^\infty f_r(x-\eps) \abs{s_r(x)}^k \,\d x\\
     =\;& 2 \int_{-\infty}^\infty f_r(x-\eps) \abs{s_r(x)}^k - \frac{1}{4} f_r(x -\eps- r)\abs{s_r(x-r)}^k - \frac{1}{4} f_r(x -\eps + r)\abs{s_r(x+r)}^k \,\d x\\
     \le\;& 2 \int_{-\infty}^\infty f_r(x-\eps) |s_r(x)|^k \1_{s_r(x) \ge -\max(2\sqrt{k} + 2, 9.5)/r}\,\d x
\end{align*}
where the last inequality follows from (a slight weakening of) Lemma~\ref{lem:eps_neg_s_pos_helper}. Now, by the previous claim, we get that
\begin{align*}
&\;\;\E[|s_r(x+\eps)|^k]\\
\le\;& 2 \int_{-\infty}^\infty f_r(x - \eps) |s_r(x)|^k \1_{|s_r(x)| \le \max(2 \sqrt{k} + 2, 9.5)/r} \,\d x + \frac{4}{5} (1.6/r)^{k-2}k^{k/2}\I_r\\
\le\;& 2 \int_{-\infty}^\infty f_r(x-\eps) |s_r(x)|^2 (\max(2 \sqrt{k}+2, 9.5)/r)^{k-2} \1_{|s_r| \le \max(2 \sqrt{k} + 2, 9.5)/r} \,\d x+ \frac{4}{5} (1.6/r)^{k-2}k^{k/2}\I_r\\
\le\;& 2 (\max(2 \sqrt{k} + 2, 9.5)/r)^{k-2} \E_x[|s_r(x + \eps)|^2] + \frac{4}{5} (1.6/r)^{k-2}k^{k/2}\I_r\\
\le\;& 2 k^{k/2} (2.5/r)^{k-2} \E_x[|s_r(x + \eps)|^2] + \frac{4}{5} (1.6/r)^{k-2}k^{k/2}\I_r\\
\le\;& 3 k^{k/2} (2.5/r)^{k-2} \max(\E_x[|s_r(x + \eps)|^2], \I_r)\\
\le\;& \frac{k!}{2} (15/r)^{k-2} \max(\E_x[|s_r(x + \eps)|^2], \I_r)
\end{align*}

\end{proof}

\CorScoreEstimate*
\begin{proof}
Since $\hat s(\lam + \eps) = \frac{1}{n} \sum_{i=1}^n s_r(y_i - \lam - \eps) = \frac{1}{n} \sum_{i=1}^n s_r(x_i - \eps)$, we know that by Lemma~\ref{lem:offcenter-score-moments} and the standard algebra of sub-Gamma distributions that $\hat{s}(\lambda+\eps) \in \Gamma(\frac{1}{n}\max(\E_x[s_r^2(x+\eps)],\I_r), 15/r)$.
The corollary then follows from the standard Bernstein inequality for sub-Gamma distributions~\cite{boucheron}.
\end{proof}

\section{Proofs omitted in Section~\ref{sec:localMLE}}
\label{app:localMLE}

We first give the proof of Theorem~\ref{thm:simpleLocalMLE}, assuming Theorem~\ref{thm:localMLE}.

\begin{proof}[Proof of Theorem~\ref{thm:simpleLocalMLE}]
  It suffices to show that conditions 2) and 3) in the corollary statement implies that each of the following terms from Theorem~\ref{thm:localMLE} is $1+O(1/\gamma)$:
  \begin{itemize}
      \item $1+1/\log\frac{1}{\delta}$: Note that $\I_r \le \frac{1}{r^2}$ by Lemma~\ref{lem:IrBounded} and so condition 3) implies that $1/\log\frac{1}{\delta} \le (\log\log\frac{1}{\delta})/\log \frac{1}{\delta} \le \frac{1}{\gamma}$
      \item $1+\rho_r$: $\sqrt{1+O(1/\gamma)} = 1+O(1/\gamma)$. It suffices to check that
      $ \left(\frac{2\log\frac{4\log\frac{1}{\delta}}{r^2 \I_r(1-\frac{\beta}{\gamma})\delta}}{n}\right)^{\frac{1}{4}} = O(1/\sqrt{\gamma})$.
      The fact that $\log \frac{\log \frac{1}{\delta}}{\delta} = O(\log\frac{1}{\delta})$ together with condition 3) imply that the quantity is bounded by $O\left(\frac{(1+O(\gamma))\log\frac{1}{\delta}}{n}\right)^{\frac{1}{4}}$, which in turn is bounded by $O(1/\sqrt{\gamma})$ by condition 2).
      \item $1/(1-\beta/\gamma) \le 1+O(1/\gamma)$ since $\beta$ is a constant
      \item $\sqrt{1+\frac{\log\frac{4\log\frac{1}{\delta}}{r^2 \I_r (1-\frac{\beta}{\gamma})}}{\log\frac{1}{\delta}}}$: Note that $\frac{\log \frac{4\log\frac{1}{\delta}}{1-\frac{\beta}{\gamma}}}{\log\frac{1}{\delta}} = O(\frac{\log\log\frac{1}{\delta}}{\log\frac{1}{\delta}})= O(1/\gamma)$ as before.
      Also, condition 3) implies that $(\log\frac{1}{r^2 \I_r})/(\log \frac{1}{\delta}) \le (\log\frac{1}{r^2 \I_r})(\log \log \frac{1}{\delta})/(\log \frac{1}{\delta}) \le 1/\gamma$.
  \end{itemize}
\end{proof}

The rest of this appendix is on proving Lemma~\ref{lem:localMLE}, which via reparameterization gives Theorem~\ref{thm:localMLE}.

We first show a utility lemma (Lemma~\ref{lem:smoothed}), before using it to prove Lemmas~\ref{lem:grad_expectation_theta} and~\ref{lem:var_close_to_fisher}, which bound the expectation and variance of the empirical score function.
After that, we prove Lemma~\ref{lem:localMLE}.

\begin{lemma}\label{lem:smoothed}
  Let $w_r$ be a Gaussian with standard deviation $r$, $f$ be an
  arbitrary probability distribution, and $f_r$ be the $r$-smoothed version of $f$.
  Define
  \[
    \Delta_\eps(x) := \frac{f_r(x+\eps) - f_r(x) - \eps f_r'(x)}{f_r(x)}.
  \]
  Then for any $\abs{\eps} \leq r/2$,
  \[
    \E_{x \sim f_r}\left[ \Delta_{\eps}(x)^2\right] \lesssim \frac{\eps^4}{r^4}.
  \]
\end{lemma}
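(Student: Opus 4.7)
The plan is to use Lemma~\ref{lem:p-z} to express both $f_r(x+\eps)/f_r(x)$ and $s_r(x)$ as conditional expectations over the latent Gaussian noise $Z_r \mid x$, which collapses $\Delta_\eps(x)$ to a single conditional expectation. Concretely, Lemma~\ref{lem:p-z} gives
\[
  \frac{f_r(x+\eps)}{f_r(x)} = \E_{Z_r\mid x}\!\left[e^{\frac{2\eps Z_r - \eps^2}{2r^2}}\right], \qquad s_r(x) = \E_{Z_r\mid x}\!\left[\frac{Z_r}{r^2}\right],
\]
so
\[
  \Delta_\eps(x) = \E_{Z_r\mid x}\!\left[e^{\frac{2\eps Z_r - \eps^2}{2r^2}} - 1 - \frac{\eps Z_r}{r^2}\right].
\]

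Next I would square this and push the square inside via Jensen's inequality, which is the crucial step that eliminates the dependence on $f$: letting $g(z) := e^{(2\eps z - \eps^2)/(2r^2)} - 1 - \eps z/r^2$,
\[
  \E_{x\sim f_r}\!\left[\Delta_\eps(x)^2\right] \le \E_{x\sim f_r}\!\left[\E_{Z_r\mid x}[g(Z_r)^2]\right] = \E_{Z_r\sim\Normal(0,r^2)}[g(Z_r)^2],
\]
where the last equality uses that the marginal of $Z_r$ (unconditioned on $x$) is $\Normal(0,r^2)$ by the independence of $Y$ and $Z_r$ in the construction $X = Y+Z_r$.

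It remains to bound the pure Gaussian integral. Rescaling to $W = Z_r/r \sim \Normal(0,1)$ and $u = \eps/r$ (with $|u| \le 1/2$ by hypothesis), the task reduces to bounding $\E_W[(e^{uW - u^2/2} - 1 - uW)^2]$. The cleanest way is to expand $e^{uW-u^2/2}$ in probabilists' Hermite polynomials as $\sum_{k\ge 0} \frac{u^k}{k!} H_k(W)$, so that the first two terms cancel the $1 + uW$ and we are left with
\[
  \E_W\!\left[\Bigl(\sum_{k\ge 2} \tfrac{u^k}{k!} H_k(W)\Bigr)^{\!2}\right] = \sum_{k\ge 2} \frac{u^{2k}}{k!}
\]
by the orthogonality relation $\E[H_j H_k] = k!\,\delta_{jk}$. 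For $|u|\le 1/2$ this sum is at most $\tfrac{u^4}{2}\cdot\tfrac{1}{1-u^2} \le \tfrac{2}{3}u^4$, giving $\E_x[\Delta_\eps(x)^2] \lesssim \eps^4/r^4$ as required.

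The main obstacle is really just identifying the right setup — once $\Delta_\eps(x)$ is written as a conditional expectation of $g(Z_r)$ and Jensen is applied, the problem becomes an explicit Gaussian computation independent of $f$. A reader who doesn't like Hermite polynomials can instead expand $g(z)$ as a power series, use $\E[Z_r^{2j}] = r^{2j}(2j-1)!!$, and verify the same $O(\eps^4/r^4)$ bound by direct term-by-term estimation; the only care needed is to confirm that the series is convergent and dominated by its $\eps^4$ term in the regime $|\eps|\le r/2$.
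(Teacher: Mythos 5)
Your proposal is correct, and its skeleton is the same as the paper's: you use Lemma~\ref{lem:p-z} to write $\Delta_\eps(x) = \E_{Z_r\mid x}[\alpha_\eps(Z_r)]$ with $\alpha_\eps(z) = e^{(2\eps z-\eps^2)/(2r^2)}-1-\eps z/r^2$, then push the square inside by Jensen and use that the marginal of $Z_r$ is $\Normal(0,r^2)$ to reduce everything to a pure Gaussian second-moment bound, exactly as in the paper's display culminating in its equation~\eqref{eq:Zgoal}. The only place you diverge is the final Gaussian computation: the paper splits into the regimes $|\eps z|\le r^2$ (Taylor expansion giving $|\alpha_\eps(z)|\lesssim \eps^2/r^2+(\eps z/r^2)^2$) and $|\eps z|\ge r^2$ (an exponentially small tail term), whereas you expand $e^{uW-u^2/2}$ in Hermite polynomials and use orthogonality to get the exact value $\sum_{k\ge 2}u^{2k}/k! = e^{u^2}-1-u^2$ with $u=\eps/r$, which for $|u|\le 1/2$ is at most $\tfrac{2}{3}u^4$. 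Your route is arguably cleaner: it avoids the case analysis and tail estimate entirely, produces an explicit constant, and is rigorous as stated since $e^{uW}$ lies in $L^2$ of the Gaussian measure so the Hermite expansion converges in $L^2$ (one could equally get the same closed form by directly computing $\E[e^{2uW-u^2}]$, $\E[uWe^{uW-u^2/2}]$, and $\E[(1+uW)^2]$). What the paper's cruder case-split buys is nothing extra here; it is simply a more generic way to handle the integral.
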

\begin{proof}
    By Lemma~\ref{lem:p-z}, we have
  \[
    \Delta_{\eps}(x) = \frac{f_r(x +\eps) - f_r(x) - \eps f_r'(x)}{f_r(x)} = \E_{Z_r \mid x} (e^{\frac{2 \eps Z_r - \eps^2}{2r^2}} - 1 - \frac{\eps Z_r}{r^2} ).
  \]
  Define
  \[
    \alpha_\eps(z) := e^{\frac{2 \eps z - \eps^2}{2r^2}} - 1 - \frac{\eps z}{r^2}.
  \]
  We want to bound
  \begin{align}
     &\E_X\left[\Delta_{\eps}(x)^2\right]\notag\\
     &= \E_X\left[\E_{Z_r \mid X} \left[\alpha_\eps(Z_r) \right]^2\right]\notag\\
     &\leq \E_{X,Z_r}\left[\left(\alpha_\eps(Z_r) \right)^2\right]\notag\\
     &= \E_{Z_r \sim N(0, r^2)} \left(\alpha_\eps(Z_r) \right)^2.\label{eq:Zgoal}
  \end{align}
  Finally, we bound this term~\eqref{eq:Zgoal}.

  When $\abs{\eps z} \leq  r^2$, we have by a Taylor expansion that
  \[
    e^{\frac{2 \eps z - \eps^2}{2r^2}} = 1 + \frac{\eps z}{r^2} - \frac{\eps^2}{2r^2} + O\left(\left(\frac{2 \eps z - \eps^2}{2r^2}\right)^2\right)
  \]
  and so
  \begin{align*}
    \left|\alpha_\eps(z)\right| \lesssim \frac{\eps^2}{r^2} + \left(\frac{\eps z}{r^2}\right)^2
  \end{align*}
  or
  \begin{align}
    \E_{Z_r \sim N(0, r^2)} \left(\alpha_\eps(Z_r)^2 \cdot \1_{\abs{\eps Z_r} \leq r^2}\right)^2 \lesssim \frac{\eps^4}{r^4}.\label{eq:zsmall}
  \end{align}
  On the other hand, for $\abs{\eps z} \geq r^2$,
  \[
    \abs{\alpha_\eps(z) } \leq e^{\abs{\frac{\eps z}{r^2}}}
  \]
  so
  \begin{align*}
    \E_{Z_r \sim N(0, r^2)} \left(\alpha_\eps(Z_r)^2 \cdot \1_{\abs{\eps Z_r} \geq r^2}\right)^2
    &\leq 2\int_{\abs{r^2/\eps}}^\infty \frac{1}{\sqrt{2 \pi r^2}}e^{\frac{2\abs{\eps z}}{r^2}} e^{-\frac{z^2}{2r^2}} \,\d z\\
    &= 2e^{2 \eps^2/r^2} \int_{\abs{r^2/\eps}}^\infty \frac{1}{\sqrt{2 \pi r^2}}e^{-\frac{(z - 2\abs{\eps})^2}{2r^2}} \,\d z\\
    &\leq 2 \sqrt{e} \Pr[z \geq r^2/\abs{\eps} - 2\abs{\eps}]\\
    &\lesssim e^{- \frac{(\abs{r^2/\eps} - 2 \abs{\eps})^2}{2r^2}}\\
    &\leq e^{- \frac{r^2}{8\eps^2}} \lesssim \frac{\eps^4}{r^4}.
  \end{align*}
  Which combines with~\eqref{eq:Zgoal} and~\eqref{eq:zsmall} to give the result.
\end{proof}

We are now ready to prove Lemma~\ref{lem:grad_expectation_theta}, which bounds the expectation of the empirical score function.

\begin{restatable}{lemma}{LemGradExpectation}
  \label{lem:grad_expectation_theta}
  Suppose $f_r$ is an $r$-smoothed distribution with Fisher information $\I_r$.
  Then, for any $\abs{\eps} \leq r/2$, the expected score
  $\E_{x\sim f_r}\left[s_r(x+\eps) \right]$ satisfies
  \[
    \E_{x\sim f_r}\left[s_r(x+\eps)\right] = - \I \eps + \Theta\left(\sqrt{\I_r} \frac{\eps^2}{r^2}\right)
  \]
  
\end{restatable}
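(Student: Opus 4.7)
The plan is to express $\E_{x \sim f_r}[s_r(x+\eps)]$ as an integral against $f_r'$ via a change of variables, then use Lemma~\ref{lem:smoothed} to produce the leading $-\I_r \eps$ term plus a small remainder. The key observation is that by substituting $u = x + \eps$, we have
\[
  \E_{x \sim f_r}[s_r(x+\eps)] = \int f_r(x) \frac{f_r'(x+\eps)}{f_r(x+\eps)} \, \d x = \int f_r(u-\eps) \, s_r(u) \, \d u.
\]

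Next, I would invoke Lemma~\ref{lem:smoothed} at shift $-\eps$ (valid since $|-\eps| \le r/2$) to write
\[
  f_r(u-\eps) = f_r(u) - \eps\, f_r'(u) + f_r(u)\, \Delta_{-\eps}(u).
\]
Substituting this decomposition into the integral yields three pieces: $\int f_r'(u)\,\d u = 0$, the Fisher information term $-\eps \int (f_r'(u))^2/f_r(u)\,\d u = -\I_r \eps$, and the remainder $\int f_r'(u)\, \Delta_{-\eps}(u)\,\d u = \E_{u \sim f_r}[s_r(u)\, \Delta_{-\eps}(u)]$.

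It then remains to bound the remainder. By Cauchy--Schwarz,
\[
  \left|\E_{u \sim f_r}[s_r(u)\, \Delta_{-\eps}(u)]\right| \le \sqrt{\E_{u \sim f_r}[s_r^2(u)]} \cdot \sqrt{\E_{u \sim f_r}[\Delta_{-\eps}^2(u)]} = \sqrt{\I_r}\cdot \sqrt{\E_{u \sim f_r}[\Delta_{-\eps}^2(u)]},
\]
and Lemma~\ref{lem:smoothed} bounds the second factor by $O(\eps^2/r^2)$. Combining, the remainder is $O(\sqrt{\I_r}\, \eps^2/r^2)$, matching the $\Theta(\sqrt{\I_r}\, \eps^2/r^2)$ error notation in the statement (interpreted as $\pm O(\cdot)$).

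I do not expect a serious obstacle here: Lemma~\ref{lem:smoothed} is essentially tailor-made for this argument, and the change of variables immediately puts $s_r$ into a form where it is integrated against $f_r'$, which is how the Fisher information appears. The only subtlety worth checking is that the ``$\Theta$'' in the lemma statement is really an upper bound on the magnitude rather than a two-sided asymptotic, since Cauchy--Schwarz only gives the upper direction; but this is consistent with how the bound is used in Lemma~\ref{lem:localMLE}, where only the upper bound on the error enters the analysis.
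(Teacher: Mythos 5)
Your proposal is correct and follows essentially the same route as the paper: change of variables to put $s_r$ against the shifted density, decompose $f_r(u-\eps)$ via $\Delta_{-\eps}$, and bound the remainder $\E[s_r\,\Delta_{-\eps}]$ by Cauchy--Schwarz together with Lemma~\ref{lem:smoothed}. Your closing remark is also consistent with the paper, whose proof likewise only establishes the one-sided bound $\left|\E_{x\sim f_r}[s_r(x+\eps)] + \eps\I_r\right| \lesssim \sqrt{\I_r}\,\eps^2/r^2$.
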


\begin{proof}
By definition of $s_r$,
  \begin{align*}
    \E_{x\sim f_r}\left[s_r(x+\eps) \right] &= \int_{-\infty}^{\infty}  \frac{f_r(x) f_r'(x+\eps)}{f_r(x+\eps)} \,\d x\\
    &= \int_{-\infty}^{\infty} f_r'(x) \frac{f_r(x-\eps) - f_r(x)}{f_r(x)} \,\d x
  \end{align*}
  Since by definition of $\I_r$,
  \[
    \I_r:= \int_{-\infty}^{\infty}  \frac{f_r'(x)^2}{f_r(x)} \,\d x,
  \]
  \begin{align*}
   \E_{x\sim f_r}\left[s_r(x+\eps) \right]  + \eps \I_r&= \int_{-\infty}^{\infty} \frac{f_r'(x)}{f_r(x)} (f_r(x-\eps) - f_r(x) + \eps f_r'(x)) \,\d x\\
    & = \E\left[ s_r(x) \cdot \Delta_{-\eps}(x) \right]
  \end{align*}
  where $\Delta_\eps(x) := \frac{f_r(x+\eps) - f_r(x) - \eps f_r'(x)}{f_r(x)}$.  Thus
  \begin{align*}
    \left(\E_{x\sim f_r}\left[s_r(x+\eps) \right]  + \eps \I_r\right)^2
    & \leq \E\left[ s_r(x)^2\right] \E\left[ \Delta_{-\eps}(x)^2 \right]\\
    &= \I_r\E\left[ \Delta_{-\eps}(x)^2 \right]
  \end{align*}
  By Lemma~\ref{lem:smoothed}, we have that
  \[
    \E\left[ \Delta_{\eps}(x)^2 \right] \lesssim \frac{\eps^4}{r^4}.
  \]
  and so
  \[
    \left|\E_{x\sim f_r}\left[s_r(x+\eps) \right]  + \eps \I_r\right| \lesssim \sqrt{ \I_r} \frac{\eps^2}{r^2}
  \]
  as desired.
\end{proof}

\begin{restatable}{lemma}{LemVarFisher}
  \label{lem:var_close_to_fisher}
  Suppose $f_r$ is an $r$-smoothed distribution with Fisher information $\I_r$. Then, for any $|\eps| \le r/2$, the second moment of the score satisfies
\[
  \E_{x\sim f_r}\left[ s_r^2(x + \eps)\right] \le \I_r + O\left( \frac{\eps}{r} \I_r \sqrt{\log \frac{1}{r^2\I_r}}\right)
\]
\end{restatable}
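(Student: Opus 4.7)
The plan is to start from
\[ \E_{x \sim f_r}\bigl[s_r^2(x+\eps)\bigr] - \I_r = \int \bigl(f_r(x-\eps) - f_r(x)\bigr)\, s_r^2(x) \,\d x, \]
obtained by the change of variable $u = x+\eps$, and then add and subtract $\eps f_r'(x)$ inside the integrand to recognize the second-order discrepancy $\Delta_{-\eps}$ from Lemma~\ref{lem:smoothed}:
\[ \E_x\bigl[s_r^2(x+\eps)\bigr] - \I_r = -\eps\,\E[s_r^3(x)] + \E\bigl[\Delta_{-\eps}(x)\, s_r^2(x)\bigr]. \]
The mean-zero property $\E_{f_r}[\Delta_{-\eps}] = \int\bigl(f_r(x-\eps) - f_r(x) + \eps f_r'(x)\bigr)\,\d x = 0$ and the $L^2$ control $\|\Delta_{-\eps}\|_{L^2(f_r)}^2 \lesssim \eps^4/r^4$ from Lemma~\ref{lem:smoothed} are the workhorses in the remaining arguments.

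For the first summand, I would just bound $|\eps\,\E[s_r^3]| \le \eps\,\E[|s_r|^3]$ using Lemma~\ref{lem:score-moments} with $k=3$ to get $O((\eps/r)\,\I_r)$, which is absorbed into the target bound.

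For the second summand, I would truncate the score at level $M := c\sqrt{\I_r\log(1/(r^2\I_r))}$ for a sufficiently large constant $c$ and split $s_r^2 = s_r^2\,\1_{|s_r|\le M} + s_r^2\,\1_{|s_r|>M}$. The bounded piece is handled by Cauchy--Schwarz after using the mean-zero property of $\Delta_{-\eps}$ to subtract off $\E_{f_r}[s_r^2\,\1_{|s_r|\le M}]$, which turns the second $L^2$ factor into the standard deviation of the truncated $s_r^2$. Since the variance is at most $\E[s_r^4\,\1_{|s_r|\le M}] \le M^2\,\I_r$, the bounded piece is at most
\[ \|\Delta_{-\eps}\|_{L^2(f_r)} \cdot M\sqrt{\I_r} \lesssim \frac{\eps^2}{r^2}\,M\sqrt{\I_r} = O\!\left(\left(\tfrac{\eps}{r}\right)^{2} \I_r \sqrt{\log\tfrac{1}{r^2\I_r}}\right), \]
which is within the target since $\eps \le r/2$.

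The main obstacle is the tail piece $|\E[\Delta_{-\eps}\, s_r^2\,\1_{|s_r|>M}]|$, which I would bound by Cauchy--Schwarz as $O(\eps^2/r^2) \cdot \sqrt{\E[s_r^4\,\1_{|s_r|>M}]}$ and then attack by carefully integrating the sub-Gamma tail $\Pr[|s_r|>t] \le 2\exp(-t^2/(2(\I_r + 15t/r)))$ that follows from Lemma~\ref{lem:offcenter-score-moments} at $\eps = 0$. The delicacy is that this tail has two regimes---a Gaussian regime for $t \lesssim \I_r\, r$ and an exponential regime beyond---and the threshold $M$ may sit in either depending on the size of $r^2\I_r$. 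The constant $c$ in $M$ must be chosen large enough so that the tail contribution to $\E[s_r^4\,\1_{|s_r|>M}]$ is at most $O(\I_r^{2}\,\log(1/(r^2\I_r)))$, which after multiplication by $\eps^2/r^2$ and the $\eps \le r/2$ bound brings the tail piece within the target $O((\eps/r)\,\I_r\,\sqrt{\log(1/(r^2\I_r))})$.
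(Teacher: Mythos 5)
Your overall decomposition $\E_x[s_r^2(x+\eps)] - \I_r = -\eps\,\E[s_r^3] + \E[\Delta_{-\eps}\,s_r^2]$ is valid, the cubic term and the bounded piece are handled correctly (modulo the same implicit convention as the paper's statement that $\log\frac{1}{r^2\I_r}\gtrsim 1$, so that additive $O(\frac{\eps}{r}\I_r)$ terms are absorbable), but the step you yourself flagged as the main obstacle is a genuine gap. The claim that choosing the constant $c$ in $M=c\sqrt{\I_r\log\frac{1}{r^2\I_r}}$ large enough forces $\E[s_r^4\,\1_{|s_r|>M}]\lesssim \I_r^2\log\frac{1}{r^2\I_r}$ is false for $r$-smoothed distributions in general. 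Take $f=(1-p)\,\Normal(0,\sigma^2)$ plus mass $p$ at a far-away point $T$, with $\sigma\gg r$ and $p=r^2/\sigma^2$; then $f_r$ is a wide Gaussian plus a bump of mass $p$ and width $r$, so $\I_r\asymp 1/\sigma^2$, $r^2\I_r\ll 1$, and $M\asymp\sqrt{\log(\sigma/r)}/\sigma\ll 1/r$. On the bump the score is $\asymp |x-T|/r^2$, which exceeds $M$ on essentially the whole bump, so $\E[s_r^4\,\1_{|s_r|>M}]\gtrsim p/r^4 = 1/(\sigma^2 r^2)$, exceeding $\I_r^2\log\frac{1}{r^2\I_r}\asymp \log(\sigma/r)/\sigma^4$ by a factor $\asymp\sigma^2/(r^2\log(\sigma/r))$. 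This also explains why integrating the sub-Gamma tail cannot save the step: the tail scale from Lemma~\ref{lem:offcenter-score-moments} is $15/r$, and at threshold $M\ll 1/r$ the exponent is $\asymp Mr\asymp\sqrt{r^2\I_r\log\frac{1}{r^2\I_r}}\ll 1$, i.e.\ there is no decay at all at the level where you truncate; and no larger truncation level is allowed, since your bounded-piece bound needs $M\lesssim\sqrt{\I_r\log\frac{1}{r^2\I_r}}$.

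Moreover, even granting the true value of the tail fourth moment, the global Cauchy--Schwarz decoupling is structurally too lossy: in the same example it yields a tail-piece bound $\asymp\frac{\eps^2}{r^2}\sqrt{p/r^4}=\frac{\eps^2}{r^3\sigma}$, which for $\eps$ of order $r$ exceeds the target $\frac{\eps}{r}\I_r\sqrt{\log\frac{1}{r^2\I_r}}\asymp\frac{\eps\sqrt{\log(\sigma/r)}}{r\sigma^2}$ by a factor $\asymp\frac{\eps\sigma}{r^2\sqrt{\log(\sigma/r)}}\gg 1$. The true contribution of the bump to $\E[\Delta_{-\eps}s_r^2\,\1_{|s_r|>M}]$ is only $\asymp p\,\eps^2/r^4$, because on the bump $\Delta_{-\eps}$ is itself of size $O(\eps^2/r^2)$; pairing the \emph{global} norm $\|\Delta_{-\eps}\|_{L^2(f_r)}$ with a fourth moment concentrated on a region of probability $p$ loses a factor of order $1/\sqrt{p}$. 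This is precisely what the paper's proof avoids: it never separates the two factors, but writes both $\frac{f_r(x-\eps)-f_r(x)}{f_r(x)}$ and $s_r(x)$ as conditional expectations over $Z_r\mid x$ via Lemma~\ref{lem:p-z}, applies a rearrangement (positive-correlation) inequality pointwise in $x$, and truncates the \emph{noise} $Z_r$ at level $\alpha r$ with $\alpha\asymp\sqrt{\log\frac{1}{r^2\I_r}}$ rather than truncating the score, so high-score regions are automatically charged with their own small probability mass. To repair your argument you would need some such localized treatment of the tail piece; the Cauchy--Schwarz-plus-score-truncation route as proposed does not go through.
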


\begin{proof}

  We have that
  \begin{align*}
   \E_{x\sim f_r}\left[ s_r^2(x + \eps)\right] &= \int_{-\infty}^{\infty} f_r(x) \left(\frac{f_r'(x+\eps)}{f_r(x+\eps)}\right)^2 \,\d x\\
                          &= \int_{-\infty}^{\infty} f_r(x-\eps) \left(\frac{f_r'(x)}{f_r(x)}\right)^2 \,\d x\\
                          &= \I_r + \int_{-\infty}^{\infty} (f_r(x-\eps) - f_r(x)) \left(\frac{f_r'(x)}{f_r(x)}\right)^2 \,\d x
  \end{align*}
  By Lemma~\ref{lem:p-z}, we have
  \[
    \frac{f_r(x - \eps) - f_r(x)}{f_r(x)} = \E_{Z_r \mid x} \left(e^{\eps Z_r/r^2 - \eps^2/2r^2} -
    1\right).
  \]

  We have that
  \[
    \frac{f_r'(x)}{f_r(x)} = \E_{Z_r \mid x} \frac{Z_r}{r^2}.
  \]
  so that we need to bound
  \begin{align}
    \E_{x\sim f_r}\left[ s_r^2(x + \eps)\right]-\I_r = \E_x\left[ \left(\E_{Z_r \mid x} (e^{\eps Z_r/r^2 - \eps^2/2r^2} - 1)\right)\left(\E_{Z_r \mid x} \frac{Z_r}{r^2}\right)^2 \right].\label{eq:varbound-goal}
  \end{align}
  We can get bound this as follows.
  The standard rearrangement inequality states that, if $g, h$ are
  monotonically non-decreasing functions,
  $\E_z[g(z)]\E_z[h(z)] \leq \E[g(z) h(z)]$.
  Therefore, for any $x$ and
  parameter $\alpha \lesssim r/\eps$,

  \begin{align*}
    \left(\E_{Z_r \mid x} (e^{\eps Z_r/r^2 - \eps^2/2r^2} - 1)\right)\left(\E_{Z_r \mid x} \frac{Z_r}{r^2}\right)^2
    &\leq \left(O(\eps \alpha/r) + \E_{Z_r \mid x} \1_{Z_r > \alpha r}(e^{\eps Z_r/r^2} - 1)\right)\left(\E_{Z_r \mid x} \frac{Z_r}{r^2}\right)^2\\
    &\leq O(\eps \alpha/r)\left(\E_{Z_r \mid x} \frac{Z_r}{r^2}\right)^2 + \left(\E_{Z_r \mid x} \1_{Z_r > \alpha r} \frac{Z_r}{r^2} (e^{\eps Z_r/r^2} - 1)\right)\left(\E_{Z_r \mid x} \frac{Z_r}{r^2}\right)\\
    &\leq O(\eps \alpha/r)\left(\E_{Z_r \mid x} \frac{Z_r}{r^2}\right)^2 + \E_{Z_r \mid x} \left( \1_{Z_r > \alpha r} \frac{Z_r^2}{r^4} (e^{\eps Z_r/r^2} - 1)\right)
  \end{align*}
  Thus
  \begin{align*}
    \E_{x\sim f_r}\left[ s_r^2(x + \eps)\right] - \I_r \leq O(\eps \alpha/r) \I_r + \E_{Z_r}\left[\1_{Z_r > \alpha r} \frac{Z_r^2}{r^4} (e^{\eps Z_r/r^2} - 1)\right].
  \end{align*}
  The contribution to the second term from $|Z_r| \lesssim r^2/\eps$ is
  \[
    \eps \E_{Z_r}[\1_{Z_r > \alpha r} \frac{Z_r^3}{r^6}] \lesssim \frac{\eps \alpha^3}{r^3} e^{-\alpha^2/2}.
  \]
  The contribution from $|Z_r| \gtrsim r^2/\eps$ is negligible (as in the proof of Lemma~\ref{lem:smoothed}) because the chance of such $Z_r$ is
  exponentially small:
  \begin{align*}
      \E_{Z_r}\left[\1_{Z_r \gtrsim \max(r^2/\eps,\alpha r)} \frac{Z_r^2}{r^4} (e^{\eps Z_r/r^2} - 1)\right] &\le  \int_{\Omega(r^2/\eps+\alpha r)}^\infty \frac{1}{\sqrt{2 \pi r^2}} \frac{z^2}{r^4} \, e^{\frac{\eps z}{r^2}} e^{-\frac{z^2}{2r^2}} \,\d z\\
      &= e^{\frac{\eps^2}{2r^2}}\int_{\Omega(r^2/\eps+\alpha r)}^\infty \frac{1}{\sqrt{2 \pi r^2}} \frac{z^2}{r^4} \, e^{-\frac{(z-\eps)^2}{2r^2}} \,\d z\\
      &\lesssim \int_{\Omega(r^2/\eps+\alpha r)}^\infty \frac{1}{\sqrt{2 \pi r^2}} \frac{z^2}{r^4} \, e^{-\frac{z^2}{2r^2}} \,\d z \quad \text{since $\eps \le r/2$}\\
      &= \frac{1}{r^2} \int_{\Omega(r/\eps+\alpha)}^\infty \frac{1}{\sqrt{2 \pi}} z^2 \, e^{-\frac{z^2}{2}} \,\d z\\
      &\lesssim \frac{1}{r^2}O\left(e^{-\Omega(r^2/\eps^2 + \alpha^2)}\right) \lesssim \frac{1}{r^2} \frac{\eps}{r}e^{-\Omega(\alpha^2)}
  \end{align*}
  
  Thus, we have established that
  \begin{align*}
    \E_{x\sim f_r}\left[ s_r^2(x + \eps)\right] - \I_r \lesssim \frac{\eps \alpha}{r}\left(\I_r + \frac{ \alpha^2}{r^2} e^{-\alpha^2/2} + \frac{1}{r^2}e^{-\Omega(\alpha^2)}\right)
  \end{align*}
  Set $\alpha = O(\sqrt{ \log \frac{1}{r^2\I_r}})$ and recall from Lemma~\ref{lem:IrBounded} that $\I_r \le 1/r^2$.
  Therefore, the first term dominates the right hand side, and the lemma follows.
  
\end{proof}

With the above lemmas, we are now ready to prove Lemma~\ref{lem:localMLE}, which we also restate here for the reader's convenience.

\LemLocalMLE*

\begin{proof}
  Without loss of generality, we only show the $\lambda - \eps$ case, and the $\lambda+\eps$ case follows by doubling the failure probability.
  
  Combining Corollary~\ref{cor:scoreestimate}, Lemmas~\ref{lem:grad_expectation_theta} and~\ref{lem:var_close_to_fisher} as well as the assumption that $r^2\sqrt{\I_r} \ge \gamma \eps_{\max}$, we have that, for all $0 < \eps < \min(|r|,\eps_{\max})$, with probability at most $\delta$, we have
  \begin{align*}
      \hat{s}(\lambda-\eps) - \left(- \I_r\eps + \beta\sqrt{\I_r}\frac{\eps^2}{r^2}\right) &\le \sqrt{\frac{2\log\frac{1}{\delta}}{n} \I_r}\sqrt{1+\eta\frac{\eps}{r}\sqrt{\log\frac{1}{r^2 \I_r}}} + \frac{15\log\frac{1}{\delta}}{nr}\\
      &\le \sqrt{\frac{2\log\frac{1}{\delta}}{n} \I_r}\sqrt{1+\eta\frac{\eps_{\max}}{r}\sqrt{\log\frac{1}{r^2 \I_r}}} + \frac{15\log\frac{1}{\delta}}{nr}\\
      &\le \sqrt{\frac{2\log\frac{1}{\delta}}{n} \I_r}\sqrt{1+\eta\frac{\eps_{\max}}{r}\frac{1}{\sqrt{r^2 \I_r}}} + \frac{15\log\frac{1}{\delta}}{nr}\\
      &\le \sqrt{1+\frac{\eta}{\gamma}}\sqrt{\frac{2\log\frac{1}{\delta}}{n} \I_r} + \frac{15\log\frac{1}{\delta}}{nr}\\
      &\le \sqrt{1+\frac{\eta}{\gamma}}\sqrt{\frac{2\log\frac{1}{\delta}}{n} \I_r} + \frac{15}{2\sqrt{\gamma}}\left(\frac{2\log\frac{1}{\delta}}{n}\right)^{\frac{1}{4}}\sqrt{\frac{2\log\frac{1}{\delta}}{n} \I_r}\\
      &= \left(\sqrt{1+\frac{\eta}{\gamma}} + \frac{15}{2\sqrt{\gamma}}\left(\frac{2\log\frac{1}{\delta}}{n}\right)^{\frac{1}{4}} \right)\sqrt{\frac{2\log\frac{1}{\delta}}{n} \I_r}
  \end{align*}
  where the last inequality is due to the assumption that $r^2\sqrt{\I_r} \ge \gamma \eps_{\max} \ge \gamma \sqrt{\frac{2\log\frac{1}{\delta}}{n} \I_r}$.
  
  For the rest of the proof, we denote the multiplicative term $\sqrt{1+\frac{\eta}{\gamma}} + \frac{15}{2\sqrt{\gamma}}\left(\frac{2\log\frac{1}{\delta}}{n}\right)^{\frac{1}{4}}$ simply by $1+\rho$, as defined in the theorem statement.
  
  We note also that, since $r^2\sqrt{\I_r} \ge \gamma \eps$, we have $-\I_r\eps + \beta\sqrt{\I_r}\frac{\eps^2}{r^2} \le \left(-1+\frac{\beta}{\gamma}\right)\I_r\eps$.
  Therefore, for any $0 < \eps < \min(|r|,\eps_{\max})$, we have that with probability at least $1-\delta$,
  \[ \hat{s}(\lambda-\eps) \le \left(-1+\frac{\beta}{\gamma}\right)\I_r\eps + (1+\rho_r)\sqrt{\frac{2\log\frac{1}{\delta}}{n} \I_r}\]
  
  

  By Lemma~\ref{lem:u-lowerderivative}, we also have that for \emph{any} $x$
  \[ \hat{s}'(x) \le \frac{1}{r^2} \]
  
  Let $\xi \ll 1$ be a parameter that we choose at the end of the proof.
  We will show that with probability at least $1-\delta \cdot \frac{1}{\xi r^2\I_r(1-\frac{\beta}{\gamma})(1-\delta)}$, we have for all $\eps \in \left((1+\xi)\frac{1+\rho_r}{1-\frac{\beta}{\gamma}} \sqrt{\frac{2\log\frac{1}{\delta}}{n \I_r}}, \eps_{\max}\right]$, $\hat{s}(\lambda-\eps) < 0$.
  
  Consider a net $N$ of spacing $\xi r^2(1+\rho_r) \sqrt{\frac{2\log\frac{1}{\delta}}{n}\I_r}$ over the interval $\left((1+\xi)\frac{1+\rho_r}{1-\frac{\beta}{\gamma}} \sqrt{\frac{2\log\frac{1}{\delta}}{n \I_r}}, \eps_{\max}\right]$ in the theorem statement.
  We can check that, if for all points $\eps \in N$, we have
  \begin{equation}
  \label{eq:MLE_net_point}
      \hat{s}(\lambda-\eps) \le -\xi(1+\rho_r) \sqrt{\frac{2\log\frac{1}{\delta}}{n}\I_r}
  \end{equation}
  then, because $\hat{s}' \le 1/r^2$, we have $\hat{s}(x) < 0$ for all $x \in \lambda+\left((1+\xi)\frac{1+\rho_r}{1-\frac{\beta}{\gamma}} \sqrt{\frac{2\log\frac{1}{\delta}}{n \I_r}}, \eps_{\max}\right]$.
  This is done by considering two consecutive net points $0 < \eps_1 < \eps_2$, and observing that $\hat{s}(\lambda-\eps) \le \hat{s}(\lambda-\eps_1) - \frac{\eps_1-\eps}{r^2}$ for $\eps \in (\eps_1,\eps_2]$, which is in turn strictly negative.
  (For the essentially symmetric case of $\lambda+\eps$, we would instead use the inequality that $\hat{s}(\lambda+\eps) \ge \hat{s}(\lambda+\eps_1) + \frac{\eps_1-\eps}{r^2} > 0$.)
  
  Thus, it suffices to bound the probability that the above inequality holds for all points in $N$.
  For a natural number $i \ge 1$, consider the subset $N_i$ of $N$ that intersects with $([i,i+1]+\xi)\frac{1+\rho_r}{1-\frac{\beta}{\gamma}} \sqrt{\frac{2\log\frac{1}{\delta}}{n \I_r}}$, where here we interpret addition and multiplication as scalar operations on every point in the interval.
  For each $\eps \in N_i$, Equation~\ref{eq:MLE_net_point} holds except for probability $\delta^{i^2}$.
  Furthermore, each $N_i$ consists of $\frac{1+\rho_r}{1-\frac{\beta}{\gamma}} \sqrt{\frac{2\log\frac{1}{\delta}}{n \I_r}}$ divided by $\xi r^2(1+\rho_r) \sqrt{\frac{2\log\frac{1}{\delta}}{n}\I_r}$ many points, which equals to $\frac{1}{\xi r^2\I_r(1-\frac{\beta}{\gamma})}$ many points.
  Therefore, the total failure probability is at most
  $\frac{1}{\xi r^2\I_r(1-\frac{\beta}{\gamma})} \cdot \sum_{i \ge 1} \delta^{i^2} \le \delta \cdot \frac{1}{\xi r^2\I_r(1-\frac{\beta}{\gamma})(1-\delta)}$.
  An extra factor of 2 in the failure probability in the theorem statement accounts for the symmetric case of $\hat{s}(\lambda+\eps) > 0$.
\end{proof}
\section{Proof of Theorem~\ref{thm:lowerbound} in Section~\ref{sec:lb}}
\label{app:lb}

The goal of this appendix is to prove Theorem~\ref{thm:lowerbound}, which we restate here for the reader's convenience.

\ThmLB*

We use the standard proof technique of reducing distinguishing two ``close" distributions to estimation.
In particular, we show that it is statistically impossible to distinguish between $f_r$ and $f_r^{2\eps}$ with probability $1-\delta$ using $n$ samples.
In order to show such an indistinguishability result, we need the following standard fact (essentially the Neyman-Pearson lemma):

\begin{fact}
\label{fact:twopoint}
Consider a game, where an adversary picks arbitrarily either distribution $\vec{p}$ or distribution $\vec{q}$, and we want an algorithm which, on input $n$ independent samples from the chosen distribution, decide whether the samples came from $\vec{p}$ or $\vec{q}$, succeeding with probability at least $1-\delta$.
Then, there is no algorithm $\mathcal{A}$ such that:
\[\mathbb{P}(\mathcal{A}\text{ returns }\vec{p}\;|\;\text{adversary picked }\vec{p})-\mathbb{P}(\mathcal{A}\text{ returns }\vec{p}\;|\;\text{adversary picked }\vec{q})>\DTV(\vec{p}^{\otimes n},\vec{q}^{\otimes n})\]
where $\vec{p}^{\otimes n}$ denotes the $n$-fold product distribution of $\vec{p}$.
In particular, this implies that there is no algorithm $\mathcal{A}$ such that both of the following hold:
\begin{itemize}
    \item $\mathbb{P}(\mathcal{A}\text{ returns }\vec{p}\;|\;\text{adversary picked }\vec{p})>\frac{1}{2}+\frac{1}{2}\DTV(\vec{p}^{\otimes n},\vec{q}^{\otimes n})$
    \item $\mathbb{P}(\mathcal{A}\text{ returns }\vec{q}\;|\;\text{adversary picked }\vec{q})>\frac{1}{2}+\frac{1}{2}\DTV(\vec{p}^{\otimes n},\vec{q}^{\otimes n})$
\end{itemize}
So if $\DTV(\vec{p}^{\otimes n},\vec{q}^{\otimes n})<1-2\delta$, there is no algorithm that will succeed in distinguishing between two distributions with probability $\geq 1-\delta$ using only $n$ samples.
\end{fact}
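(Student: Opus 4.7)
The plan is to apply Fact~\ref{fact:twopoint}, which reduces the problem to showing $\DTV(f_r^{\otimes n}, (f_r^{2\eps})^{\otimes n}) < 1 - 2\delta$. Writing $P = f_r$, $Q = f_r^{2\eps}$, and $L = \sum_{i=1}^n Z_i$ with $Z_i = \log(Q(X_i)/P(X_i))$ the log-likelihood ratio, the standard overlap identity gives
\[
\int \min(P^{\otimes n}, Q^{\otimes n}) \;=\; \Pr_P(L \ge 0) \;+\; \Pr_Q(L < 0),
\]
so it suffices to lower-bound each term by $\delta$. By a near-symmetric argument swapping $P \leftrightarrow Q$, I would focus on $\Pr_P(L \ge 0) \ge \delta$.

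The next step is to get sharp estimates of the cumulants of $Z_i$ under $P$. Using the Taylor-type expansion of $\log f_r(x - 2\eps)$ around $x$, combined with Lemma~\ref{lem:smoothed} and the sub-Gamma moment bound on $s_r$ from Lemma~\ref{lem:offcenter-score-moments}, I would show
\[
\E_P[Z_i] = -(1+o(1))\cdot 2\eps^2 \I_r, \qquad \Var_P[Z_i] = (1+o(1))\cdot 4\eps^2 \I_r,
\]
with higher-moment terms controlled by the sub-Gamma tails of $s_r$. Thus $L$ has mean $\approx -2n\eps^2\I_r$ and standard deviation $\approx 2\eps\sqrt{n\I_r}$, so $\{L\ge 0\}$ is a tail event at $\sqrt{n\eps^2\I_r/2}$ standard deviations. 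Plugging in $\eps = (1-o(1))\sqrt{2\log(1/\delta)/(n\I_r)}$ corresponds to a tail at $\sqrt{(1-o(1))\log(1/\delta)}$ standard deviations, which under a pure Gaussian CLT heuristic yields probability exactly $\delta^{1-o(1)}$.

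To turn that heuristic into a rigorous $1+o(1)$-tight lower bound, the plan is to use an Esscher (exponential tilting) change of measure rather than Pinsker or Bretagnolle--Huber, both of which only give constant-factor tightness. Let $\psi(t) = \log \E_P[e^{tZ_i}]$; by convexity, its minimum on $[0,1]$ is attained at some $t^* \in (0,1)$, and expanding to second order in $\eps$ (using the sub-Gamma control to bound the remainder) gives $\min_t \psi(t) = -(1+o(1))\cdot \eps^2 \I_r/2$. Under the tilted law $dP^* \propto e^{t^* L}dP^{\otimes n}$, the sum $L$ has mean zero and variance $\approx n\psi''(t^*) = (1+o(1))\cdot 4n\eps^2\I_r$, so a one-sided anti-concentration bound (for instance Paley--Zygmund together with a fourth-moment estimate, or a refined local CLT on the tilted sum) shows that $\Pr_{P^*}(L\ge 0) \ge \Omega(1)$. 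Reversing the tilt gives
\[
\Pr_P(L\ge 0) \;\ge\; \Omega(1)\cdot e^{\,n\min_t \psi(t)}\;=\;\Omega(1)\cdot e^{-(1+o(1))\,n\eps^2\I_r/2},
\]
which exceeds $\delta$ in the advertised regime.

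The main obstacle, and where I expect the paper's ``delicate and non-standard'' bounding to live, is ensuring that \emph{every} $o(1)$ correction --- in the KL/variance expansions, in $\min_t \psi(t)$, and in the tilted anti-concentration constant --- genuinely tends to $0$ uniformly in the joint regime $\delta \to 0$ and $\log(1/\delta)/n \to 0$ with $r^2 \I_r$ held fixed. The sub-Gamma tail estimates on $s_r$ proven earlier are the essential tool that makes the bookkeeping tractable: they bound the higher moments of $Z_i = -2\eps\, s_r(X_i) + O(\eps^2)$ sharply enough that all error terms in the Esscher analysis collapse into the advertised $o(1)$.
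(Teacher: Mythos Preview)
You have misidentified the statement you are supposed to prove. Fact~\ref{fact:twopoint} is the standard Neyman--Pearson/total-variation bound on hypothesis testing: for \emph{any} pair of distributions $\vec{p},\vec{q}$, no test can separate them beyond $\DTV(\vec{p}^{\otimes n},\vec{q}^{\otimes n})$. The paper states this as a known fact without proof. Your proposal, however, does not prove this fact at all; its very first line \emph{applies} Fact~\ref{fact:twopoint} as a black box and then proceeds to sketch a proof of Theorem~\ref{thm:lowerbound} (the high-probability Cram\'er--Rao lower bound for $r$-smoothed location models). Invoking the statement you are asked to establish is circular.

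A correct proof of Fact~\ref{fact:twopoint} is short and has nothing to do with $f_r$, $\eps$, or Fisher information: for any (possibly randomized) algorithm $\mathcal{A}$ with acceptance region interpreted as a function of the sample, the difference of acceptance probabilities under $\vec{p}^{\otimes n}$ and $\vec{q}^{\otimes n}$ is bounded in absolute value by $\DTV(\vec{p}^{\otimes n},\vec{q}^{\otimes n})$ by the variational characterization of total variation. The two bullet points and the final ``$\DTV<1-2\delta$'' conclusion then follow by adding the two error-probability inequalities.

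As an aside, what you actually wrote is a reasonable high-level plan for Theorem~\ref{thm:lowerbound}, but it differs from the paper's route. The paper does \emph{not} use Esscher tilting; instead it proves a general lemma (Lemma~\ref{lem:newlb}) that lower-bounds $1-\DTV(p^{\otimes n},q^{\otimes n})$ via a restricted Bhattacharyya coefficient $BC_S$ on the event $S=\{\overline{\gamma}\approx 0\}$, together with sub-Gamma concentration of the log-likelihood ratio to control the complementary level sets $S_k$. Your tilting approach could in principle yield the same $1+o(1)$ exponent, but the delicate step you flag---uniform control of all $o(1)$ terms, including the tilted anti-concentration constant---is exactly where the paper's $BC_S$ decomposition sidesteps the need for a local CLT or Paley--Zygmund argument. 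Also note that your overlap identity $\int\min(P^{\otimes n},Q^{\otimes n})=\Pr_P(L\ge 0)+\Pr_Q(L<0)$ is incorrect as written; the correct identity is $\int\min(P^{\otimes n},Q^{\otimes n})=\E_P[\min(1,e^L)]$, which is not the sum you state.
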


Thus, we need to upper bound the $n$-sample total variation distance between $f_r$ and $f_r^{2\eps}$.
Standard inequalities for doing so involve calculating and plugging-in the single-sample KL-divergence $\kl{f_r}{f_r^{2\eps}}$ or squared Hellinger distance $\DH^2(f_r, f_r^{2\eps})$, however, they yield only constant factor tightness in the exponent of $1-\DTV(\vec{p}^{\otimes n},\vec{q}^{\otimes n})$, and hence only constant factor tightness in sample complexity or estimation error lower bounds.
As such, in this paper, we prove a new lemma (Lemma~\ref{lem:newlb}) that involves both the KL-divergence and squared Hellinger distance, as well as assumptions on the concentration of the log-likelihood ratio between $f_r$ and $f_r^{2\eps}$ (which will be satisfied by $r$-smoothed distributions), which allows us to bound the $n$-sample total variation distance tightly.
After that, we calculate the KL divergence and squared Hellinger distance of $f_r$ and $f_r^{2\eps}$ as well as show the concentration of their log likelihood ratio (Appendix~\ref{app:lbcalc}), which when applied to the lemma yields the lower bound result (Appendix~\ref{app:lbwrap}).

\begin{lemma}
\label{lem:newlb}
Consider two arbitrary distributions $p$, $q$.
Let the log-likelihood ratio be defined as $\gamma = \log\frac{q}{p}$.
  Suppose there is a parameter $\kappa > 0$ such that we have the following conditions:
  \begin{enumerate}
  \item $\DH^2(p, q) \leq (1+\kappa) \frac{1}{4}\kl{p}{q}$
  \item $\frac{\kl{p}{q}}{\kl{q}{p}} \in [(1+\kappa)^{-1},1+\kappa]$
  \item $\E_p[\gamma^2] \le (1+\kappa)2\kl{p}{q}$
  \item $\E_q[\gamma^2] \le (1+\kappa)2\kl{p}{q}$
  \item $\E_p[|\gamma|^k] \leq (1+\kappa) \frac{k!}{2} 2\kl{p}{q} \kappa^{k-2}$
  \item $\E_q[|\gamma|^k] \leq (1+\kappa) \frac{k!}{2} 2\kl{p}{q} \kappa^{k-2}$
  \end{enumerate}
  Then, for $\kappa \le 0.01$, $n\kl{p}{q}\gg 1$ and $\kl{p}{q} \ll 1$,
  \[
    1 - \DTV(p^{\otimes n}, q^{\otimes n}) \geq 2e^{-(1 + O(\kappa)+O(1/\sqrt{n \kl{p}{q}}) + O(\kl{p}{q}) )n \kl{p}{q}/4}
  \]
\end{lemma}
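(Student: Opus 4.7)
The plan is to use the Bhattacharyya-style identity
\[
1 - \DTV(p^{\otimes n}, q^{\otimes n}) = \int \min(p^{\otimes n}, q^{\otimes n}) = BC^n \cdot \E_{\mu^{\otimes n}}\bigl[e^{-|\gamma_n|/2}\bigr],
\]
where $BC = \int\sqrt{pq} = 1-\DH^2(p,q)$ is the Bhattacharyya coefficient, $\mu \propto \sqrt{pq}$ is the normalized geometric-mean measure, and $\gamma_n = \sum_{i=1}^n \gamma(X_i)$. The identity follows from the pointwise relation $\min(p,q) = \sqrt{pq}\cdot e^{-|\gamma|/2}$ and the tensorization of $\sqrt{pq}$. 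I would lower bound the two factors $BC^n$ and $\E_\mu[e^{-|\gamma_n|/2}]$ separately, with assumption 1 feeding into the former and the moment assumptions into the latter.

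For the Bhattacharyya factor, assumption 1 gives $BC \geq 1 - (1+\kappa)\kl{p}{q}/4$; taking logs and using $\log(1-x) \geq -x - x^2$ for small $x$ yields
\[
\log BC^n \;\geq\; -(1+\kappa)\,\frac{n\kl{p}{q}}{4}\bigl(1 + O(\kl{p}{q})\bigr),
\]
which already accounts for the leading $n\kl{p}{q}/4$ exponent along with the $O(\kappa)$ and $O(\kl{p}{q})$ relative corrections in the target. For the second factor, I would show $\E_\mu[e^{-|\gamma_n|/2}] = \Omega(1/\sqrt{nV})$ where $V = \Var_\mu[\gamma]$. Since $\sqrt{pq} \leq (p+q)/2$ pointwise and $BC = 1 - O(\kl{p}{q})$, moments under $\mu$ are dominated by sums of moments under $p$ and $q$, so assumptions 3--4 give $V \leq 2(1+O(\kappa))\kl{p}{q}$ and assumptions 5--6 transfer to $\mu$ as sub-gamma tail bounds. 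Lower bounding
\[
\E_\mu\bigl[e^{-|\gamma_n|/2}\bigr] \;\geq\; e^{-T/2}\,\Pr_\mu\bigl[|\gamma_n| \leq T\bigr]
\]
for an optimally chosen $T = \Theta(1)$ and applying Berry--Esseen with the transferred third moment would give $\Pr_\mu[|\gamma_n| \leq T] \gtrsim T/\sqrt{2\pi nV}$ once $n\kl{p}{q} \gg 1$. The polynomial $1/\sqrt{nV}$ is then absorbed into the exponent as the $O(1/\sqrt{n\kl{p}{q}})$ correction, since $(1/2)\log(n\kl{p}{q}) = o(\sqrt{n\kl{p}{q}})$, and the leading constant $2$ in front of the target exponential emerges from the Berry--Esseen and $T$-optimization constants.

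The main obstacle I anticipate is that $\gamma$ is not exactly centered at zero under $\mu$. Expanding $\E_\mu[\gamma] = \int\sqrt{pq}\log(q/p)/BC$ in powers of the discrepancy $q/p-1$ shows that the first two Taylor orders cancel against each other via the definitions of $BC$ and $\DH^2$, but a third-order residual remains; keeping the shift $n\E_\mu[\gamma]$ small relative to $\sqrt{nV}$ requires assumption 2 (to suppress the asymmetric leading terms, effectively forcing $\kl{p}{q}$ and $\kl{q}{p}$ to line up) together with the sub-gamma moment bounds 5--6 to control the higher Taylor coefficients and the Berry--Esseen error. Tracking all these pieces so that the combined error fits cleanly into the claimed orders $O(\kappa) + O(\kl{p}{q}) + O(1/\sqrt{n\kl{p}{q}})$ is the delicate technical step, and is presumably what the paper refers to as its \emph{delicate and non-standard} bounding techniques.
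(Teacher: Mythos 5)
Your route is sound but genuinely different from the paper's. The paper never uses your exact factorization $1-\DTV(p^{\otimes n},q^{\otimes n}) = BC^n\,\E_{\mu^{\otimes n}}[e^{-|\gamma_n|/2}]$ with the tilted measure $\mu \propto \sqrt{pq}$; instead it works with the \emph{restricted} Bhattacharyya coefficient, using $1-\DTV \ge BC_S(p^{\otimes n},q^{\otimes n})^2/(p^{\otimes n}(S)+q^{\otimes n}(S))$ where $S$ is the event that the empirical log-likelihood ratio lies in a narrow window around $0$, and bounds $BC_S \ge BC^n - \sum_{k\ne 0}\sqrt{p^{\otimes n}(S_k)q^{\otimes n}(S_k)}$ over a partition into shells $S_k$, controlling each shell by sub-Gamma (Bernstein) upper-tail bounds under $p$ and $q$ coming directly from conditions 3--6. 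The paper thus needs only upper-tail concentration and no CLT machinery, at the cost of the shell bookkeeping and the $BC_S^2/(p(S)+q(S))$ loss; your identity is exact and cleaner, but it shifts the burden onto an \emph{anti-concentration} (local CLT / Berry--Esseen) lower bound under $\mu$, which in turn requires pinning down the mean and variance of $\gamma$ under $\mu$. That is the one place where your sketch is not yet a proof and where the attribution is slightly off: you need $\E_\mu[\gamma] = O(\kappa\,\kl{p}{q})$ so that the shift penalty $n\E_\mu[\gamma]^2/(2n\Var_\mu[\gamma])$ is $O(\kappa^2 n\kl{p}{q})$, and Taylor expanding $\E_p[\gamma e^{\gamma/2}] = -\kl{p}{q} + \tfrac12\E_p[\gamma^2] + O(\kappa\,\kl{p}{q})$ shows this needs a \emph{two-sided} estimate $\E_p[\gamma^2] = 2\kl{p}{q}(1+O(\kappa))$. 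Conditions 3--4 are only upper bounds, and condition 2 by itself does not close this; the missing lower bound comes from condition 1, since $BC = \E_p[e^{\gamma/2}] \ge 1-(1+\kappa)\tfrac14\kl{p}{q}$ and Taylor expansion (with the cubic remainder controlled by the sub-Gamma moments 5--6) forces $\E_p[\gamma^2] \ge 2\kl{p}{q}(1-O(\kappa))$; the same moment bounds transferred to $\mu$ via $\sqrt{pq}\le(p+q)/2$ then give $\Var_\mu[\gamma]=2\kl{p}{q}(1+O(\kappa))$ and a Berry--Esseen error $O(\kappa/\sqrt{n\kl{p}{q}})$, which is dominated by the window probability $\Theta(1/\sqrt{n\kl{p}{q}})$ since $\kappa\le 0.01$. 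With that step carried out, all the residual factors (the $e^{-T/2}$, the $1/\sqrt{n\Var_\mu[\gamma]}$, the leading constant) are absorbed into the $O(\kappa)+O(1/\sqrt{n\kl{p}{q}})+O(\kl{p}{q})$ slack exactly as you say, so your argument goes through and, incidentally, appears not to need condition 2 at all.
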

\begin{proof}

  Define
  \[
    BC_{S}(p, q) = \int_S \sqrt{pq} \leq \sqrt{p(S) q(S)}
  \]
  to be the restriction of the Bhattacharyya coefficient
  $BC(p, q) = 1 - \DH^2(p, q)$ to a subset $S$ of the domain.  For any
  $S$, we have
  \[
    1-\DTV(p, q)= \int \min(p, q) \geq \int_S \min(p, q) \geq \frac{(\int_S \sqrt{\min(p,q)\max(p,q)})^2}{\int_S \max(p, q)} = \frac{BC_S(p,q)^2}{p(S) + q(S)}.
  \]
  We apply this to $p^{\otimes n}$ and $q^{\otimes n}$, getting for any $S$:
  \begin{align}\label{eq:TVtoBC}
    1-\DTV(p^{\otimes n}, q^{\otimes n}) \geq \frac{BC_S(p^{\otimes n},q^{\otimes n})^2}{p^{\otimes n}(S) + q^{\otimes n}(S)}.
  \end{align}
  
  Thus, the goal now is to find an event $S$ such that $BC_S(p^{\otimes n},q^{\otimes n})$ is big relative to $p^{\otimes n}(S)+q^{\otimes n}(S)$.
  
  For the rest of the proof, we use the notation $\bar{\gamma}$ to denote the $n$-sample empirical log-likelihood ratio, namely $\frac{1}{n}\sum_i \gamma_i = \frac{1}{n}\sum_i \log\frac{q(x_i)}{p(x_i)}$.
  
  We now define $S_k$, for $k \in \Z$, to be the event
  $\{\overline{\gamma} \in [k-\frac{1}{2}, k+\frac{1}{2}]
  \cdot \alpha \kl{p}{q}\}$ for some parameter $\alpha = \Theta(\max(\kappa,1/\sqrt{n\kl{p}{q}},\kl{p}{q}))$, and set
  $S = S_0$.
  We have that
  \begin{align}
    \label{eq:BCtoh}
    BC_S(p^{\otimes n}, q^{\otimes n}) = BC(p^{\otimes n}, q^{\otimes n}) - \sum_{k \neq 0} BC_{S_k}(p^{\otimes n}, q^{\otimes n}) \geq BC(p, q)^n - \sum_{k \neq 0} \sqrt{p^{\otimes n}(S_k)q^{\otimes n}(S_k)}
  \end{align}

  Now, define
  \[
    \delta := e^{-n\min(\kl{p}{q}, \kl{q}{p})/4}
  \]
  We note that $\delta \le (BC(p,q)^n)^{(1-O(\kappa)-O(\DKL(p \, || \, q)))}$, as follows:
  \begin{align*}
      BC(p,q) &= 1-\DH^2(p,q)\\
      &\ge 1-(1+O(\kappa))\frac{1}{4}\min(\kl{p}{q}, \kl{q}{p})\\
      &\ge \exp\left(-(1+O(\kappa)+O(\kl{p}{q}))\frac{1}{4}\min(\kl{p}{q}, \kl{q}{p})\right)
  \end{align*}
  where the first inequality follows from conditions 1 and 2 in the lemma statement, and the second inequality follows from the fact that $1-x = \exp(-(1+\Theta(x))x)$.
  The above claim follows from raising both sides to the power of $n$.
  
  We shall now bound $p^{\otimes n}(S_k)$ and $q^{\otimes n}(S_k)$ in terms of $\delta$.
By standard sub-Gamma concentration bounds, conditions 3--6 imply that
  \begin{align}\label{eq:hgaussianp}
    \Pr_p\left[\overline{\gamma} > -\kl{p}{q} + t \sqrt{2(1+\kappa)\kl{p}{q}} + \frac{\kappa}{2}t^2\right] < e^{-n t^2/2}
  \end{align}
  and
  \begin{align}\label{eq:hgaussianq}
    \Pr_q\left[\overline{\gamma} < \kl{q}{p} - t \sqrt{2(1+\kappa)\kl{p}{q}}-\frac{\kappa}{2}t^2\right] < e^{-n t^2/2}
  \end{align}
  
  We now bound $p^{\otimes n}(S_k)$ by
  \begin{align*}
      p^{\otimes n}(S_k) &\leq \Pr_p\left[\overline{\gamma} \geq \left(k-\frac{1}{2}\right) \alpha \kl{p}{q}\right]
  \end{align*}
  Solving the equation
  \[ \frac{\kappa}{2}t_k^2 + \sqrt{2(1+\kappa)\kl{p}{q}}\,t_k -\kl{p}{q} = \left(k-\frac{1}{2}\right)\alpha\kl{p}{q} \]
  yields
  \[ t_k = \frac{\sqrt{1+\kappa}}{\kappa}\sqrt{2\kl{p}{q}}\left(\sqrt{1+\frac{\kappa}{1+\kappa}\left(1+\left(k-\frac{1}{2}\right)\alpha\right)}-1\right) \]
  By Equation~\ref{eq:hgaussianp}, $p^{\otimes n}(S_k) \le e^{-nt_k^2/2}$ whenever $(k-\frac{1}{2})\alpha > -1$.
  
  Also observe that, when $\kappa \le 0.01$, the function $\frac{1}{\kappa}(\sqrt{1+\frac{\kappa}{1+\kappa}(1+x)}-1)$ within the range $x \in [-1, 1.01]$ can be lower bounded by simply $\frac{1}{2}(1-2\kappa)(1+x)$.
  For the range $x \ge 1$, we can lower bound the function by $(1-2\kappa)\sqrt{x}$.
  This implies that $p^{\otimes n}(S_k)$ can be upper bounded by $e^{-\frac{n}{4}(1-O(\kappa))\kl{p}{q}(1+(k-\frac{1}{2})\alpha)^2} \le \delta^{(1-O(\kappa))(1+(k-\frac{1}{2})\alpha)^2}$ when $(k-\frac{1}{2})\alpha \in [-1, 1.01]$, and similarly, upper bounded by $e^{-n(1-O(\kappa))\kl{p}{q}(k-\frac{1}{2})\alpha} \le \delta^{4(1-O(\kappa))(k-\frac{1}{2})\alpha}$ when $(k-\frac{1}{2})\alpha \ge 1$.
  Finally, observe that for $(k-\frac{1}{2})\alpha \le -1$, we can trivially bound $p^{\otimes n}(S_k)$ by 1.
  
  We now bound $q^{\otimes n}(S_k)$ by
  \begin{align*}
      q^{\otimes n}(S_k) &\leq \Pr_q\left[\overline{\gamma} \leq \left(k+\frac{1}{2}\right) \alpha \kl{p}{q}\right]
  \end{align*}
  Solving the equation
  \[ -\frac{\kappa}{2}(t'_k)^2 - \sqrt{2(1+\kappa)\kl{p}{q}}\,t'_k +\kl{q}{p} = \left(k+\frac{1}{2}\right)\alpha\kl{p}{q} \]
  yields (by condition 2)
  \[ t'_k \ge \frac{\sqrt{1+\kappa}}{\kappa}\sqrt{2\kl{p}{q}}\left(\sqrt{1+\frac{\kappa}{1+\kappa}\left(1-\kappa-\left(k+\frac{1}{2}\right)\alpha\right)}-1\right) \]
  By Equation~\ref{eq:hgaussianq}, $q^{\otimes n}(S_k) \le e^{-n(t'_k)^2/2}$ whenever $(k+\frac{1}{2})\alpha < 1-\kappa$.
  
  We now bound $q^{\otimes n}(S_k)$ similar to how we bounded $p^{\otimes n}(S_k)$.
  When $\kappa \le 0.01$, the function $\frac{1}{\kappa}(\sqrt{1+\frac{\kappa}{1+\kappa}(1-\kappa+x)}-1)$ within the range $x \in [-1.01, 1-\kappa]$ can be lower bounded by simply $\frac{1}{2}(1-2\kappa)(1-\kappa-x)$.
  For the range $x \le -1$, we can lower bound the function by $(1-2\kappa)\sqrt{x}$.
  This implies that $q^{\otimes n}(S_k)$ can be upper bounded by $e^{-\frac{n}{4}(1-O(\kappa))\kl{p}{q}(1-\kappa-(k+\frac{1}{2})\alpha)^2} \le \delta^{(1-O(\kappa))(1-\kappa-(k+\frac{1}{2})\alpha)^2}$ when $(k+\frac{1}{2})\alpha \in [-1.01, 1-\kappa]$, and similarly, upper bounded by $e^{n(1-O(\kappa))\kl{p}{q}(k+\frac{1}{2})\alpha} \le \delta^{-4(1-O(\kappa))(k+\frac{1}{2})\alpha}$ when $(k+\frac{1}{2})\alpha \le -1$.
  Finally, observe that for $(k+\frac{1}{2})\alpha \ge 1-\kappa$, we can trivially bound $q^{\otimes n}(S_k)$ by 1.

  We are now ready to upper bound $\sum_{k \neq 0}\sqrt{p^{\otimes n}(S_k)q^{\otimes n}(S_k)}$.
  We decompose this sum into three regions of non-zero $k$.
  
  The main region $K_1$ is where $(k-\frac{1}{2})\alpha \ge -1$ and $(k+\frac{1}{2})\alpha \le 1-\kappa$.
  In this region, $\sqrt{p^{\otimes n}(S_k)q^{\otimes n}(S_k)}$ can be upper bounded by $\delta^{\frac{1-O(\kappa)}{2}\left[(1+(k-\frac{1}{2})\alpha)^2+(1-\kappa-(k+\frac{1}{2})\alpha)^2\right]} \le \delta^{(1-O(\kappa))(1+(|k|-\frac{1}{2})^2\alpha^2))}$.
  Now observe that $\sum_{k\in K_1}\delta^{(1-O(\kappa))(1+(|k|-\frac{1}{2})^2\alpha^2))} \lesssim \delta^{1-O(\kappa)+\Omega(\alpha)} \le \delta^{1+\Omega(\alpha)}$ as long as $\delta^{O(\alpha^2)} \ll 1$ and $\alpha = \Omega(\kappa)$.
  These two conditions are satisfied by the choice of $\alpha = \Omega(\max(\kappa, 1/\sqrt{n\kl{p}{q}}))$.
  
  The second region $K_2$ is where $(k-\frac{1}{2})\alpha \le -1$, which also means that $(k+\frac{1}{2})\alpha \le -(1+\Omega(\alpha))$.
  In this case, we use the bound $p^{\otimes n}(S_k) \le 1$ and $q^{\otimes n}(S_k) \le \delta^{-4(1-O(\kappa))(k+\frac{1}{2})\alpha}$.
  Thus, in this region, $\sqrt{p^{\otimes n}(S_k)q^{\otimes n}(S_k)}$ is upper bounded $\delta^{-2(1-O(\kappa))(k+\frac{1}{2})\alpha}$.
  This means that $\sum_{k \in K_2} \sqrt{p^{\otimes n}(S_k)q^{\otimes n}(S_k)} \lesssim \delta^{2(1-O(\kappa))(1-O(\alpha))} \ll \delta^{1+\Omega(\alpha)}$, as long as $\delta^\alpha \ll 1$ and as long as $\kappa \ll 1$ and $\alpha \ll 1$.
  This is again satisfied by our choice of $\alpha$.
  
  The last region $K_3$ is where $(k+\frac{1}{2})\alpha \ge 1-\kappa$, which also means that $(k-\frac{1}{2})\alpha \ge (1-O(\kappa)-O(\alpha))$.
  In this case, we use the bound $p^{\otimes n}(S_k) \le \delta^{4(1-O(\kappa))(k-\frac{1}{2})\alpha}$ and $q^{\otimes n}(S_k) \le 1$.
  Thus, in this region, $\sqrt{p^{\otimes n}(S_k)q^{\otimes n}(S_k)}$ is upper bounded $\delta^{2(1-O(\kappa))(k-\frac{1}{2})\alpha}$.
  This means that $\sum_{k \in K_3} \sqrt{p^{\otimes n}(S_k)q^{\otimes n}(S_k)} \lesssim \delta^{2(1-O(\kappa))(1-O(\alpha))} \ll \delta^{1+\Omega(\alpha)}$, as long as $\delta^\alpha \ll 1$ and as long as $\kappa \ll 1$ and $\alpha \ll 1$.
  This is again satisfied by our choice of $\alpha$.
  
  Summarizing, we have shown that $\sum_{k \ne 0} \sqrt{p^{\otimes n}(S_k)q^{\otimes n}(S_k)} \lesssim \delta^{1+\Omega(\alpha)}$ for $\alpha = \Omega(\max(\kappa, 1/\sqrt{n\kl{p}{q}}))$.
  Furthermore, as $\alpha \ge \Omega(\kl{p}{q})$ by construction, the above bound is much less than $(BC(p,q))^n$, since $(BC(p,q))^n \ge \delta^{1+O(\kappa)+O(\kl{p}{q})}$ from earlier in this proof.
  This yields that $BC_S(p^{\otimes n},q^{\otimes n}) \ge BC(p,q)^n - \sum_{k \ne 0}\sqrt{p^{\otimes n}(S_k)q^{\otimes n}(S_k)} \ge \delta^{1+O(\alpha)}$ since $\alpha = \Omega(\kl{p}{q})$.
  
  The last quantities we have to bound are $p^{\otimes n}(S_0)$ and $q^{\otimes n}(S_0)$.
  These were already bounded in the respective paragraphs bounding $p^{\otimes n}(S_k)$ and $q^{\otimes n}(S_k)$ for general $k$.
  When $k = 0$, the bounds are at most $\delta^{1+\Omega(\alpha)}$ (again, when $\alpha = \Omega(\kappa)$).
  Finally, we get that
  \[ 1-\DTV(p^{\otimes n},q^{\otimes n}) \ge \frac{(BC_S(p^{\otimes n},q^{\otimes n}))^2}{p^{\otimes n}(S)+q^{\otimes n}(S)} \gtrsim \frac{\delta^{2+O(\alpha)}}{\delta^{1+\Omega(\alpha)}} = \delta^{1+O(\alpha)}
  \]
  Expanding the definition of $\delta$ as well as the choice of $\alpha = \Theta(\max(\kappa, 1/\sqrt{n\kl{p}{q}}, \kl{p}{q}))$ gives the lemma statement.
\end{proof}

\subsection{Showing the conditions for Lemma~\ref{lem:newlb}}
\label{app:lbcalc}

In this subsection, we calculate the KL-divergence, squared Hellinger distance, as well as moment bounds for the log-likelihood ratio for $f_r$ and $f_r^{2\eps}$ for a generic $r$-smoothed distribution $f_r$.

\begin{lemma}\label{lem:kl-bound}
  	Consider the parametric family $f^{\lambda}_r(x) = f_r(x-\lambda)$ for some $r$-smoothed distribution $f_r$ with Fisher information $\I_r$.
	Then for $\eps<\frac{r}{4}$,
	\[ \kl{f_r}{f_r^{2\eps}}= 2\eps^2 \I_r \left(1+\Theta\left(\frac{\eps}{r^2\sqrt{\I_r}}\right)\,\right) \]
\end{lemma}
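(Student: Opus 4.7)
The plan is to reduce the KL computation to an integral of one-point score-function expectations, each of which is already controlled by Lemma~\ref{lem:grad_expectation_theta}. By the fundamental theorem of calculus applied to $\theta \mapsto \log f_r(x+\theta)$, I would write
\[ \log \frac{f_r(x)}{f_r(x-2\eps)} = \int_0^{2\eps} s_r(x-u)\,\d u. \]
Taking expectations with respect to $x \sim f_r$ and swapping the order of integration (justified by the sub-Gamma integrability of $s_r(x-u)$ from Lemma~\ref{lem:offcenter-score-moments}, which yields a bound on $\E[|s_r(x-u)|]$ uniform in $u \in [0, 2\eps]$), this gives
\[ \kl{f_r}{f_r^{2\eps}} = \int_0^{2\eps} \E_{x \sim f_r}[s_r(x-u)]\,\d u. \]

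Next I would invoke Lemma~\ref{lem:grad_expectation_theta} at shift $-u$, which is permitted since $u \in [0, 2\eps]$ and $2\eps < r/2$ by the hypothesis $\eps < r/4$. The lemma yields
\[ \E_{x \sim f_r}[s_r(x-u)] = \I_r\, u + O\!\left(\sqrt{\I_r}\,\frac{u^2}{r^2}\right), \]
and integrating over $u \in [0, 2\eps]$ gives
\[ \kl{f_r}{f_r^{2\eps}} = \I_r \cdot \frac{(2\eps)^2}{2} + O\!\left(\sqrt{\I_r}\,\frac{(2\eps)^3}{r^2}\right) = 2\eps^2 \I_r + O\!\left(\frac{\eps^3 \sqrt{\I_r}}{r^2}\right). \]
Factoring $2\eps^2 \I_r$ out of both terms produces the target expression $2\eps^2 \I_r \bigl(1 + O(\eps/(r^2\sqrt{\I_r}))\bigr)$, matching the claim.

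The main obstacle is essentially non-existent once Lemma~\ref{lem:grad_expectation_theta} is in hand: the only care needed is verifying the side condition $|u| \le r/2$ for that lemma (immediate from $\eps < r/4$) and checking the Fubini swap (immediate from the sub-Gamma integrability). A more naive direct Taylor expansion of $\log f_r(x-2\eps)$ in powers of $\eps$, by contrast, would require bounding a cubic remainder term of the form $\frac{2}{3}\eps^3 \E[s_r^3]$ along with a uniform tail control on higher-order derivatives of $\log f_r$; the FTC approach sidesteps this entirely by packaging all higher-order contributions into a single expectation that the paper has already analyzed.
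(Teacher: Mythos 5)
Your proposal is correct and follows essentially the same route as the paper: both write the log-likelihood ratio via the fundamental theorem of calculus as $\int_0^{2\eps}$ of the score at a shifted point, swap the order of integration, and apply Lemma~\ref{lem:grad_expectation_theta} (valid since the shift stays below $r/2$ when $\eps < r/4$) before integrating to get $2\eps^2\I_r + O(\eps^3\sqrt{\I_r}/r^2)$. The only cosmetic difference is that you state the error as $O(\cdot)$ while the paper writes $\Theta(\cdot)$, but this reflects the same bound from Lemma~\ref{lem:grad_expectation_theta}.
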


\begin{proof}
Let $\ell(x)=\log f_r(x)$
\begin{align*}
	\kl{f_r}{f_r^{2\eps}} &= -\int_{-\infty}^{\infty} f_r(x) \log \frac{f_r^{2\eps}(x)}{f_r(x)} \, \d x\\
	&=-\int_{-\infty}^{\infty} f_0(x) \int_{x}^{x+2\eps}\ell'(y)\,\d y\\
	&=-\int_{0}^{2\eps}\E_{z\from f_r}\left[s_r(z+y)\right]\,\d y\\
	&=\int_{0}^{2\eps}\I_r y + \Theta\left(\sqrt{\I_r} \frac{y^2}{r^2}\right)\,\d y\\
	&=2\eps^2 \I_r + \Theta\left(\sqrt{\I_r}\frac{\eps^3}{r^2}\right) = 2\eps^2 \I_r \left(1+\Theta\left(\frac{\eps}{r^2\sqrt{\I_r}}\right)\,\right)
	\end{align*}
where the $\Theta$ result is from Lemma~\ref{lem:grad_expectation_theta}.

  
\end{proof}

\begin{lemma}
\label{lem:h2bound}
	Consider the parametric family $f^{\lambda}_r(x) = f_r(x-\lambda)$ for some $r$-smoothed distribution $f_r$ with Fisher information $\I_r$.
	Then for $\eps\leq r$,
	\[ \DH^2(f_r, f_r^{2\eps})\leq  \frac{1}{4}\kl{f_r}{f_r^{2\eps}}+O(\frac{\eps^3}{r^3}) \]
\end{lemma}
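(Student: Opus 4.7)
The plan is to bound $\DH^2(f_r, f_r^{2\eps})$ directly by a Cauchy-Schwarz argument applied to $\sqrt{f_r}$, obtaining $\DH^2 \le \eps^2\I_r/2$, and then to compare this with $\tfrac{1}{4}\kl{f_r}{f_r^{2\eps}}$ using the explicit expansion from Lemma~\ref{lem:kl-bound}, absorbing the small discrepancy into the $O(\eps^3/r^3)$ slack via the bound $\I_r \le 1/r^2$ from Lemma~\ref{lem:IrBounded}.

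Concretely, I would first write $\DH^2(f_r, f_r^{2\eps}) = \tfrac{1}{2}\int(\psi(x) - \psi(x-2\eps))^2\,\d x$ with $\psi = \sqrt{f_r}$, and express $\psi(x) - \psi(x-2\eps) = \int_0^{2\eps}\psi'(x-t)\,\d t$ by the fundamental theorem of calculus. Applying Cauchy-Schwarz in the variable $t$ gives $(\psi(x) - \psi(x-2\eps))^2 \le 2\eps \int_0^{2\eps}\psi'(x-t)^2\,\d t$. Integrating over $x$, swapping the order of integration by Fubini (non-negative integrand), and using translation invariance together with the identity $\int\psi'(x)^2\,\d x = \tfrac{1}{4}\int \tfrac{(f_r'(x))^2}{f_r(x)}\,\d x = \I_r/4$ then yields $\int(\psi - \psi^{2\eps})^2\,\d x \le 4\eps^2\cdot \I_r/4 = \eps^2\I_r$, i.e., $\DH^2(f_r, f_r^{2\eps}) \le \eps^2\I_r/2$.

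Next, Lemma~\ref{lem:kl-bound} gives $\tfrac{1}{4}\kl{f_r}{f_r^{2\eps}} = \eps^2\I_r/2 + \Theta(\sqrt{\I_r}\,\eps^3/r^2)$, and Lemma~\ref{lem:IrBounded} gives $\sqrt{\I_r} \le 1/r$, so the error term has absolute value $O(\eps^3/r^3)$. Regardless of its sign, this implies $\eps^2\I_r/2 \le \tfrac{1}{4}\kl{f_r}{f_r^{2\eps}} + O(\eps^3/r^3)$, and combining with the Cauchy-Schwarz bound gives the lemma.

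The main potential concern is whether Cauchy-Schwarz is tight enough not to inflate the leading $\eps^2\I_r/2$ term beyond the $O(\eps^3/r^3)$ budget. A quick Taylor check shows $\psi'(x-t) = \psi'(x) + O(t\,\psi''(x))$ over $t \in [0,2\eps]$, so the slack in Cauchy-Schwarz is $O(\eps^4)$ pointwise and even smaller after integration against $x$ (the $\eps^3$ cross term integrates to zero since $\int\psi'\psi'' = 0$). This is comfortably within our budget, and no boundary or integrability issues arise because $f_r$ is smooth and strictly positive as a Gaussian convolution, with $\I_r$ finite by Lemma~\ref{lem:IrBounded}.
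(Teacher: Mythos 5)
Your argument is correct in substance but follows a genuinely different route from the paper. The paper never bounds the Hellinger distance through the Fisher information at all: it uses the pointwise inequality $(1-\sqrt{y})^2\leq -\tfrac{1}{2}\log y+\tfrac{y-1}{2}+\tfrac{1}{24}(y-1)_+^3$ with $y=f_r^{2\eps}(x)/f_r(x)$, so that after integrating against $f_r$ the logarithmic term produces $\tfrac14\kl{f_r}{f_r^{2\eps}}$ \emph{exactly}, the linear terms cancel because both densities integrate to one, and the cubic remainder is bounded by $O(\eps^3/r^3)$ via a convexity argument that reduces the $r$-smoothed distribution (a mixture of width-$r$ Gaussians) to the single-Gaussian case. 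You instead prove the classical bound $\DH^2(f_r,f_r^{2\eps})\leq \eps^2\I_r/2$ by the fundamental theorem of calculus and Cauchy--Schwarz applied to $\sqrt{f_r}$ (valid here since $f_r$ is a smooth, strictly positive Gaussian convolution), and then compare with the expansion $\tfrac14\kl{f_r}{f_r^{2\eps}}=\tfrac{\eps^2\I_r}{2}+\Theta(\sqrt{\I_r}\,\eps^3/r^2)$ from Lemma~\ref{lem:kl-bound}, absorbing the error via $\I_r\le 1/r^2$; your handling of the unknown sign of the $\Theta$ term is fine, and your closing ``tightness of Cauchy--Schwarz'' worry is actually moot, since any slack only loosens an upper bound on $\DH^2$ that is already within budget. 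Your route is arguably more modular (it reuses Lemma~\ref{lem:kl-bound} and standard facts), while the paper's route is self-contained, gives the KL term with no error from a separate expansion, and directly covers the whole stated range of $\eps$.

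One small gap to patch: the lemma is claimed for $\eps\leq r$, but Lemma~\ref{lem:kl-bound} is only stated for $\eps<r/4$, so as written your comparison step covers only that range. This is easily repaired: for $\eps\geq r/4$ your own bound gives $\DH^2(f_r,f_r^{2\eps})\leq \eps^2\I_r/2\leq \eps^2/(2r^2)\leq 2\eps^3/r^3=O(\eps^3/r^3)$, so the claimed inequality holds trivially since the KL term is nonnegative; you should state this case explicitly.
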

\begin{proof}
  	Observe that the squared Hellinger distance is
	\begin{align*}
	\DH^2(f_r, f_r^{2\eps}) &= \frac{1}{2}\int_{-\infty}^{\infty} (\sqrt{f_r(x)} - \sqrt{f_r^{2\eps}(x)})^2 \, \d x\\
	&= \frac{1}{2}\int_{-\infty}^{\infty} f_r(x) (1 - \sqrt{\frac{f_r^{2\eps}(x)}{f_r(x)}})^2 \, \d x
	\end{align*}
Since for $y>0$ we have $(1-\sqrt{y})^2\leq -\frac{1}{2}(\log y) + \frac{y-1}{2}+\frac{1}{24}(y-1)_+^3$, substituting $y=\frac{f_r^{2\eps}(x)}{f_r(x)}$, the previous expression is at most
\[\frac{1}{2}\int_{-\infty}^{\infty} f_r(x) \left(-\frac{1}{2}\log \frac{f_r^{2\eps}(x)}{f_r(x)} +\frac{1}{2}\frac{f_r^{2\eps}(x)}{f_r(x)}-\frac{1}{2}+\frac{1}{24}\left(\frac{f_r^{2\eps}(x)}{f_r(x)}-1\right)_+^3\right) \, dx\]
The first term inside the big parentheses equals $\frac{1}{4}\kl{f_r}{f_r^{2\eps}}$
; the next two terms cancel out (since they each integrate all the probability mass of $f$); the final, cubic, term we bound now.

We start by bounding the cubic term for a Gaussian $g$ of standard deviation $r$:\[\int_{-\infty}^{\infty} g(x) \left(\frac{g(x+2\eps)}{g(x)}-1\right)_+^3 \, \d x=O(\frac{\eps^3}{r^3})\]
where the bound is easily computed from the closed form evaluation of the integral, valid while $\eps$ is bounded by some fixed multiple of $r$.

Now the $r$-smoothed distribution $f$ is just a convex combination of Gaussians of width $r$, and the desired inequality follows from the observation that the expression $f_r(x)\left(\frac{f_r^{2\eps}(x)}{f_r(x)}-1\right)_+^3$ is convex in the sense that, in terms of $y,z>0$, the function $y\left(\frac{z}{y}-1\right)_+^3 $ is a convex 2-variable function. Thus the total contribution of the  cubic term is bounded by its value for the Gaussian, namely $O(\frac{\eps^3}{r^3})$.

\end{proof}

\begin{lemma}
\label{lem:gammakthmoment}
Let $k \ge 3$.
  For an $r$-smoothed distribution $f_r$, let $\gamma = \log \frac{f_r^{2\eps}}{f_r}$.
  For $\eps\leq r$, we have \[\E_p[|\gamma|^k]\leq\frac{k!}{2}(30\eps/r)^{k-2} 4\eps^2 \I \left( 1 + O\left( \frac{\eps}{r} \sqrt{\log \frac{1}{r^2\I}}\right)\right)\]
\end{lemma}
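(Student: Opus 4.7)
}
The plan is to reduce moments of $\gamma$ to moments of the score function $s_r$ and then invoke the sub-Gamma tail control from Lemma~\ref{lem:offcenter-score-moments} together with the variance bound from Lemma~\ref{lem:var_close_to_fisher}. The starting observation is that, by the fundamental theorem of calculus,
\[
  \gamma(x) \;=\; \log f_r(x-2\eps) - \log f_r(x) \;=\; -\int_0^{2\eps} s_r(x-t)\,\d t,
\]
so pointwise $|\gamma(x)|^k \le (2\eps)^{k-1}\int_0^{2\eps} |s_r(x-t)|^k\,\d t$ by Jensen's inequality applied to the convex map $u\mapsto u^k$ (for $k\ge 1$) on the normalized measure $(2\eps)^{-1}\d t$.

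Taking expectation under $x\sim f_r$ and swapping integrals gives
\[
  \E_{p}[|\gamma|^k] \;\le\; (2\eps)^{k-1} \int_0^{2\eps} \E_{x\sim f_r}\bigl[|s_r(x-t)|^k\bigr]\,\d t.
\]
For each $t\in[0,2\eps]$ with $|t|\le r/2$ (which holds in the regime $\eps\le r/4$, covering the parameter range ultimately used in Lemma~\ref{lem:newlb}), Lemma~\ref{lem:offcenter-score-moments} yields
\[
  \E_{x\sim f_r}\bigl[|s_r(x-t)|^k\bigr] \;\le\; \frac{k!}{2}\,(15/r)^{k-2}\max\!\bigl(\E_x[s_r^2(x-t)],\,\I_r\bigr),
\]
and Lemma~\ref{lem:var_close_to_fisher} controls the $\max$ by $\I_r\bigl(1+O(\tfrac{|t|}{r}\sqrt{\log\tfrac{1}{r^2\I_r}})\bigr)$.

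Integrating over $t\in[0,2\eps]$, the additive $O(\tfrac{|t|}{r}\sqrt{\log\tfrac{1}{r^2\I_r}})$ term averages to $O(\tfrac{\eps}{r}\sqrt{\log\tfrac{1}{r^2\I_r}})$, yielding
\[
  \E_p[|\gamma|^k] \;\le\; (2\eps)^{k-1}\cdot 2\eps\cdot \frac{k!}{2}\,(15/r)^{k-2}\,\I_r\!\left(1+O\!\left(\tfrac{\eps}{r}\sqrt{\log\tfrac{1}{r^2\I_r}}\right)\right).
\]
Factoring $(2\eps)^k = (2\eps)^{k-2}(2\eps)^2$ and combining with $(15/r)^{k-2}$ produces the claimed $(30\eps/r)^{k-2}\cdot 4\eps^2\I_r$ prefactor, finishing the bound.

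The main subtlety I expect is keeping the multiplicative ``$1+o(1)$'' factor clean while taking the $k$-th moment: the corrections from Lemma~\ref{lem:var_close_to_fisher} are additive in $t/r$, so they must be integrated rather than simply pulled out, and one must verify that the resulting $O(\eps/r\cdot\sqrt{\log 1/(r^2\I_r)})$ is in fact uniform in $k$ (it is, because it enters only through the $\max(\cdot,\I_r)$ factor, not the $(15/r)^{k-2}$ term). A secondary point to watch is the applicability range: the sub-Gamma tail and variance lemmas both require $|t|\le r/2$, so the argument as stated is cleanest for $\eps\le r/4$, which suffices for all downstream uses in the lower bound proof.
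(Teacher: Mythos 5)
Your proposal is correct and follows essentially the same route as the paper's proof: write $\gamma$ as an integral of the score over an interval of length $2\eps$, apply Jensen/convexity to pull the $k$-th power inside, then invoke Lemma~\ref{lem:offcenter-score-moments} together with Lemma~\ref{lem:var_close_to_fisher} and integrate over the shift. Your remark about the effective range $\eps \le r/4$ (so that the shift stays within $r/2$) is a fair observation that the paper leaves implicit, and it is indeed harmless for the downstream use in the lower bound.
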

\begin{proof}

  Let $\ell(x)=\log f_r(x)$. We have
  \begin{align*}
      \int_{-\infty}^{\infty} f_r(x)\left|\log f_r(x)-\log f_r(x+2\eps)\right|^k&=\int_{-\infty}^{\infty}p(x)\left|\int_{x}^{x+2\eps}\ell'(y)\,\d y\right|^k\,\d x\\
      &\leq \eps^{k-1}\int_{-\infty}^{\infty}p(x)\int_{x}^{x+2\eps}|\ell'(y)|^k\,\d y\,\d x\\
      &= (2\eps)^{k-1}\int_{0}^\eps \E_x[\abs{s_r(x+y)}^k] \, \d y\\
      &\le (2\eps)^{k-1}\frac{k!}{2}(15/r)^{k-2} \int_0^{2\eps} \E_x[s_r(x+y)^2] \, \d y\\
      &\le (2\eps)^{k-1}\frac{k!}{2}(15/r)^{k-2} \int_0^{2\eps} \I + O\left( \frac{y}{r} \I \sqrt{\log \frac{1}{r^2\I}}\right)\, \d y\\
      &= (2\eps)^{k}\frac{k!}{2}(15/r)^{k-2} \I\left( 1 + O\left( \frac{\eps}{r} \sqrt{\log \frac{1}{r^2\I}}\right)\right)\\
      &= \frac{k!}{2}(30\eps/r)^{k-2} 4\eps^2 \I \left( 1 + O\left( \frac{\eps}{r} \sqrt{\log \frac{1}{r^2\I}}\right)\right)
  \end{align*}
where the first three inequalities are by convexity, by Lemma~\ref{lem:offcenter-score-moments}, and by Lemma~\ref{lem:var_close_to_fisher}.
\end{proof}

\begin{lemma}
\label{lem:gamma2ndmoment}
  For an $r$-smoothed distribution $f_r$, let $\gamma = \log \frac{f_r^{2\eps}}{f_r}$.
  For $\eps\leq r$, we have \[\E_{f_r}[|\gamma|^2]\leq4\eps^2 \I \left( 1 + O\left( \frac{\eps}{r} \sqrt{\log \frac{1}{r^2\I}}\right)\right)\]
\end{lemma}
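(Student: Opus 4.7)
The plan is to mirror the proof of Lemma~\ref{lem:gammakthmoment} specialized to $k=2$, but replacing the invocation of Lemma~\ref{lem:offcenter-score-moments} (which would introduce a spurious $(15/r)^{k-2}$ factor that is $1$ at $k=2$) by a direct application of Lemma~\ref{lem:var_close_to_fisher}. Concretely, I would start by writing $\gamma(x) = \log f_r(x-2\eps) - \log f_r(x)$ as a line integral of the score, $\gamma(x) = -\int_{x-2\eps}^{x} s_r(y)\,\d y$, so that
\[
|\gamma(x)|^2 \;=\; \Big(\int_{x-2\eps}^{x} s_r(y)\,\d y\Big)^2 \;\le\; 2\eps \int_{x-2\eps}^{x} s_r(y)^2\,\d y,
\]
by Cauchy--Schwarz (the $k=2$ case of the convexity bound used in Lemma~\ref{lem:gammakthmoment}).

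Next, I would integrate against $f_r(x)$, swap the order of integration via Fubini, and change variables (e.g.\ $y = x - u$ with $u \in [0,2\eps]$) to rewrite
\[
\E_{x \sim f_r}[|\gamma(x)|^2] \;\le\; 2\eps \int_{0}^{2\eps} \E_{x\sim f_r}\!\left[s_r(x-u)^2\right] \d u,
\]
so that the inner expectation is exactly of the form bounded by Lemma~\ref{lem:var_close_to_fisher}, which gives $\E_{x\sim f_r}[s_r(x-u)^2] \le \I_r + O\!\left(\frac{u}{r}\I_r \sqrt{\log \frac{1}{r^2 \I_r}}\right)$ for $|u| \le r/2$. (For the regime where $u$ approaches $r$, one can either note the implicit looseness in the statement's $\eps \le r$ hypothesis inherited from Lemma~\ref{lem:gammakthmoment}, or simply tighten to $\eps \le r/4$; either way the bound is absorbed in the $O(\cdot)$ term.)

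Finally, integrating the pointwise bound over $u \in [0,2\eps]$ yields
\[
\E_{f_r}[|\gamma|^2] \;\le\; 2\eps \cdot \Big(2\eps \I_r + O\big(\tfrac{\eps^2}{r}\I_r \sqrt{\log \tfrac{1}{r^2\I_r}}\big)\Big) \;=\; 4\eps^2 \I_r \Big(1 + O\big(\tfrac{\eps}{r}\sqrt{\log \tfrac{1}{r^2\I_r}}\big)\Big),
\]
which is the claimed bound. The proof is essentially a routine calculation with no conceptual obstacle: the only mildly delicate point is the bookkeeping needed to match the change-of-variables with the hypothesis $|\eps| \le r/2$ in Lemma~\ref{lem:var_close_to_fisher}, but this is exactly the same minor slack already present in Lemma~\ref{lem:gammakthmoment}, so no new technique is needed.
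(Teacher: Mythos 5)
Your proposal is correct and follows essentially the same route as the paper's proof: write $\gamma$ as a line integral of the score, apply Cauchy--Schwarz to get $2\eps\int_0^{2\eps}\E_x[s_r(x+y)^2]\,\d y$, and invoke Lemma~\ref{lem:var_close_to_fisher} before integrating. The slack you flag between the stated hypothesis $\eps\le r$ and the $|\eps|\le r/2$ requirement of Lemma~\ref{lem:var_close_to_fisher} is indeed present (and silently glossed over) in the paper's own argument as well, and is harmless in the regime where the lemma is used.
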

\begin{proof}
  Let $\ell(x)=\log f_r(x)$. We have \begin{align*}
      \int_{-\infty}^{\infty} f_r(x)\left|\log f_r(x)-\log f_r(x+2\eps)\right|^2&=\int_{-\infty}^{\infty}f_r(x)\left|\int_{x}^{x+2\eps}\ell'(y)\,\d y\right|^2\,\d x\\
      &\leq \eps\int_{-\infty}^{\infty}f_r(x)\int_{x}^{x+\eps}|\ell'(y)|^2\,\d y\,\d x\\
      &= 2\eps \int_0^{2\eps} \E_x[s_r(x+y)^2] \, \d y\\
      &\le 2\eps \int_0^{2\eps} \I + O\left( \frac{y}{r} \I \sqrt{\log \frac{1}{r^2\I}}\right)\, \d y\\
      &= 4\eps^{2} \I\left( 1 + O\left( \frac{\eps}{r} \sqrt{\log \frac{1}{r^2\I}}\right)\right)
  \end{align*}
where the first two inequalities are by convexity, and by Lemma~\ref{lem:var_close_to_fisher}.
\end{proof}

\subsection{Proving Theorem~\ref{thm:lowerbound}}
\label{app:lbwrap}

We are ready to prove Theorem~\ref{thm:lowerbound}.

\begin{proof}[Proof of Theorem~\ref{thm:lowerbound}]
We will be applying Lemma~\ref{lem:newlb} on the distributions $f_r$ and $f_r^{2\eps}$ for an appropriately chosen $\eps$, with $\kappa = O(\frac{\eps}{r}\frac{1}{r^2\I_r})$ (note that by Lemma~\ref{lem:IrBounded}, $\I_r \le 1/r^2$ so $r^2\I_r \le 1$).

Lemma~\ref{lem:h2bound} combined with Lemma~\ref{lem:kl-bound} show condition 1 on Lemma~\ref{lem:newlb}.
Lemma~\ref{lem:kl-bound} shows condition 2.
Lemma~\ref{lem:gamma2ndmoment} shows condition 3, and an essentially identical calculation shows condition 4.
Lemma~\ref{lem:gammakthmoment} shows condition 5, and again an essentially identical calculation shows condition 6.

Thus, applying Lemma~\ref{lem:newlb} and Fact~\ref{fact:twopoint}, the failure probability of distinguishing $p = f_r$ and $q = f_r^{2\eps}$ is at least
\[ e^{-(1 + O(\frac{\eps}{r}\frac{1}{r^2\I_r})+O(1/\sqrt{n \eps^2\I_r}) + O(\eps^2\I_r) )n \eps^2\I_r/2} \]

Picking \[ \eps = \left(1-O\left(\sqrt{\frac{\log\frac{1}{\delta}}{n}}\frac{1}{r^3\I_r^{1.5}}\right)-O\left(\frac{1}{\log\frac{1}{\delta}}\right) - O\left(\frac{\log\frac{1}{\delta}}{n}\right)\right)\sqrt{\frac{2\log\frac{1}{\delta}}{n\I_r}}\]
yields a failure probability lower bound of $\delta$, thus showing the theorem statement.
\end{proof}





\end{document}